\documentclass[12pt, a4paper]{article}

\ifdefined\pdfmapfile
  \IfFileExists{pdftex.map}{}{%
    \pdfmapfile{cm.map}
    \pdfmapfile{+symbols.map}
    \pdfmapfile{+latxfont.map}
    \pdfmapfile{+rsfs.map}%
  }
\fi

\usepackage{latexsym}
\usepackage{booktabs}
\usepackage{caption}
\usepackage{mathrsfs}
\usepackage{amsmath}
\usepackage{amsfonts,amsthm,amssymb,mathrsfs}
\usepackage{float}
\usepackage{graphicx}
\usepackage{enumerate}
\usepackage{tikz}
\usepackage{caption}
\usepackage{rotating}
\allowdisplaybreaks[4]
\usepackage{tabularx}
\usepackage{cite}
\newtheorem{thm}{Theorem}[section]
\newtheorem{lem}[thm]{Lemma}

\addtocounter{section}{0}

\theoremstyle{definition}

\newtheorem{remark}{Remark}[section]

\newtheorem{const}{Construction}[section]

\newtheorem{conj}{Conjecture}[section]

\usepackage{url}

\usepackage[unicode={true},colorlinks,
linkcolor=black,
anchorcolor=blue,
citecolor=green]{hyperref}

\usepackage{authblk}

\usepackage{color}

\definecolor{VeryLightBlue}{rgb}{0.9,0.9,1}
\definecolor{LightBlue}{rgb}{0.8,0.8,1}
\definecolor{MidBlue}{rgb}{0.5,0.5,1}
\definecolor{DarkBlue}{rgb}{0,0,0.6}
\definecolor{Blue}{rgb}{0,0,1}
\definecolor{Gold}{rgb}{1,0.843,0}
\definecolor{LightGreen}{rgb}{0.88,1,0.88}
\definecolor{MidGreen}{rgb}{0.6,1,0.6}
\definecolor{DarkGreen}{rgb}{0,0.6,0}
\definecolor{VeryLightYellow}{rgb}{1,1,0.9}
\definecolor{LightYellow}{rgb}{1,1,0.6}
\definecolor{MidYellow}{rgb}{1,1,0.5}
\definecolor{DarkYellow}{rgb}{1,1,0.2}
\definecolor{DarkPurple}{rgb}{.6,0,1}
\definecolor{Red}{rgb}{1,0,0}
\definecolor{VeryLightRed}{rgb}{1,0.9,0.9}
\definecolor{LightRed}{rgb}{1,0.8,0.8}
\definecolor{MidRed}{rgb}{1,0.55,0.55}

\long\def\delete#1{}

\begin{document}

\title{Extremal spectral gap of regular graphs with bounded vertex connectivity}

\author[1]{Yu Wang\thanks{Corresponding author.}}
\author[2]{Sanming Zhou}

\affil[1]{\small College of Mathematics and System Science, Xinjiang University, Urumqi 830017, P. R. China}
\affil[2]{\small School of Mathematics and Statistics, The University of Melbourne, Parkville, VIC 3010, Australia}
 
\date{}

\openup 0.2\jot 
\maketitle

\begingroup
\renewcommand{\thefootnote}{}
\begin{NoHyper}
\footnotetext{E-mail addresses: yuwang980212@163.com (Y. Wang), sanming@unimelb.edu.au (S. Zhou)}
\end{NoHyper}
\endgroup

\begin{abstract}
	
The well-known inequality of Fiedler ensures that the spectral gap $r-\lambda_2(G)$ of a connected non-complete $r$-regular graph $G$ is bounded from above by the vertex connectivity $\kappa(G)$ of $G$. We prove that, for integers $t\geq2$ and $r>2t^2$, every connected $r$-regular graph with vertex connectivity at most $2t$ has spectral gap at most
$\frac{1}{2}(r+t+2-\sqrt{(r-t+2)^2-4t(t-1)})$. We show that this bound is nearly optimal for each fixed $t \ge 2$ and sufficiently large $r > 2t^2$ such that $r+1$ is divisible by $t$. We further prove that if $r>(2t-1)(2t-2)$, then every connected $r$-regular graph whose vertex connectivity is odd and no more than $2t-1$ has spectral gap at most $r-\min\bigl\{\xi(r,t),\nu(r,t)\bigr\}$ for some explicitly given functions $\xi(r,t)$ and $\nu(r,t)$. If $r\geq6t^2$, then this minimum equals
$\xi(r,t)=\frac{r(2r-4t+5)}{2(r-t+2)}$.
In particular, when $\kappa(G)=2t-1$, this gives
$r-\lambda_2(G)<t=(\kappa(G)+1)/2$,
which improves Fiedler's bound by nearly a factor of two. Our upper bounds in their parameter ranges improve two known bounds on the spectral gap of regular graphs. 

\smallskip
\textit{Keywords:} spectral gap, vertex connectivity,
second largest eigenvalue, equitable partition

\smallskip
\textit{Mathematics Subject Classification (2020):} 05C50, 05C40

\end{abstract}

\section{Introduction}
\label{sec:int}

Throughout this paper, all graphs are finite, simple and undirected. We write $V(G)$ and $E(G)$ for the vertex and edge sets of a graph $G$, respectively. For $S,\ T\subseteq V(G)$, we use $G[S]$ to denote the subgraph of $G$ induced by $S$, $G-S$ the graph obtained from $G$ by deleting $S$, and $[S,T]$ the set of edges of $G$ with one end in $S$ and the other in $T$. We write $\overline{S}=V(G)\setminus S$ and denote by $\overline{G}$ the complement of $G$. For graphs $G$ and $H$ on disjoint vertex sets, let $G\cup H$ denote their union and $G\vee H$ their join, the latter being obtained from $G\cup H$ by adding all edges between $V(G)$ and $V(H)$. For a positive integer $k$, let $kG$ denote the disjoint union of $k$ copies of $G$. The \emph{vertex connectivity} $\kappa(G)$ is the minimum number of vertices whose deletion disconnects $G$ or leaves a single vertex. We denote the edge connectivity and minimum degree of $G$ by $\kappa'(G)$ and $\delta(G)$, respectively. It is well known that $\kappa(G)\leq\kappa'(G)\leq\delta(G)$.

Let $A(G)$ be the adjacency matrix of $G$. The eigenvalues of $A(G)$, denoted by $\lambda_1(G)\geq\lambda_2(G)\geq\cdots\geq\lambda_n(G)$, are called the eigenvalues of $G$, where $n=|V(G)|$. Thus $\lambda_i(G)$ is the $i$th largest eigenvalue of $G$. The eigenvalues of the Laplacian matrix $L(G)=D(G)-A(G)$ of $G$, written $\mu_1(G)\leq\mu_2(G)\leq\cdots\leq\mu_n(G)$, are called the Laplacian eigenvalues of $G$, where $D(G)$ is the diagonal matrix of vertex degrees of $G$. It is known that $\mu_1(G)=0$, and $\mu_2(G)>0$ when $G$ is connected. In 1973, Fiedler~\cite{fiedler01} proved that $\mu_2(G)\leq\kappa(G)$ for every non-complete graph $G$, and he called $\mu_2(G)$ the algebraic connectivity of $G$. If $G$ is $r$-regular, then $\lambda_1(G)=r$ and $\mu_i(G)=r-\lambda_i(G)$ for $1\leq i\leq n$. In this case the algebraic connectivity $\mu_2(G)$ is precisely the \emph{spectral gap} $r-\lambda_2(G)$ of $G$, and Fiedler's inequality becomes $r-\lambda_2(G)\leq\kappa(G)$.

It is well known that the spectral gap governs the expansion of graphs. Specifically, for every connected $r$-regular graph $G$, we have $\frac{r-\lambda_2(G)}{2}\leq h(G)\leq\sqrt{2r(r-\lambda_2(G))}$~\cite{AM,M}, where $h(G)= \min \{|[S,\overline{S}]|/|S|: \emptyset \neq S\subseteq V(G), |S|\leq |V(G)|/2\}$ is the isoperimetric number of $G$. These famous bounds suggest the extremal problem of finding the maximum or minimum spectral gap of graphs in a given family of connected $r$-regular graphs (see \cite[Problems 1.1 and 1.2]{essential}), say, the family of connected $r$-regular graphs with bounded vertex connectivity, the family of connected $r$-regular graphs with bounded edge connectivity, etc. A related problem is to find the best upper or lower bound for the spectral gap (equivalently, the best lower or upper bound for the second largest eigenvalue) of graphs in a given family of regular graphs. In the case of bounded vertex or edge connectivity, this problem was asked by Mohar and O (private communication), hinted in \cite{cioaba10, cioabagu16}, and formally raised by Abiad \textit{et al.} \cite[Question 1.12]{Abiad01}: For an $r$-regular graph $G$ of a given order and for $1 \le t \le r-1$, what is the best upper bound for $\lambda_2(G)$ which guarantees that $\kappa(G)\geq t+1$ or $\kappa'(G)\geq t+1$? The problem of minimising the spectral gap among connected regular graphs with a given order is closely related to the Aldous--Fill conjecture on the maximum relaxation time of random walks (see \cite{Aldous02,Abdi02,Abdi03,deAbreu01,hoory90}). 

In this paper we focus on the best lower bound for the second largest eigenvalue of $r$-regular graphs with bounded vertex connectivity. Before stating our results, let us review briefly related known results. A series of increasingly strong lower bounds on $\lambda_2(G)$ for $r$-regular graphs $G$ with $\kappa'(G)\leq t$ were obtained in~\cite{Abiad01,kirkland,Chandran01,krivelevich,cioaba10,O90}, culminated with the following result due to O \textit{et al.}~\cite{O90}: For integers $r$ and $t$ with $3\leq t\leq r-1$, every connected $r$-regular graph $G$ with $\kappa'(G)\leq t$ satisfies
$$
\lambda_2(G)\geq\begin{cases}
\frac{r-4+\sqrt{(r+4)^2-8t}}{2}, & \text{ if $t$ is odd}\\[2pt]
\frac{r-3+\sqrt{(r+3)^2-8t}}{2}, & \text{ if $t$ is even,}
\end{cases}
$$
and moreover this bound is sharp. The counterpart best lower bound for the essential edge connectivity was recently obtained by the present authors in~\cite{essential}.

In the case of vertex connectivity, the problem is significantly more difficult and largely open, though the case when $\kappa(G) \leq 1$ or $\kappa(G) \leq 2$ has been settled. In \cite[Theorem 1.8]{O10}, O proved that if $G$ is an $r$-regular graph on at least three vertices such that $\kappa(G) \leq 1$, then $\lambda_2(G) \geq \frac{3r}{4}$. In~\cite[Corollary 1.3]{cioabagu16}, Cioab\u{a} and Gu proved that if $G$ is a connected $r$-regular graph with $r \geq 3$ such that $\kappa(G) \leq 1$, then 
$$
\lambda_2(G) \geq 
\begin{cases}
		\frac{r-2+\sqrt{r^2 + 12}}{2}, & \text{ if $r$ is even}\\[2pt]
		\frac{r-2+\sqrt{r^2+8}}{2}, & \text{ if $r$ is odd.}
	\end{cases}
$$
In~\cite[Theorem 1.13]{Abiad01}, Abiad \textit{et al.} proved that if $G$ is an $r$-regular graph with order $n \geq 5$ and $r \geq 3$ such that $\kappa(G) \leq 1$, then $\lambda_2(G) \geq \frac{8n-25}{9n-25}r$. In \cite[Theorem 1.1]{zhang23}, Zhang and Wang obtained a sharp upper bound on $\lambda_2(G)$ for any connected $r$-regular graph $G$ with $r \geq 3$ that guarantees $\kappa(G) \geq 2$, settling the case when $\kappa(G)=1$. Their result is too complicated to be presented here. Settling the case when $\kappa(G)=2$, in \cite[Theorem 3.2]{zhang03}, Zhang proved that if $G$ is a connected $r$-regular graph with $r \geq 4$ such that $\kappa(G) \leq 2$, then $\lambda_2(G) \geq \lambda_2(H_{r, \lfloor r/2 \rfloor})$, and this bound is attained by $H_{r, \lfloor r/2 \rfloor}$, which is the $r$-regular graph with order $2r+2$ obtained from two vertex-disjoint copies $G_1, G_2$ of $K_r$ and two additional vertices $v_1, v_2$ by adding edges between $v_1$ and $\lfloor r/2 \rfloor$ vertices in $G_1$ and $r - \lfloor r/2 \rfloor$ vertices in $G_2$ and then adding edges between $v_2$ and all vertices in $V(G_1) \cup V(G_2)$ not adjacent to $v_1$. In \cite[Theorem 2.2]{zhang03}, Zhang also proved that if $G$ is a connected $r$-regular graph with $r \geq 3$ such that $\kappa(G) \leq t$ for some integer $t$ with $1 \le t \le r-1$, then
\begin{equation}
\label{eq:zhang}
\lambda_2(G) \geq 
\begin{cases}
r-\frac{t(r+2)}{2(r+3-t)}, & \text{ if $1 \le t < \frac{r}{2} +1$}\\[2pt]
r - t, & \text{ if $\frac{r}{2}+1 \le t \le r-1$.}
\end{cases}
\end{equation}
In \cite[Theorem 1.7]{Liu2019}, Liu \textit{et al.} gave a lower bound on $\lambda_2(G)$ for any $r$-regular graph $G$ such that $\kappa(G) \leq t$ for some integer $t$ with $1 \le t \le r-1$. In \cite[Theorem 4.1]{Zhang2020}, Zhang proved that if $G$ is a connected $r$-regular graph with $r \geq 3$ such that $\kappa(G) \leq t$ for some integer $t$ with $1 \le t \le \frac{r+2}{2}$, then 
\begin{equation}
\label{eq:zhang1}
\lambda_2(G) > r - \frac{tr}{2(r+1-t)}.
\end{equation}

Now let us move on to our own results in this paper. We consider the case when the upper bound on $\kappa(G)$ is even or odd separately. For positive integers $r$ and $l$ such that $r \geq 3l - 2$, let
\begin{equation}\label{eq:rho}
\begin{aligned}
\rho(r,l)=\frac{r-l-2+\sqrt{(r-l+2)^2-4l(l-1)}}{2}.
\end{aligned}
\end{equation}
Our first main result handles the case when the upper bound on $\kappa(G)$ is even and relatively small compared with $r$. 

\begin{thm}\label{thm:1.1} 
Let $t\geq2$ and $r>2t^2$ be integers. Then every connected $r$-regular graph $G$ with $\kappa(G)\leq2t$ satisfies $\lambda_2(G)\geq\rho(r,t)$.
\end{thm}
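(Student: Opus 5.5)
Let $S$ be a minimum vertex cut with $|S|=s=\kappa(G)\le 2t$, and let $G-S$ split as $A\cup B$, where $A$ and $B$ are nonempty unions of components with no edges between them. We may assume $G$ is not complete, since otherwise $\kappa(G)=r>2t$. The proof will use interlacing and Rayleigh quotients. By Courant--Fischer,
\[
\lambda_2(G)\ \ge\ \min_{x\in U\setminus\{0\}} \frac{x^{\top}A(G)x}{x^{\top}x}
\]
for any $2$-dimensional subspace $U$. A good choice of $U$ is spanned by two vectors, one supported on $A$ together with part of $S$, and one supported on $B$ together with the rest of $S$. Equivalently, we can take a partition of $V(G)$ into parts $X_1\supseteq A$, $X_2\supseteq B$ (splitting $S$ between them), form the $2\times 2$ or $3\times 3$ quotient matrix, and use quotient-matrix interlacing, which gives $\lambda_2(G)\ge\lambda_2(Q)$. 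The target $\rho(r,t)$ is the larger root of
\[
x^2-(r-t-2)x-(r-1)(t-1)...
\]
More concretely, $\rho(r,t)$ is the largest eigenvalue of a $2\times 2$ quotient of the form $\begin{pmatrix} r-t-1 & t\\ c & d\end{pmatrix}$. This is the shape one gets from a clique $K_{r+1-t}$ joined to $t$ cut vertices. That suggests the extremal picture: $G$ has $2t$ cut vertices, each side contains a near-clique of order about $r+1-t$, and each cut vertex sends half its degree to each side.

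\textbf{Step 1: both sides are large.} Since $s\le 2t$ and $r>2t^2$, every vertex of $A$ has at most $s$ neighbours in $S$, so at least $r-2t$ neighbours in $A$. Hence $|A|,|B|\ge r-2t+1$.

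\textbf{Step 2: test vectors.} Split $S=S_A\cup S_B$ according to whether a cut vertex has at least as many neighbours in $A$ as in $B$. By pigeonhole, after possibly swapping $A$ and $B$, each side receives at least $t$ edges per cut vertex from ``its'' half of $S$. Define $f$ to be constant $a$ on $A$, constant $b$ on $S_A$, and $0$ elsewhere, and define $g$ symmetrically on $B\cup S_B$. These vectors have disjoint supports. Because there are no edges between $A$ and $B$, the only cross term in $(\alpha f+\beta g)^{\top}A(\alpha f+\beta g)$ comes from edges between $S_A$ and $S_B$, together with edges from $S_A$ to $B$ and from $S_B$ to $A$. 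Those last edges contribute positively when $\alpha,\beta$ have the same sign and can hurt when they have opposite signs. To handle this, I would instead use a 3-part quotient $(A, S, B)$ with the vector $x=(a,c,-b)$ orthogonal to $\mathbf 1$, and bound $\lambda_2\ge \frac{x^{\top}Ax}{x^{\top}x}$ directly. This reduces to the Rayleigh quotient of the weighted quotient matrix
\[
Q=\begin{pmatrix} r-\frac{e(A,S)}{|A|} & \frac{e(A,S)}{|A|} & 0\\ \frac{e(A,S)}{s} & \frac{2e(S)}{s} & \frac{e(B,S)}{s}\\ 0 & \frac{e(B,S)}{|B|} & r-\frac{e(B,S)}{|B|}\end{pmatrix},
\]
so that $\lambda_2(G)\ge\lambda_2(Q)$.

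\textbf{Step 3: the worst case.} We then minimise $\lambda_2(Q)$ over the feasible parameters, subject to $e(A,S)+e(B,S)+2e(S)=rs$, $e(A,S)\le s|A|$, $|A|\ge r-2t+1$, and so on. The expected worst case is $s=2t$, $e(S)=0$, $e(A,S)=e(B,S)=tr$, and $|A|=|B|=$ minimal. Because $\lambda_2(Q)$ is monotone in $|A|$ and in $e(A,S)/|A|$, this should reduce to a polynomial inequality in one variable, showing that the characteristic polynomial of $Q$ at $\rho(r,t)$ has the sign needed to place $\lambda_2(Q)\ge\rho(r,t)$. The hypothesis $r>2t^2$ should be exactly what makes the minimising configuration the balanced one with $e(A,S)/|A|\le t$: since each vertex of $A$ is adjacent to at most $2t$ cut vertices, the degree count is what makes $e(A,S)/|A|\le t$ when $|A|$ is near $r$.

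\textbf{Main obstacle.} I expect the hardest part to be this optimisation. The crude quotient bound may be too weak when $|A|$ is small and $A$ is densely attached to $S$. When it fails, I would sharpen it by taking $f$ nonconstant on $A$, separating $A$ into $N(S)\cap A$ and the rest, and work with a 5-part quotient instead.
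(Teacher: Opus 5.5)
There is a genuine gap at Step~3: the three-part quotient $(A,S,B)$ cannot reach $\rho(r,t)$, however carefully you optimise over its parameters.

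Take exactly the configuration you predict to be worst: $s=2t$, $e(S)=0$, $|A|=|B|=k:=r-t+1$ and $e(A,S)=e(B,S)=tr$. Actual graphs realise this, for instance those of Construction~\ref{const:4.1}. Then $(1,0,-1)^{\top}$ is an eigenvector of your $Q$, and $\lambda_2(Q)=r-\frac{tr}{k}=r-t-\frac{t(t-1)}{k}$. On the other hand, $\rho(r,t)$ is the larger root of $x^2-(r-t-2)x-(2r-t^2-t)$, not the polynomial you wrote. It equals $r-t-\eta$, where $\eta\in(0,1)$ is the smaller root of $z^2-(k+1)z+t(t-1)=0$. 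Hence $t(t-1)=\eta(k+1-\eta)$ and $\rho(r,t)-\lambda_2(Q)=\frac{\eta(1-\eta)}{k}>0$. So the characteristic polynomial of $Q$ at $\rho(r,t)$ has the wrong sign precisely at the extremal point, and no polynomial inequality can complete Step~3. Here also $e(A,S)/|A|=t+\frac{t(t-1)}{k}>t$, contrary to your heuristic.

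Your fallback does not repair this. When $|A|=r-t+1$, each vertex of $A$ has at most $r-t$ neighbours in $A$, hence at least $t$ in $S$. So $N(S)\cap A=A$, and your five-part split collapses back to the three-part quotient.

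The missing idea is to split each side by the \emph{number} of neighbours a vertex has in $S$. Work with $|S|=2l$, $k=r-l+1$ and $\rho_l=\rho(r,l)=r-l-\eta$. In the critical configuration every vertex of $V_i$ has at least $l$ neighbours in $S$, and the excess $e(V_i,S)-l|V_i|$ is at most $2l(l-1)$, carried by few vertices. The paper separates $U_i$ (exactly $l$ neighbours in $S$) from $W_i$ (more than $l$). For the quotient $T$ of $\{U_1,W_1,S,W_2,U_2\}$ it shows $\det(\rho_l I-T)=\frac{(l+\eta)(k-\eta)}{2l}B$ with $B\geq 0$. A separate four-part argument handles the case where one side has no excess vertices (Lemma~\ref{lem:3.3}).

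Before this refinement can be used, one must show that the three-part quotient already exceeds $\rho_l$ in every other configuration. That is, a counterexample must have $|V_1|=|V_2|=r-l+1$ and $S$ independent (Lemma~\ref{lem:3.2} and Claim~2). This is not a simple monotonicity statement, because the constraint $e(V_i,S)\geq |V_i|(r-|V_i|+1)$ couples part sizes and edge counts. The mixed case $|V_1|<r-l+1<|V_2|$ needs the polynomial estimates of Lemma~\ref{lem:3.1}.

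Finally, you assert without argument that the worst case is $s=2t$. The paper handles odd cuts by adding one vertex to make $|S|=2l$ even, and then uses that $\rho(r,l)$ is decreasing in $l$.
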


Define
\begin{equation}\label{eq:lmin}
	\lambda_2^{\min}(r,t)=\inf\bigl\{\lambda_2(G):\ G\ \text{is connected and $r$-regular},\ \kappa(G)\leq 2t\bigr\}. 
\end{equation}
Then Theorem \ref{thm:1.1} can be restated as $\lambda_2^{\min}(r,t) \geq \rho(r,t)$ for $r>2t^2 \geq 8$. To understand the sharpness of this bound, we need to derive an upper bound on $\lambda_2^{\min}(r,t)$ and compare it with $\rho(r,t)$. We are able to achieve this under the extra condition that $t$ is a divisor of $r+1$. Define
\begin{equation}\label{eq:M}
M(\alpha, \beta)=
\begin{pmatrix}
r-t^2 & t^2-t & \alpha\\
r-t^2+1 & t^2-t-2 & \beta\\
\frac{r-t^2+1}{t}\alpha  & (t-1)\beta  & 0
\end{pmatrix}
\end{equation}
for real $\alpha$ and $\beta$. As will be seen in Section~\ref{sec:const}, this matrix is diagonally similar to a real symmetric matrix and hence has only real eigenvalues. We may therefore define
\begin{equation}\label{eq:rhostar}
\rho^*(r,t)=\min\Bigl\{\lambda_{\max}\bigl(M(t-2k_1,t+1-2k_2)\bigr):
k_1,k_2\in\mathbb{Z}, 0\leq k_1\leq t, 1\leq k_2\leq t\Bigr\},
\end{equation}
where $\lambda_{\max}$ means the largest eigenvalue of the matrix. The second main result in this paper is as follows. 
 
\begin{thm}\label{thm:1.2}
Let $t\geq2$ and $r>2t^2$ be integers such that $t\mid(r+1)$. Then $\lambda_2^{\min}(r,t)\leq\rho^*(r,t)$.
\end{thm}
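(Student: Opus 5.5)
The plan is to prove Theorem~\ref{thm:1.2} by constructing, for every admissible pair $(k_1,k_2)$, a connected $r$-regular graph $G=G_{k_1,k_2}$ with $\kappa(G)\le 2t$ and $\lambda_2(G)=\lambda_{\max}\bigl(M(t-2k_1,t+1-2k_2)\bigr)$. Taking the minimising pair then gives $\lambda_2^{\min}(r,t)\le\rho^*(r,t)$. The entries of $M$ suggest the following construction. Put $a=r-t^2+1$ and $b=t^2-t$. Note that $t\mid a$ because $t\mid(r+1)$, and clearly $t\mid b$.

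\emph{Construction.} Each of two ``sides'' $i=1,2$ is $K_a\vee(K_b-\text{perfect matching})$ with vertex classes $A_i$ and $B_i$. In it, a vertex of $A_i$ has degree $r-t$ and a vertex of $B_i$ has degree $r-t-1$. Add an independent set $S=P\cup N$ with $|P|=|N|=t$. Join every vertex of $A_1$ to $t-k_1$ vertices of $P$ and $k_1$ vertices of $N$, and every vertex of $B_1$ to $t+1-k_2$ vertices of $P$ and $k_2$ vertices of $N$. On side $2$ do the same with the roles of $P$ and $N$ swapped. We require these bipartite graphs to be biregular: each vertex of $P$ receives exactly $(t-k_1)a/t$ edges from $A_1$, $k_1a/t$ from $A_2$, $(t+1-k_2)b/t$ from $B_1$ and $k_2b/t$ from $B_2$, and symmetrically for $N$. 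Such biregular graphs exist by the Gale--Ryser theorem, or by an explicit cyclic assignment, because $t\mid a$ and $t\mid b$. A count then shows that every vertex of $S$ has degree $a+(t+1)b/t=r$, so $G$ is $r$-regular. The hypothesis $r>2t^2$ makes all these numbers nonnegative and $a\ge 2$. The graph is connected, and deleting $S$ separates the two sides, so $\kappa(G)\le 2t$.

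\emph{Eigenvalues.} The partition $\{A_1,B_1,A_2,B_2,P,N\}$ is equitable. Let $\sigma$ be the involution swapping the two sides and swapping $P$ with $N$; it is an automorphism of $G$. Vectors that are constant on cells and antisymmetric under $\sigma$ (value $x$ on $A_1$, $-x$ on $A_2$, $y$ on $B_1$, $-y$ on $B_2$, $z$ on $P$, $-z$ on $N$) are mapped by $A(G)$ exactly according to $M(t-2k_1,t+1-2k_2)$. Hence every eigenvalue of $M$, in particular $\lambda_{\max}(M)$, is an eigenvalue of $G$, which gives $\lambda_2(G)\ge\lambda_{\max}(M)$. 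The $\sigma$-symmetric cell-constant vectors give a $3\times3$ quotient $M^+$ with $\lambda_{\max}(M^+)=r$. I will check directly, by evaluating the characteristic polynomials of $M^+$ and $M$ at suitable points, that the two other eigenvalues of $M^+$ are below $\lambda_{\max}(M)$. I expect them to be $O(t^2)$, while $\lambda_{\max}(M)\ge r-t$ by Fiedler's inequality.

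\emph{Main obstacle.} The remaining step is to show that every eigenvector orthogonal to all cell-constant vectors has eigenvalue less than $\lambda_{\max}(M)$. For such an $x$, which sums to zero on each cell, I would bound $x^{T}A(G)x$ piece by piece:
\begin{itemize}
\item the clique $A_i$ contributes $-\|x_{A_i}\|^2$;
\item $K_b$ minus a perfect matching contributes at most $0$ on vectors orthogonal to $\mathbf 1$;
\item the $A_i$--$B_i$ join contributes $0$, since both restrictions sum to zero;
\item $S$ is independent;
\item each side--$S$ bipartite piece, having maximum degree at most $t+1$ on one part and the other part of bounded size, is bounded via Cauchy--Schwarz by about $2\sqrt{t(t+1)}\,\|x\|^2$, or more crudely $O(t^{3/2}\sqrt{r})$.
\end{itemize}
Any such bound is $o(r)$ and comfortably below $r-t\le\lambda_{\max}(M)$ once $r>2t^2$. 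The delicate point is making these constants explicit enough to work throughout the whole range $r>2t^2$. If the Cauchy--Schwarz bound is too weak near $r\approx 2t^2$, my first fallback is to split $x$ as $x_S+x_{\overline S}$ and use the fact that $x_S$ lives on only $2t$ coordinates. Combining the three steps yields $\lambda_2(G)=\lambda_{\max}(M)$, and hence $\lambda_2^{\min}(r,t)\le\rho^*(r,t)$.
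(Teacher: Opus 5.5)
Your construction, the equitable partition and the reversal splitting into $M(t-2k_1,t+1-2k_2)$ and a symmetric block (with eigenvalues $r,-1,-(t+1)$) are exactly the paper's, and you are right that $\kappa(G)\le 2t$ suffices. Two minor points. First, $\sigma$ is an automorphism only if the side-$2$ realisations mirror the side-$1$ ones; you do not need this, because reversal-invariance of the quotient matrix already gives the splitting. Second, only $\lambda_2(G)\le\lambda_{\max}(M)$ is needed for the theorem.

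The genuine gap is the step you flag as the main obstacle. It is the whole content of that inequality, and both ingredients you offer for it fail.

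\emph{The Fiedler step.} Fiedler's inequality bounds $\lambda_2(G)$, not $\lambda_{\max}(M)$; transferring it presupposes $\lambda_2(G)=\lambda_{\max}(M)$. It also yields only $r-\kappa(G)\ge r-2t$. Moreover, the claim $\lambda_{\max}(M)\ge r-t$ is false. For $(t,r)=(2,9)$ and $(k_1,k_2)=(1,1)$ one gets $\det(xI-M(0,1))=x^3-5x^2-13x+5$, whose largest root lies in $(6.77,7)$.

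\emph{The $U$--$S$ term.} No bound independent of $r$, such as $2\sqrt{t(t+1)}$, can hold on $\mathcal S^{\perp}$. Suppose $p_1,p_2\in P$ have $c$ common neighbours. Take $w_S=\frac12(e_{p_1}-e_{p_2})$ and $w_U$ proportional to $A_{US}w_S$, scaled so that $\|w_U\|^2=\frac12$. By biregularity $w_U$ sums to zero on every cell, so this is a unit vector of $\mathcal S^{\perp}$, and its Rayleigh quotient is at least $\sqrt{r-c}-1$. For $k_1\approx t/2$ (the minimising pairs for large $r$), averaging gives $p_1,p_2$ with $r-c\ge r/2-O(t^2)$. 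Your cruder bound $O(t^{3/2}\sqrt r)$ exceeds $r$ unless $r\gtrsim t^3$, so it does not cover $2t^2<r\lesssim t^3$.

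The paper closes this gap with three explicit facts.
\begin{enumerate}
\item[(a)] The columns of $A_{US}$ sum to $r$ and its rows to at most $t+1$, so $\|A_{US}\|_{\mathrm{op}}\le\sqrt{\|A_{US}\|_1\|A_{US}\|_\infty}\le\sqrt{r(t+1)}$. Combined with your non-positive within-cell terms and $2\|w_U\|\|w_S\|\le 1$, this gives $w^{\top}Aw\le\sqrt{r(t+1)}$ on $\mathcal S^{\perp}$. This is what your fallback splitting should produce.
\item[(b)] $\lambda_{\max}(M(\alpha,\beta))\ge\rho(r,t)$ for all real $\alpha,\beta$ (Lemma~\ref{lem:4.2}(i)). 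At $x=\rho(r,t)$ the characteristic polynomial equals minus a nonnegative multiple of a perfect square. Equivalently, apply Cauchy interlacing to the symmetrisation $H^{1/2}MH^{-1/2}$, whose leading $2\times2$ block has largest eigenvalue $\rho(r,t)$.
\item[(c)] $\rho(r,t)>\sqrt{r(t+1)}$ for $r>2t^2$ (Lemma~\ref{lem:4.2}(ii)).
\end{enumerate}
Fiedler's bound $r-2t$ cannot replace (b) and (c). At the admissible pair $(t,r)=(2,9)$ one has $\sqrt{r(t+1)}=\sqrt{27}>5=r-2t$, while $\rho(9,2)=\frac{5+\sqrt{73}}{2}\approx 6.77$. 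With (a)--(c), every eigenvalue of $G$ other than $r$ is at most $\lambda_{\max}(M)$, which is exactly what you need.
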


We will prove Theorem~\ref{thm:1.2} by constructing a family of graphs $G^*_{r,t}$ such that $\kappa(G^*_{r,t})=2t$ and $\lambda_2(G^*_{r,t})=\rho^*(r,t)$ (see Section \ref{sec:const}). This construction suggests that the upper bound in Theorem~\ref{thm:1.2} may be sharp. 

\begin{remark}\label{rem:rho-gap}
(a) The quantity $\rho^*(r,t)$ is close to $\rho(r,t)$, indicating that the bound $\lambda_2(G)\geq\rho(r,t)$ is nearly optimal. More precisely, for each fixed $t$, as $r\to\infty$ through integers satisfying $t\mid(r+1)$, we will prove after Lemma~\ref{lem:4.2} that
\[
\rho^*(r,t)-\rho(r,t)
=
\begin{cases}
\dfrac{t(t-1)^2}{r^2}+O(r^{-3}),
   & \text{if $t$ is even},\\[6pt]
\dfrac{1}{t}+O(r^{-1}),
   & \text{if $t$ is odd}.
\end{cases}
\]
Thus, the absolute difference tends to zero when $t$ is even and to $1/t$ when $t$ is odd. Since $\rho(r,t)\sim r$, the relative difference tends to zero in both cases.

(b) The bound $\lambda_2(G)\geq\rho(r,t)$ in Theorem~\ref{thm:1.1} is strictly better than the bounds in~\eqref{eq:zhang} and~\eqref{eq:zhang1} throughout its parameter range, though at the cost that we require $t < \sqrt{r/2}$. Indeed, $r>2t^2$ implies $2t<r/2+1$, so after replacing $t$ in \eqref{eq:zhang} and~\eqref{eq:zhang1} by $2t$, their respective right-hand sides are
\[
z_1=r-\frac{t(r+2)}{r+3-2t}
\quad\text{and}\quad
z_2=r-\frac{tr}{r+1-2t}.
\]
Note that $\rho(r,t)$ is the larger root of
\[
\psi_{r,t}(x)=x^2-(r-t-2)x-\bigl(2r-t(t+1)\bigr).
\]
Since $r>2t^2$, we have $r(3t+1)-4t^3+4t^2-3t+3 > 2t^3+6t^2-3t+3 > 0$ and $r(3t-1)-4t^3+3t-1 > 2t^3-2t^2+3t-1 > 0$. Note that $tr(r-2t)>0$. So a direct calculation gives
\begin{align*}
\psi_{r,t}(z_1)
&=-\frac{t\bigl(tr(r-2t)+r(3t+1)-4t^3+4t^2-3t+3\bigr)}{(r-2t+3)^2}<0,\\
\psi_{r,t}(z_2)
&=-\frac{t\bigl(tr(r-2t)+r(3t-1)-4t^3+3t-1\bigr)}{(r-2t+1)^2}<0.
\end{align*}
Since $\rho(r,t)$ is the larger root of $\psi_{r,t}$, it follows that $z_1, z_2 < \rho(r,t)$ as claimed. 
\end{remark}

\begin{conj}\label{conj:1.1}
For every fixed integer $t\geq2$, there exists an integer $R(t)>2t^2$ such that, for every integer $r\geq R(t)$ satisfying $t\mid(r+1)$,
$\lambda_2^{\min}(r,t)=\rho^*(r,t)$. Equivalently, a graph $G^*_{r,t}$ associated with a pair $(k_1^*,k_2^*)$ achieving the minimum in~\eqref{eq:rhostar} is extremal among all connected $r$-regular graphs with $\kappa(G)\leq2t$.
\end{conj}

We next turn to the case when the upper bound on $\kappa(G)$ is odd and relatively small compared with $r$. Define
\begin{equation}\label{eq:xi-nu-def}
\xi(r,l)=\frac{r(2r-4l+5)}{2(r-l+2)},
\quad \nu(r,l)
=\frac12\!\left(\tau(r,l)+\sqrt{\frac{(r+l)(r+1)\,W(r,l)}
{(2l-1)(r-l+2)^2}}\right),
\end{equation}
where 
\begin{equation}\label{eq:tau}
\tau(r,l)=r-l-\frac{(l-1)(r+l)}{r-l+2},\quad
W(r,l)=(2l-1)r^2-(6l^2-9l+1)r+l(2l-3)(4l-5).
\end{equation}
The expression $\nu(r,l)$ is real and well defined throughout the parameter range in the theorem below. Indeed, if $l\geq2$ and $r>(2l-1)(2l-2)$, put $u=l-1$ and $x=r-2l(l-1)>0$. Direct expansion gives
\begin{equation}\label{eq:W-positive}
W(r,l)=(2u+1)x^2+(8u^3+6u^2+u+2)x
 +(u+1)(8u^4+6u^2-2u+1)>0.
\end{equation}
Thus every factor in the radicand in~\eqref{eq:xi-nu-def} is positive. The third result in this paper is as follows. 

\begin{thm}\label{thm:1.3}
Let $t\geq2$ and $r>(2t-1)(2t-2)$ be integers. Let $G$ be a connected $r$-regular graph such that $\kappa(G)$ is odd and satisfies $\kappa(G)\leq 2t-1$. Then
\begin{equation}\label{eq:odd-bound}
\lambda_2(G)\;\geq\;\min\left\{\xi(r,t),\,\nu(r,t)\right\}.
\end{equation}
In particular, if $r\geq6t^2$, then
\begin{equation}\label{eq:odd-bounds}
\lambda_2(G)\;\geq\;\xi(r,t).
\end{equation}
\end{thm}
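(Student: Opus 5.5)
Let $S$ be a minimum vertex cut, so $|S|=\kappa=2s-1\le 2t-1$ with $s\le t$. Let $G-S$ split into two nonempty unions of components, $X$ and $Y$. The plan is the standard test-vector argument: produce a vector $f$ orthogonal to $\mathbf 1$ with small Rayleigh quotient $f^{\top}L f/f^{\top}f$, and use $\lambda_2(G)=r-\mu_2(G)\ge r-\max f^{\top}Lf/f^{\top}f$.

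First I would settle the sizes. Every vertex of $X$ has all its $r$ neighbours in $X\cup S$, so $|X|\ge r+1-|S|\ge r-2t+2$, and similarly for $Y$.

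The test vector I would take is:
\begin{itemize}
\item $f=a$ on $X$ and $f=-b$ on $Y$;
\item on $S$, $f$ varies according to how each vertex of $S$ splits its neighbours between $X$ and $Y$.
\end{itemize}
Since $S$ is a minimum cut, each $v\in S$ has neighbours in both $X$ and $Y$.

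Write $d_X(v)$ and $d_Y(v)$ for the numbers of neighbours of $v$ in $X$ and $Y$. The odd cardinality is what matters: we cannot split $S$ evenly. Instead I would partition $S=S_X\cup S_Y$ by majority ($d_X(v)\ge d_Y(v)$ or not), and assume without loss of generality $|S_X|\ge s$.

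The quotient then reduces to a small equitable-partition-like quadratic form in the parameters $a$, $b$ and the values on $S_X$ and $S_Y$. Using the edge counts $[X,S]$ and $[Y,S]$, together with $|X|,|Y|\ge r-2t+2$, I would optimise the free parameters. Minimising over the worst configuration should give two regimes:
\begin{itemize}
\item The generic case gives a bound of the form $r-\lambda_2\le r-\xi(r,t)$. This is the quantity $\tfrac{r(2r-4t+5)}{2(r-t+2)}$, which arises from a $2\times2$ quotient with sides of size about $r-t+2$.
\item The case where $S$ itself is dense and almost all edges of $S$ go to one side yields the root of a quadratic. That root is $\nu(r,t)$, with $\tau$ as half its trace and the radicand involving $W$.
\end{itemize}
The hypothesis $r>(2t-1)(2t-2)$ is what I would use to ensure $W>0$ by \eqref{eq:W-positive}, and to ensure that the extremal $|X|$ is the smallest possible.

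Concretely, the key estimate is a lower bound on $\lambda_2$ via the quotient matrix of the partition $\{X,S,Y\}$ or a refinement of it. One could use eigenvalue interlacing for the quotient matrix $B$ of a (not necessarily equitable) partition, giving $\lambda_2(G)\ge\lambda_2(B)$. I would rather use the Rayleigh-quotient route, because interlacing for non-equitable quotients only gives $\lambda_2(G)\ge \lambda_2(B)$ when $B$ is built from averaged row sums, which is still valid for symmetrised quotients.

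The main obstacle is the case analysis over the edge distribution between $S$ and $X$, $Y$, and within $S$. The Rayleigh bound is monotone in these counts in different directions, and the worst case switches between the two regimes; this is why the statement takes the minimum of $\xi$ and $\nu$. I would first treat the extreme configurations ($S$ a clique, versus $S$ independent) and then argue convexity or monotonicity in between.

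Finally, for $r\ge 6t^2$, I would compare $\xi$ and $\nu$ directly: show $\nu(r,t)\ge\xi(r,t)$ by substituting $x=\xi$ into the defining quadratic of $\nu$ and checking the sign. This is a polynomial inequality in $r$ and $t$ that holds once $r\ge 6t^2$, and it yields \eqref{eq:odd-bounds}.
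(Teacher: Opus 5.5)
Your proposal has a genuine gap. The step that \emph{is} the proof, ``minimising over the worst configuration should give two regimes'', is asserted rather than argued, and the heuristics you give for it point the wrong way.

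Write $|S|=2l-1$ (your $s$ is this $l$) and $g(m)=m(r-m+1)$. Vectors constant on $X$, $S$, $Y$ suffice. For a regular graph, maximising the Rayleigh quotient over such vectors orthogonal to $\mathbf 1$ gives exactly $\lambda_2$ of the three-part quotient matrix. So your route and quotient interlacing (which needs no equitability) coincide, and both give $\lambda_2(G)\ge\lambda_1(Q_1)$, where $Q_1$ is the $2\times2$ matrix left after removing the eigenvalue $r$. The parameters satisfy $m_i\ge r-2l+2$, $p\ge g(m_1)$, $q\ge g(m_2)$ and $p+q\le(2l-1)r$. The per-vertex constraints $p\ge g(m_1)$, $q\ge g(m_2)$ are essential: with only the size bounds you quote, the optimum sits at the smallest allowed sides and falls below $\xi$.

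The minimisation then needs three facts about $\lambda_1(Q_1)$:
\begin{itemize}
\item it is strictly decreasing in both $p$ and $q$, so one may take $p+q=(2l-1)r$;
\item it is non-decreasing in $m_1$ and $m_2$;
\item it is strictly convex in $p$ for fixed sizes.
\end{itemize}
It also needs a case split on the sizes.
\begin{itemize}
\item If both sides have size at least $r-l+2$, shrink both to $r-l+2$ and use the symmetry $p\leftrightarrow q$. This gives at least $r-\frac{(2l-1)r}{2(r-l+2)}=\xi(r,l)$.
\item If some side has $m_1\le r-l+1$, then $p\ge g(m_1)>(2l-1)r/2$. The $p$-derivative is already positive at $(2l-1)r/2$ because $m_1<m_2$, so convexity forces $p=g(m_1)$. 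The other side may then be taken of size $2r-2l+3-m_1$. The resulting function decreases in $m_1$, which is a nontrivial sign check using $r>(2l-1)(2l-2)$. Hence $m_1=r-l+1$, giving $\nu(r,l)$.
\end{itemize}
Oddness enters through $2g(r-l+1)-(2l-1)r=r-2l(l-1)>0$, which forbids both sides from having the minimum size $r-l+1$.

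So $S$ is independent in both extremal configurations. The ``$S$ a clique'' case is irrelevant, since edges inside $S$ only raise the bound. Nor does $\nu$ come from ``$S$ dense with almost all its edges going to one side''. It comes from $V_1\cong K_{r-l+1}$, each vertex having exactly $l$ neighbours in $S$, with $|V_2|=r-l+2$, $p=l(r-l+1)$ and $q=(l-1)(r+l)$, roughly an $l:(l-1)$ split. The dichotomy concerns side sizes, not the density of $S$. Your majority split $S=S_X\cup S_Y$ is a refinement. It is harmless but only adds parameters, because $\xi$ and $\nu$ are values of the unrefined three-part quotient.

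Second, you conflate $l$ and $t$. The optimisation yields $\min\{\xi(r,l),\nu(r,l)\}$ for the actual cut size $2l-1\le2t-1$. To reach $t$ you have two options:
\begin{itemize}
\item use monotonicity of both $\xi(r,l)$ and $\nu(r,l)$ in $l$ (Lemma~\ref{lem:5.3}), handling $l=1$ separately, e.g.\ by Fiedler's inequality;
\item move $2(t-l)$ vertices of one side into $S$. This keeps a separating set of odd size $2t-1$, is legitimate because the quotient analysis never uses minimality of $S$, and also covers $l=1$.
\end{itemize}

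Your last step, deciding $\nu(r,t)\ge\xi(r,t)$ for $r\ge6t^2$ from the sign of the characteristic polynomial of $Q_1$ at $\xi$, matches Lemma~\ref{pro:5.4}. Finally, $\lambda_2\ge r-\max_f f^{\top}Lf/f^{\top}f$ as written is vacuous. What you need is the bound for your particular test vector $f$.
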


\begin{remark}
(a) For $r\geq6t^2$, a direct computation gives
\[
\xi(r,t)-(r-t)=\frac{r-2t(t-2)}{2(r-t+2)}>0.
\]
Thus~\eqref{eq:odd-bounds} implies $r-\lambda_2(G)<t$. In particular, if $\kappa(G)=2t-1$, then
$r-\lambda_2(G)<t=\frac{\kappa(G)+1}{2}$. For the family of graphs $G$ with $\kappa(G)\leq2t-1$, Fiedler's inequality gives the uniform bound $r-\lambda_2(G)\leq2t-1$, whereas~\eqref{eq:odd-bounds} gives $r-\lambda_2(G)<t$ under its hypotheses, improving Fiedler's bound by nearly a factor of two.

(b) The bound~\eqref{eq:odd-bounds} is strictly better than \eqref{eq:zhang} and~\eqref{eq:zhang1}, though at the cost that we require $t \leq \sqrt{r/6}$. Indeed, after replacing $t$ in \eqref{eq:zhang} and~\eqref{eq:zhang1} by $2t-1$, their respective right-hand sides are
\[
y_1=r-\frac{(2t-1)(r+2)}{2(r-2t+4)}
\quad\text{and}\quad
y_2=r-\frac{(2t-1)r}{2(r-2t+2)}.
\]
A direct calculation yields
\begin{align*}
\xi(r,t)-y_1
&=\frac{(2t-1)(rt-2t+4)}{2(r-2t+4)(r-t+2)}>0,\\
\xi(r,t)-y_2
&=\frac{rt(2t-1)}{2(r-2t+2)(r-t+2)}>0.
\end{align*}

(c) At present we do not know whether equality in either bound of Theorem~\ref{thm:1.3} can be attained.
\end{remark}

In the next section we will present the algebraic and spectral tools that will be used in later sections. The proofs of Theorems \ref{thm:1.1}, \ref{thm:1.2} and \ref{thm:1.3} will be given in Sections \ref{sec:pf1}, \ref{sec:const} and \ref{sec:pf2}, respectively. 

\section{Preliminaries}
\label{sec:tl}

A matrix $A=(a_{ij})$ of order $n$ is \emph{non-negative} if $a_{ij} \geq 0$ for all $1 \leq i, j \leq n$, and \emph{tridiagonal} if $a_{ij}=0$ whenever $|i-j|>1$. Given a non-negative matrix $M$ of order $n$, its associated digraph has vertex set $\{1,2,\ldots,n\}$, with $ij$ an arc if and only if $M_{ij}>0$. The matrix $M$ is \emph{irreducible} if its associated digraph is strongly connected. If an $n \times n$ matrix $M$ has only real eigenvalues, then we denote its eigenvalues by $\lambda_1(M)\geq\lambda_2(M)\geq\cdots\geq\lambda_n(M)$, and in particular $\lambda_{\max}(M) = \lambda_1(M)$. The following result relates the spectrum of a non-negative tridiagonal matrix with constant row sum to that of a smaller matrix obtained by removing the trivial eigenvalue.

\begin{thm}[\cite{brouwer89,horn90}]\label{thm:2.1}
Let $A$ be an $(n+1) \times (n+1)$ non-negative tridiagonal matrix of the form
\[
A =\begin{pmatrix}
a_0 & b_0 & & & 0 \\
c_1 & a_1 & b_1 & & \\
& \ddots & \ddots & \ddots & \\
& & \ddots & \ddots & b_{n-1} \\
0 & & & c_n & a_n
\end{pmatrix}.
\]
Assume that every row sum of $A$ equals $r$. If the eigenvalues of $A$ are indexed in non-increasing order as $\lambda_1, \lambda_2, \ldots, \lambda_{n+1}$, then the $n \times n$ matrix
\[
\tilde{A} =
\begin{pmatrix}
r - b_0 - c_1 & b_1 & & & 0 \\
c_1 & r - b_1 - c_2 & b_2 & & \\
& c_2 & \ddots & \ddots & \\
& & \ddots & \ddots & b_{n-1} \\
0 & & & c_{n-1} & r - b_{n-1} - c_n
\end{pmatrix}
\]
has eigenvalues $\lambda_2, \lambda_3, \ldots, \lambda_{n+1}$. In particular, $\lambda_1(A) = r$ and $\lambda_2(A) = \lambda_1(\tilde{A})$.
\end{thm}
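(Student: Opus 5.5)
This is a classical result (Brouwer–Haemers / Horn–Johnson), and I would prove it by exhibiting $\tilde A$ as the matrix of $A$ acting on a quotient space. Let $\mathbf{1}$ be the all-ones vector of length $n+1$. Constant row sum $r$ means $A\mathbf{1}=r\mathbf{1}$, so $\mathbf{1}$ spans an $A$-invariant subspace. Hence $A$ induces a linear map on the $n$-dimensional quotient $\mathbb{R}^{n+1}/\langle\mathbf{1}\rangle$. The characteristic polynomial of $A$ then factors as $(x-r)$ times the characteristic polynomial of the induced map.

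To make this concrete I would use the difference map. Let $D$ be the $n\times(n+1)$ matrix sending $x=(x_0,\dots,x_n)$ to $(x_0-x_1,\,x_1-x_2,\dots,x_{n-1}-x_n)$. Its kernel is exactly $\langle\mathbf{1}\rangle$. Because $\ker D$ is $A$-invariant, $DA$ vanishes on $\ker D$, so $DA=BD$ for a unique $n\times n$ matrix $B$.

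The main computation is to check that $B=\tilde A$, or more conveniently a matrix similar to it. Write $(Ax)_i=c_ix_{i-1}+a_ix_i+b_ix_{i+1}$ with $a_i=r-b_i-c_i$, setting $c_0=b_n=0$. Then $(Ax)_i=x_i-c_i(x_i-x_{i-1})+b_i(x_{i+1}-x_i)$ plus the term $(r-1)x_i$, which cancels in differences. Writing $y_i=x_{i}-x_{i+1}$, one gets
\[
(Ax)_i-(Ax)_{i+1}=c_i y_{i-1}+(r-b_i-c_{i+1})y_i+b_{i+1}y_{i+1}.
\]
This tridiagonal matrix has diagonal $r-b_{i}-c_{i+1}$, superdiagonal $b_{i+1}$ and subdiagonal $c_i$, which is exactly $\tilde A$ with indices shifted ($i=0,\dots,n-1$). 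I expect this index bookkeeping to be the only fiddly part. If the displayed off-diagonal entries of $\tilde A$ turn out transposed relative to $B$, it does not matter: transposes, and diagonally similar tridiagonal matrices with the same products $b_{i+1}c_{i+1}$, have the same spectrum.

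With $B=\tilde A$ established, extend a basis of $\langle\mathbf{1}\rangle$ to a basis of $\mathbb{R}^{n+1}$. In that basis $A$ is block upper triangular with blocks $(r)$ and $\tilde A$, so $\operatorname{spec}(A)=\{r\}\cup\operatorname{spec}(\tilde A)$ as multisets.

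It remains to see that $r$ is the largest eigenvalue of $A$. $A$ is non-negative with positive eigenvector $\mathbf{1}$, so by Perron–Frobenius its spectral radius is $r$. Moreover $A$ is diagonally similar to a symmetric matrix when $b_ic_{i+1}>0$, and degenerate zero cases follow by continuity, so all eigenvalues are real. Therefore $\lambda_1(A)=r$, and after removing one copy of $r$ the remaining eigenvalues $\lambda_2,\dots,\lambda_{n+1}$ are those of $\tilde A$. In particular $\lambda_2(A)=\lambda_1(\tilde A)$.
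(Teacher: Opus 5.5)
Your proof is correct. The paper gives no proof of this statement; it quotes it from Brouwer--Cohen--Neumaier and Horn--Johnson. Your argument is the standard one. The intertwining $DA=\tilde A D$ with the differencing map is the usual similarity by a triangular all-ones (partial-sum) matrix, which puts $A$ in block triangular form with diagonal blocks $(r)$ and $\tilde A$. Your Perron--Frobenius step and your reality argument then correctly give $\lambda_1(A)=r$ and justify the non-increasing ordering.

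One wording slip: the term $(r-1)x_i$ does not cancel in the differences. It contributes $(r-1)y_i$, which together with the $y_i$ from the leading $x_i$ produces the $r$ in the diagonal entry $r-b_i-c_{i+1}$. Your displayed formula is nevertheless right. It gives $B=\tilde A$ exactly, with subdiagonal $c_1,\dots,c_{n-1}$ and superdiagonal $b_1,\dots,b_{n-1}$, so the hedge about a possible transposition is unnecessary.
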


Let $G$ be a graph and $\mathcal P=\{V_1,\ldots,V_s\}$ a partition of $V(G)$. The corresponding quotient matrix $Q=(Q_{ij})$ is defined by
$Q_{ij}=\frac{|[V_i,V_j]|}{|V_i|}\quad(i\ne j)$ and
$Q_{ii}=\frac{2|E(G[V_i])|}{|V_i|}$. The partition $\mathcal P$ is \emph{equitable} if every vertex in $V_i$ has exactly $Q_{ij}$ neighbours in $V_j$ for all $i,j$.

\begin{thm}[{\cite[Quotient Interlacing Theorem]{brouwer11}}, see also \cite{godsil01}]\label{thm:2.2}
Let $G$ be a graph and let $Q$ be the quotient matrix corresponding to a partition $\mathcal P$ of $V(G)$. Then the eigenvalues of $Q$ interlace those of $G$. If $\mathcal P$ is equitable, then every eigenvalue of $Q$ is an eigenvalue of $G$, and the spectral radius of $Q$ equals that of $G$.
\end{thm}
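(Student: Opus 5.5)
The plan is to reduce the statement to Cauchy interlacing for a compression of $A=A(G)$ by a matrix with orthonormal columns. Let $n=|V(G)|$ and let $S$ be the $n\times s$ characteristic matrix of $\mathcal P$, so $S_{vi}=1$ if $v\in V_i$ and $0$ otherwise. Put $D=S^{\top}S=\mathrm{diag}(|V_1|,\dots,|V_s|)$. Counting edges gives $(S^{\top}AS)_{ij}=|[V_i,V_j]|$ for $i\neq j$ and $(S^{\top}AS)_{ii}=2|E(G[V_i])|$. Hence $Q=D^{-1}S^{\top}AS$.

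Set $R=SD^{-1/2}$. Then $R^{\top}R=I_s$, and
\[
D^{1/2}QD^{-1/2}=R^{\top}AR=:B.
\]
So $Q$ is similar to the real symmetric matrix $B$, has real eigenvalues, and shares its spectrum with $B$.

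For the interlacing claim $\lambda_i(G)\geq\lambda_i(Q)\geq\lambda_{n-s+i}(G)$ for $1\le i\le s$, I will use Courant--Fischer. The map $x\mapsto Rx$ is an isometry from $\mathbb R^s$ onto $\mathrm{col}(R)$, and $x^{\top}Bx=(Rx)^{\top}A(Rx)$. So a $k$-dimensional subspace $U\subseteq\mathbb R^s$ maps to a $k$-dimensional subspace $RU\subseteq\mathbb R^n$ with the same Rayleigh quotients. Taking the max–min over $U$ with $\dim U=i$ gives $\lambda_i(B)\le\lambda_i(A)$. Applying the same argument to $-A$ and $-B=R^{\top}(-A)R$ gives the lower inequality $\lambda_i(B)\ge\lambda_{n-s+i}(A)$.

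Now suppose $\mathcal P$ is equitable. Then the $v$-th row of $AS$ records the number of neighbours of $v$ in each $V_j$, which equals $Q_{ij}$ for $v\in V_i$; hence $AS=SQ$. If $Qx=\theta x$ with $x\neq0$, then $A(Sx)=SQx=\theta Sx$. Moreover $Sx\neq0$ because $S$ has full column rank, so $\theta$ is an eigenvalue of $G$.

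For the spectral radius, interlacing already gives $\lambda_1(Q)\le\lambda_1(G)$. For the reverse, $Q$ is non-negative, so by Perron--Frobenius it has a non-negative eigenvector $x\neq0$ for $\rho(Q)=\lambda_1(Q)$, and $Sx$ is a non-negative eigenvector of $A$ with eigenvalue $\lambda_1(Q)$. One cannot conclude directly when $G$ is disconnected, because a non-negative eigenvector of a reducible matrix need not belong to the spectral radius. Instead I will use the unit Perron vector $y\ge0$ of $A$ and the vector $z=D^{-1/2}S^{\top}y=R^{\top}y$. The identity $S^{\top}A=Q^{\top}S^{\top}$ (the transpose of $AS=SQ$, using $A=A^{\top}$) gives $B\,z=D^{1/2}QD^{-1/2}\,D^{-1/2}S^{\top}y=D^{1/2}QD^{-1}S^{\top}y$, which I will check equals $\lambda_1(A)z$. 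Equivalently, $SD^{-1}S^{\top}$ is the orthogonal projection onto the $A$-invariant subspace $\mathrm{col}(S)$, so it commutes with $A$. Since $y\ge0$ is nonzero, $z\neq0$, and therefore $\lambda_1(A)$ is an eigenvalue of $B$, hence of $Q$. Combined with interlacing this gives $\rho(Q)=\rho(G)$.

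I expect this last step, the equality of spectral radii without assuming connectivity, to be the only delicate point. The commuting-projection argument handles it cleanly, since it does not require $A$ to be irreducible.
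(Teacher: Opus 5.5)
The paper does not prove Theorem~\ref{thm:2.2}; it quotes it from Brouwer--Haemers. Your argument is correct and is essentially the standard proof in that source: interlacing comes from the compression $R^{\top}AR$ with $R^{\top}R=I$ together with Courant--Fischer, and the equitable case comes from $AS=SQ$. You also handle the spectral-radius claim correctly when $G$ is disconnected.

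Your last step can be shortened. Transposing $AS=SQ$ gives $Q^{\top}(S^{\top}y)=S^{\top}Ay=\lambda_1(A)\,S^{\top}y$, and $S^{\top}y\neq 0$ because $y\geq 0$ is nonzero. So $\lambda_1(A)$ is an eigenvalue of $Q$ directly, without the commuting-projection argument.
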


\begin{lem}\label{lem:2.3}
Let $r$ and $t$ be positive integers. Let $G$ be a connected $r$-regular graph, and let $S$ be a vertex cut of $G$ such that $G-S$ has components $C_1, C_2,\ldots,C_k$.
\begin{itemize}
\item[{\rm(i)}] If $|S| = 2t$ and $r > 2t^2$, then $|C_i| \geq r-2t+1$ for $1\leq i\leq k$.
\item[{\rm(ii)}] If $|S| = 2t-1$ and $r > (2t-1)(2t-2)$, then $|C_i| \geq r-2t+2$ for $1\leq i\leq k$.
\end{itemize}
\end{lem}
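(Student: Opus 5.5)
The plan is a direct degree-counting argument inside a single component, done once for a general cut size $s=|S|$ and then specialised to $s=2t$ and $s=2t-1$.

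Fix a component $C=C_i$ of $G-S$ and put $c=|C|\geq 1$. Because $C$ is a component of $G-S$, no edge of $G$ joins $C$ to another component $C_j$ with $j\neq i$. Hence every neighbour of a vertex $v\in C$ lies in $C\cup S$. Since $G$ is simple, $v$ has at most $c-1$ neighbours in $C$. As $\deg(v)=r$, it follows that $v$ has at least $r-(c-1)=r-c+1$ neighbours in $S$. On the other hand, $v$ has at most $|S|=s$ neighbours in $S$. So
\[
r-c+1\;\leq\;s, \qquad\text{that is,}\qquad c\;\geq\; r-s+1 .
\]
This holds for every component and every vertex cut $S$, whether or not $S$ is minimal.

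Now I specialise:
\begin{itemize}
\item[(i)] For $s=2t$ the inequality gives $|C_i|\geq r-2t+1$.
\item[(ii)] For $s=2t-1$ it gives $|C_i|\geq r-2t+2$.
\end{itemize}

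The argument is short enough that I expect no real obstacle. The only steps to state carefully are that each component is nonempty, so a vertex $v\in C$ exists, and that vertices of $C$ have no neighbours outside $C\cup S$. The hypotheses $r>2t^2$ and $r>(2t-1)(2t-2)$ are not needed for these particular bounds. They only make the bounds meaningful: they guarantee $r-s+1>s$, so every component is larger than the cut, and they presumably reflect how the lemma is used later. If a sharper statement were intended, I would refine the count by bounding $|[C,S]|$ from above using the edges each vertex of $S$ must send to the other components.
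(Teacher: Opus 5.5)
Your argument is correct and is essentially the paper's proof. A vertex $v\in C_i$ has all its neighbours in $C_i\cup S$, so $|C_i|-1\geq r-|S|$, giving $|C_i|\geq r-|S|+1$; this specialises to both parts, and, as you observe, the paper's proof also never uses the lower bounds on $r$.
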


\begin{proof}
Consider a component $C_i$ and a vertex $v\in C_i$. Since $C_i$ is a component of $G-S$, every neighbour of $v$ lies in $C_i\cup S$. Thus $v$ has at least $r-|S|$ neighbours in $C_i$, and hence $|C_i|\geq r-|S|+1$. Substituting $|S|=2t$ and $|S|=2t-1$ gives~(i) and~(ii), respectively.
\end{proof}

\noindent \textbf{Convention.} At several points in the next three sections, we will differentiate functions originally defined on discrete variables; when doing so, we regard these variables as real variables.

\section{Proof of Theorem~\ref{thm:1.1}}
\label{sec:pf1}

Throughout this section, put $\rho_l=\rho(r,l)$ and $k=r-l+1$, where $\rho$ is defined in~\eqref{eq:rho}, and write
\begin{equation}\label{eq:eta-l}
 \eta=r-l-\rho_l.
\end{equation}
Then $\eta$ is the smaller root of
\begin{equation}\label{eq:eta-equation}
 z^2-(r-l+2)z+l(l-1)=0.
\end{equation}
Thus $\eta=0$ when $l=1$, whereas $0<\eta<1$ when $l\geq2$, because the polynomial in~\eqref{eq:eta-equation} is positive at $0$ and negative at $1$ under the assumption $r>2l^2$.

\subsection{Minimising the five-part quotient}

\begin{lem}\label{lem:3.1}
Let $l$, $c$, $b$ and $r$ be positive integers such that $1\leq c\leq l$, $b\geq1$ and $r>2l^2$, and let $\eta$ be the smaller root of~\eqref{eq:eta-equation}. Let
\begin{align}
\Xi(c,b)={}&bc^2+2bcl-bcr-bc-bl\eta-bl-c^3-2c^2l+3c^2r+3c^2-2cl^2\notag\\
&+4clr-cl\eta+5cl-2cr^2-4cr-2c+2l^2r+2l^2-2lr-2l,
\label{eq:Xi-def}
\end{align}
and
\begin{align}
N(c)={}&-c^4+(-3l+3r+4)c^3+\bigl(-2\eta l-5l^2+6lr+11l-2r^2-5r-3\bigr)c^2\notag\\
&+\bigl(-2\eta l^2-2\eta l-3l^3+5l^2r+8l^2-4lr-6l\bigr)c-2\eta l^3-2\eta l^2r-2\eta l^2\notag\\
&+4\eta lr+8\eta l-l^3-l^2r+l^2.
\label{eq:N-def}
\end{align}
Then $\Xi(c,b)<0$ and $N(c)<0$.
\end{lem}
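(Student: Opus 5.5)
The plan is to exploit the very different roles of $b$, $c$ and $\eta$. The variable $b$ enters $\Xi$ linearly, $c$ is confined to the short range $[1,l]$, and $\eta$ is a small number in $[0,1)$ that we control through its defining equation~\eqref{eq:eta-equation}. The overall strategy is to reduce each inequality to a statement about a polynomial in $c$, and then to a statement about its coefficients after substituting $r=2l^2+s$ with $s\ge1$.

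\emph{The inequality $\Xi(c,b)<0$.} First collect the terms of~\eqref{eq:Xi-def} containing $b$. This gives $\Xi(c,b)=b\,\beta(c)+\Xi_0(c)$, where
\[
\beta(c)=c^2+2cl-c(r+1)-l(\eta+1)\le c\,(c+2l-r-1)<0,
\]
since $c\le l$ and $r>2l^2\ge 3l$. Hence $\Xi$ is decreasing in $b$, and it suffices to treat $b=1$. Then
\[
\Xi(c,1)=-c^3+c^2(3r-2l+4)+c\bigl(-2r^2+4lr-5r-2l^2+7l-3-l\eta\bigr)+2l^2r-2lr+2l^2-3l-l\eta .
\]
Discarding the negative $\eta$-terms and using $c^2\le cl$ in the $3c^2r$ term, the quantity $\Xi(c,1)/r$ is bounded above by
\[
-2cr+7cl+2l^2+4c+(\text{lower order}).
\]
Because $r>2l^2$, we have $2cr>4cl^2\ge 7cl+2l^2+\cdots$ when $l\ge2$. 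The case $l=1$, where $c=1$ and $\eta=0$, is a direct numerical check. I will make the bound rigorous by writing $r=2l^2+s$ and checking that the resulting polynomial in $(c,l,s)$ has a negative dominant part on $1\le c\le l$.

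\emph{The inequality $N(c)<0$.} This is the harder one and the main obstacle. In~\eqref{eq:N-def} the $r^2$-coefficient is $-2c^2$, and the $r$-coefficient is
\[
3c^3+6lc^2-5c^2+5l^2c-4lc-l^2-2\eta l^2+4\eta l .
\]
For $c\ge2$ the crude estimate $2rc^2>4l^2c^2$ dominates $3c^3+6lc^2+5l^2c$, so $N(c)<0$ follows as in the previous step.

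At $c=1$, however, the leading terms nearly cancel. We get
\[
N(1)=-2r^2+r\bigl(4l^2+2l-2+\eta(4l-2l^2)\bigr)+O(l^3),
\]
and $2r^2-4l^2r=2r(r-2l^2)$ is only guaranteed to be at least $2r$. So the crude bounds $0\le\eta<1$ are not enough there. To handle this, I will use the exact relation
\[
\eta\,(r-l+2-\eta)=l(l-1)
\]
from~\eqref{eq:eta-equation}. It lets me replace each product $\eta r$ by $l(l-1)+\eta(l-2+\eta)$, and it gives the estimate $\eta\le l(l-1)/(r-l+1)$. After these substitutions $N(1)$ becomes a polynomial in $l$, $s=r-2l^2$ and $\eta$ with only bounded $\eta$-coefficients, and I would verify that it is negative for $s\ge1$, $l\ge2$. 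The same substitution will be applied uniformly for all $c$ to avoid a separate case analysis.

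To finish, I will view $N$ as a quartic in $c$ on $[1,l]$. I will show that either its derivative has constant sign there, or that it suffices to check the endpoints $c=1$ and $c=l$ together with a bound at interior points. The delicate step is $c=1$, and possibly $l=2$ with small $s$; there I expect to need the exact value of $\eta$ or a finite check.
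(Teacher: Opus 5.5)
\textbf{There is a genuine gap in your bound for $N(c)$.}

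Your first reductions for $\Xi$ match the paper's: linearity in $b$, then dropping the negative $\eta$-terms. The gap is in the polynomial estimate for $N$, exactly where the inequality is tightest.

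Your claim that for $c\ge2$ the term $2rc^2>4l^2c^2$ dominates $3c^3+6lc^2+5l^2c$ is false for small $l$. At $c=2$ it requires $l^2-4l-4>0$, so it fails for every $l\le4$. At $c=l$ it requires $l>7/2$. No refinement of a leading-term domination can work there. At $(c,l,r)=(2,2,9)$ the $r^2$- and $r$-terms of $N$ cancel exactly: they contribute $-648+648$, independently of $\eta$, because $-2\eta l^2+4\eta l=0$ at $l=2$. So $N(2)=-48-48\eta$ comes entirely from the $r$-free terms.

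Your closing paragraph does not repair this. You give no reason why a quartic in $c$ is controlled by its endpoints, and a finite check cannot cover all $r>2l^2$.

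The analogous inequality $4cl^2\ge7cl+2l^2$ in your $\Xi$ step also fails, at $(c,l)=(1,2),(1,3),(2,2)$. There it is harmless. The linear part of your bound on $\Xi(c,1)/r$ is really $-2cr+7cl-5c+2l^2-2l$, with no $+4c$ term. Keeping the $-5c-2l$ and using $2cr\ge4cl^2+2c$ makes that estimate close.

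\textbf{The obstacle you single out at $c=1$ is not real.} The $\eta$-coefficient of $N(c)$ is $-2l\bigl(c^2+cl+c+l^2+lr+l-2r-4\bigr)$. This is negative for every $c$ once $l\ge2$, and $\eta=0$ when $l=1$. So you can set $\eta=0$ exactly as you did for $\Xi$. With $r=2l^2+s$ you then get $N(1)\le-4l^2s-2s^2+2ls-2s+2l<0$ for $s\ge1$. The near-cancellation at $c=1$ is between $2lr$ and the constant $-4l^3$. The relation $\eta(r-l+2-\eta)=l(l-1)$ plays no role.

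\textbf{What you are missing is a uniform certificate.} After setting $\eta=0$, both functions are concave in $r$, with second derivatives $-4c$ and $-4c^2$. So it suffices that the value and the $r$-derivative are negative at $r_0=2l^2+1$. The paper then substitutes $c=\gamma+1$ and $l=\gamma+\sigma+1$ with $\gamma,\sigma\ge0$. This turns all four quantities into manifestly negative polynomials in $\gamma$ and $\sigma$, which handles every $1\le c\le l$ and every small $l$ at once.
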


\begin{proof}
The coefficient of $b$ in $\Xi(c,b)$ is $c(c+2l-r-1)-l(\eta+1)<0$,
and hence $\Xi(c,b)\leq\Xi(c,1)$. The coefficient of $\eta$ in $\Xi(c,1)$ is $-l(c+1)$, so
$\Xi(c,1)\leq\Xi_0(c,r,l):=\Xi(c,1)|_{\eta=0}$. Put $\gamma=c-1\geq0$, $\sigma=l-c\geq0$, and $r_0=2l^2+1$. Each of the following expansions is obtained by substituting $c=\gamma+1$, $l=\gamma+\sigma+1$ and $r_0=2l^2+1$, and then collecting monomials in $\gamma$ and $\sigma$:
\begin{align*}
\left.\frac{\partial\Xi_0}{\partial r}\right|_{r=r_0}
={}&-8\sigma^2\gamma-6\sigma^2-16\sigma\gamma^2-24\sigma\gamma-10\sigma-8\gamma^3-15\gamma^2-17\gamma-10<0.
\end{align*}
Since $\partial^2\Xi_0/\partial r^2=-4c<0$, $\Xi_0$ decreases for $r\geq r_0$. Moreover,
\begin{align*}
\Xi_0(c,r_0,l)={}&-4(2\gamma+1)\sigma^4
-4(2\gamma+1)(4\gamma+3)\sigma^3-2(24\gamma^3+45\gamma^2+34\gamma+11)\sigma^2\\
&-(32\gamma^4+76\gamma^3+90\gamma^2+61\gamma+20)\sigma-(\gamma+1)(8\gamma^4+14\gamma^3+21\gamma^2\\
&+10\gamma+10)<0.
\end{align*}
This proves $\Xi(c,b)<0$.

For $N(c)$, its coefficient of $\eta$ is $-2l\bigl(c^2+cl+c+l^2+lr+l-2r-4\bigr)<0$ when $l\geq2$. For $l=1$, we have $\eta=0$. Thus $N(c)\leq N_0(c,r,l):=N(c)|_{\eta=0}$. Since $\partial^2N_0/\partial r^2=-4c^2<0$, it suffices to verify that $\partial N_0/\partial r<0$ at $r=r_0$ and that $N_0(c,r_0,l)<0$. With the same $\gamma$ and $\sigma$ as above,
\begin{align*}
\left.\frac{\partial N_0}{\partial r}\right|_{r=r_0}
={}&-8\sigma^2\gamma^2-11\sigma^2\gamma-4\sigma^2
-16\sigma\gamma^3-32\sigma\gamma^2\\
&-22\sigma\gamma-6\sigma-8\gamma^4-18\gamma^3-20\gamma^2-18\gamma-8<0
\end{align*}
and
\begin{align*}
N_0(c,r_0,l)={}&-2\gamma(4\gamma+3)\sigma^4
-\gamma(32\gamma^2+44\gamma+11)\sigma^3\\
&-2(\gamma+1)(24\gamma^3+21\gamma^2+4\gamma+2)\sigma^2\\
&-2(\gamma+1)(16\gamma^4+20\gamma^3+9\gamma^2+2\gamma+2)\sigma\\
&-4(\gamma+1)^2(2\gamma^4+\gamma^3+(\gamma-1)^2)<0.
\end{align*}
Hence $N(c)<0$.
\end{proof}

\begin{lem}\label{lem:3.2}
Let $l$, $t$ and $r$ be positive integers such that $1\leq l\leq t$ and $r>2t^2$. Let $m_1$, $m_2$, $p$ and $q$ be positive integers such that
\begin{equation}\label{eq:coarse-admissible}
 m_1,\ m_2\geq r-2l+1,\;
 p\geq m_1(r-m_1+1),\;
 q\geq m_2(r-m_2+1)\;\ \text{and}\;\
 p+q=2lr.
\end{equation}
Set
\begin{equation}\label{eq:Q1-general}
Q_1=\begin{pmatrix}
r-\dfrac{p}{m_1}-\dfrac{p}{2l} & \dfrac{q}{2l}\\
 \dfrac{p}{2l} & r-\dfrac{q}{m_2}-\dfrac{q}{2l}
\end{pmatrix}.
\end{equation}
If $\lambda_1(Q_1)<\rho_l$, then $m_1=m_2=k=r-l+1$.
\end{lem}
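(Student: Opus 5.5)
We prove the contrapositive: if $m_1\neq k$ or $m_2\neq k$, then $\lambda_1(Q_1)\ge\rho_l$. Since $Q_1$ is a $2\times2$ matrix with positive off-diagonal entries, its eigenvalues are real. Moreover $\lambda_1(Q_1)\ge x$ holds as soon as $\det(xI-Q_1)\le 0$, or as soon as a diagonal entry is at least $x$. So it suffices to check $\det(\rho_l I-Q_1)\le0$ in the bad cases.

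\textbf{Step 1: make $Q_1$ monotone in $p$ and $q$.} Since $m_i\ge r-2l+1>r/2$, the function $m\mapsto m(r-m+1)$ is decreasing on the relevant range. Hence the admissibility constraints force $p,q\ge m_i(r-m_i+1)$, with $p+q=2lr$, and $m_i$ cannot exceed a threshold. Concretely, if both $m_1,m_2\ge k+1=r-l+2$, then
\[
p+q\ge 2(r-l+2)(l-1) .
\]
We compare this with $2lr$.

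\textbf{Step 2: identify the configurations.} Write $m_i=r-l+1+d_i$, so that $m_i(r-m_i+1)=(r-l+1+d_i)(l-d_i)$, with $d_i\ge -l$. Both entries $p/m_i$ and $p/(2l)$ enter the diagonal negatively. The worst case for $\lambda_1$, namely the smallest value, arises when $p$ and $q$ take their extreme admissible values. I expect the case analysis to reduce to two situations:
\begin{itemize}
\item[(a)] some $m_i\le r-l$, i.e.\ $d_i<0$, so the corresponding side is small and has few outgoing edges to spare;
\item[(b)] some $m_i\ge r-l+2$, i.e.\ $d_i>0$, so the side is large.
\end{itemize}
Fix $m_1,m_2$ and view $\det(\rho_l I-Q_1)$ as a function of $p$, with $q=2lr-p$. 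It is a quadratic in $p$, so its maximum over the admissible interval for $p$ is attained at an endpoint or at the vertex. I would reduce to the boundary $p=m_1(r-m_1+1)$ (or symmetrically for $q$), setting $c=l-d_1$ or a similar integer parameter with $1\le c\le l$ and $b$ the excess. After clearing the denominators $m_1m_2(2l)^2$, the determinant should factor as a positive quantity times an expression like $\Xi(c,b)$ or $N(c)$ from Lemma~\ref{lem:3.1}. Those expressions contain exactly the variables $c$, $b$, $l$, $r$ and $\eta=r-l-\rho_l$. We substitute $\rho_l=r-l-\eta$ and use~\eqref{eq:eta-equation}, $\eta^2=(r-l+2)\eta-l(l-1)$, to eliminate $\eta^2$. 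Lemma~\ref{lem:3.1} then gives negativity, hence $\det(\rho_lI-Q_1)<0$ and $\lambda_1(Q_1)>\rho_l$, a contradiction.

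\textbf{Step 3: the remaining case.} When $m_1=m_2=k$, the constraints give $p,q\ge kl$ with $p+q=2lr$. This is consistent since $2kl\le 2lr$, and this is the allowed case, so nothing needs to be proved there.

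\textbf{Main obstacle.} The hardest step is the algebraic reduction in Step 2. We must choose the correct extreme values of $p$ and $q$, and show that $\det(\rho_lI-Q_1)$ is indeed monotone or concave in the free parameter, so that the reduction is valid. We must then match the polynomial exactly to $\Xi$ (one side deviating from $k$, excess $b$) and to $N$ (both sides deviating symmetrically). I would first try a symbolic computation of $2l\,m_1m_2\cdot 2l\cdot\det(\rho_lI-Q_1)$ with $m_1=r-l+1-c$ or $r-l+1+c$, expanded in $b=p-m_1(r-m_1+1)$, and check coefficient by coefficient against~\eqref{eq:Xi-def} and~\eqref{eq:N-def}.
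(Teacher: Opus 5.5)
Your proposal has a genuine gap: it never handles the case $m_1,m_2\ge k$ with $(m_1,m_2)\ne(k,k)$, and your planned method cannot handle it.

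Step~1 does not exclude this case. Your bound $2(r-l+2)(l-1)$ is below $2lr$, and for $m_i\ge r+1$ the constraint $p\ge m_i(r-m_i+1)$ is vacuous. In Step~2 you propose to reduce to the endpoint $p=m_1(r-m_1+1)$. But with $q=2lr-p$, the function $D(p)=\det(\rho_lI-Q_1)$ is a concave quadratic whose vertex is at $p^*=lr-\frac{l(l+\eta)(m_2-m_1)}{2l+m_1+m_2}$. This vertex can lie strictly inside the admissible interval; for example, $p^*=lr$ when $m_1=m_2=k+1$. So showing negativity at an endpoint proves nothing.

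What is needed is the value at the vertex,
\[
\frac{(2lr+(m_1+m_2)\rho_l)H}{2m_1m_2(2l+m_1+m_2)},\qquad H=2l^2r+l(m_1+m_2)\rho_l-2m_1m_2(r-\rho_l).
\]
Writing $m_1=k+a$ and $m_2=k+b$, one decomposes $H=H_0+C(a+b)-2(l+\eta)ab$. Here $H_0=2(r+1)\eta(1-\eta)\ge0$ and $H_0+C<0$, so $C<0$ as well, and hence $H<0$ once $a+b\ge1$. Lemma~\ref{lem:3.1} plays no role in this case.

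In the case where some $m_i<k$, you also misread Lemma~\ref{lem:3.1}. Counting removes two sub-cases, though not the one your Step~1 examines:
\begin{itemize}
\item $m_i\le k-1$ forces $p$ (or $q$) to be at least $(r-l)(l+1)>lr$, so both sides cannot be small;
\item $m_1<k=m_2$ gives $p+q\ge 2lr+r-2l^2>2lr$.
\end{itemize}
This leaves $m_1=k-c<k<k+b=m_2$, where $b\ge1$ is the excess of $m_2$ over $k$. It is not the excess of $p$ over $m_1(r-m_1+1)$, as in your plan.

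The quantity $\Xi(c,b)$ is not the determinant at the boundary. It equals $l(k-c)(k+b)\,\partial D/\partial p$ evaluated at $p_0=(k-c)(l+c)$. Its negativity is exactly what justifies, by concavity, the move to the endpoint $p=p_0$, which your plan simply assumes. You then need $\partial D(p_0,m_2)/\partial m_2=-\frac{q}{m_2^2}\bigl(c-\eta+\frac{p_0}{2l}\bigr)<0$ to push $m_2$ down to $k+1$. Only there does $2l(k+1)D(p_0,k+1)=N(c)$ hold; $N$ has nothing to do with both sides deviating symmetrically.

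A coefficient-by-coefficient match of the determinant against $\Xi$ and $N$, as you propose, would therefore fail.
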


\begin{proof}
Set $y=r-\rho_l=l+\eta$. Since $Q_1$ is diagonally similar to a symmetric matrix, its eigenvalues are real. Hence $\lambda_1(Q_1)<\rho_l$ implies
\begin{equation}\label{eq:D-positive}
 D:=\det(\rho_l I-Q_1)>0.
\end{equation}
We prove that~\eqref{eq:D-positive} cannot hold unless $m_1=m_2=k$.

First suppose that $m_1,\ m_2\geq k$. For fixed $m_1$, $m_2$ and $q=2lr-p$, the determinant $D$ is a concave quadratic function of $p$. Its maximum value is
\begin{equation}\label{eq:Dmax}
\frac{\bigl(2lr+(m_1+m_2)\rho_l\bigr)H}{2m_1m_2(2l+m_1+m_2)},
\end{equation}
where $H=2l^2r+l(m_1+m_2)\rho_l-2m_1m_2(r-\rho_l)$.
Write $m_1=k+a$ and $m_2=k+b$, where $a,\ b\geq0$. Using~\eqref{eq:eta-equation}, we obtain
\begin{equation}\label{eq:H-decomp}
 H=H_0+C(a+b)-2(l+\eta)ab,
\end{equation}
where $H_0=2(r+1)\eta(1-\eta)$ and $C=lr-(2r-l+2)(l+\eta)$.
Moreover,
\begin{equation}\label{eq:H0C}
H_0+C=l(l-r-2)+l\eta-2(r+1)\eta^2<0.
\end{equation}
Indeed, $0\leq\eta<1$ and $r>l-1$. Since $H_0\geq0$, equation~\eqref{eq:H0C} also gives $C<0$. Therefore, if $a+b\geq1$, then~\eqref{eq:H-decomp} yields $H\leq H_0+C<0$.
It follows from~\eqref{eq:Dmax} that $D<0$. Hence, when $m_1,\ m_2\geq k$, condition~\eqref{eq:D-positive} can hold only when $m_1=m_2=k$.

It remains to consider the case in which $m_1<k$ or $m_2<k$. Let $g(x)=x(r-x+1)$. Since $g$ is decreasing on $[r-2l+1,k-1]$, if $m_1<k$, then $p\geq g(m_1)\geq g(k-1)=(r-l)(l+1)>lr$.
The same conclusion holds if $m_2<k$. Hence $m_1$ and $m_2$ cannot both be less than $k$. Moreover, $m_1<k$ and $m_2=k$ would give $p+q\geq (r-l)(l+1)+lk=2lr+(r-2l^2)>2lr$.
By symmetry, the only remaining case is $m_1<k<m_2$. Write
\begin{equation}\label{eq:mixed-orders}
m_1=k-c,\quad m_2=k+b,\quad 1\leq c\leq l,\quad b\geq1.
\end{equation}
Set $p_0=m_1(r-m_1+1)=(k-c)(l+c)$. For fixed $m_2$, the determinant $D=D(p,m_2)$ is a concave function of $p$. Differentiating at $p=p_0$ and using~\eqref{eq:eta-equation} gives
\begin{equation}\label{eq:D-derivative}
\left.\frac{\partial D}{\partial p}\right|_{p=p_0}
 =\frac{\Xi(c,b)}{l(k-c)(k+b)},
\end{equation}
where $\Xi(c,b)$ is defined in~\eqref{eq:Xi-def}. By Lemma~\ref{lem:3.1}, $\Xi(c,b)<0$. Thus, by concavity, $D(p,m_2)$ is strictly decreasing for $p\geq p_0$.

With $p=p_0$, differentiation with respect to $m_2$ gives
$\frac{\partial D(p_0,m_2)}{\partial m_2}
=-\frac{q}{m_2^2}\left(c-\eta+\frac{p_0}{2l}\right)<0$, since
$q>0$, $c-\eta>0$ and $p_0>0$. Hence $D(p,m_2)\leq D(p_0,k+1)$. Finally, substituting $p=p_0$ and $m_2=k+1$, multiplying by the positive denominator $2l(k+1)$, and reducing $\eta^2$ by~\eqref{eq:eta-equation}, we obtain
\begin{equation}\label{eq:mixed-boundary-D}
D(p_0,k+1)=\frac{N(c)}{2l(k+1)},
\end{equation}
where $N(c)$ is defined in~\eqref{eq:N-def}. By Lemma~\ref{lem:3.1}, $N(c)<0$, so~\eqref{eq:mixed-boundary-D} contradicts~\eqref{eq:D-positive}. This rules out the remaining case and completes the proof.
\end{proof}

\begin{lem}\label{lem:3.3}
Let $l$, $t$ and $r$ be positive integers such that $1\leq l\leq t$ and $r>2t^2$, and put $k=r-l+1$. Let $G$ be an $r$-regular graph and $\{V_1, S, V_2\}$ a partition of $V(G)$ such that $S$ is an independent set, $[V_1,V_2]=\emptyset$, $|S|=2l$ and $|V_1|=|V_2|=k$. Then $\lambda_2(G)\geq\rho_l$. Equality can occur only if $|[S,V_1]|=|[S,V_2]|=lr$ and, in each of $V_1$ and $V_2$, exactly $l(l-1)$ vertices have $l+1$ neighbours in $S$, while all remaining vertices have $l$ neighbours in $S$.
\end{lem}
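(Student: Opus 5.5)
The plan is to bound $\lambda_2(G)$ from below by the largest eigenvalue of a small matrix built from a quotient of $G$. That matrix should depend only on the numbers $p=|[S,V_1]|$ and $q=|[S,V_2]|$ and on how the $S$-degrees are distributed in $V_1$ and $V_2$. The eigenvalue is then minimised over all admissible data.

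\textbf{Step 1: counting.} Every vertex of $V_i$ has at most $k-1=r-l$ neighbours in $V_i$, so it has at least $l$ neighbours in $S$. Since $S$ is independent, $p+q=2lr$, and also $p,q\ge lk$. Write $p=lk+e_1$ and $q=lk+e_2$. Then $e_1+e_2=2l(l-1)$, and $e_i$ is exactly the total excess of $S$-degrees over $l$ inside $V_i$.

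\textbf{Why a coarse quotient is not enough.} The three-part quotient on $\{V_1,S,V_2\}$ is tridiagonal with constant row sum $r$. By Theorem~\ref{thm:2.2} and Theorem~\ref{thm:2.1} it reduces to the matrix $Q_1$ of \eqref{eq:Q1-general} with $m_1=m_2=k$. Its maximum over the admissible $p$ is $r-lr/k$, which I expect to be slightly smaller than $\rho_l$. So the argument must exploit the excess vertices.

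\textbf{Step 2: a refined test vector.} Let $B_i\subseteq V_i$ be the vertices with more than $l$ neighbours in $S$, and put $A_i=V_i\setminus B_i$. I will use an antisymmetric vector:
\begin{itemize}
\item value $x$ on $A_1$ and $y$ on $B_1$;
\item value $-x$ on $A_2$ and $-y$ on $B_2$;
\item on each $s\in S$, a value proportional to $d_{V_1}(s)-d_{V_2}(s)$.
\end{itemize}
After subtracting its mean, this vector is orthogonal to $\mathbf 1$, so its Rayleigh quotient is a lower bound for $\lambda_2(G)$. Equivalently, one may apply quotient interlacing to the five-part partition $\{A_1,B_1,S,B_2,A_2\}$. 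The Rayleigh quotient will be a function only of the following data:
\begin{itemize}
\item the sizes $|B_i|$;
\item the excesses $e_i$;
\item the edge counts $|[A_i,B_i]|$ and $|E(G[B_i])|$, which the degree sums determine or bound.
\end{itemize}

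\textbf{Step 3: optimisation.} I would treat the parameters as reals, as in the Convention, and first optimise $x$, $y$ and the scaling on $S$. This makes the bound the largest root of an explicit quadratic. The resulting determinant $\det(\rho_l I-\cdot)$ is concave in $p$, as in the proof of Lemma~\ref{lem:3.2}, so its worst case is the symmetric one $p=q=lr$, i.e.\ $e_1=e_2=l(l-1)$. Monotonicity in $|B_i|$ then pushes to the extreme in which each excess vertex has exactly $l+1$ neighbours in $S$, so that $|B_i|=e_i=l(l-1)$. With $\eta=r-l-\rho_l$, I expect this extreme configuration to reduce to the quadratic \eqref{eq:eta-equation}, namely $z^2-(k+1)z+l(l-1)=0$, giving exactly $\lambda_2(G)\ge r-l-\eta=\rho_l$.

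\textbf{Equality.} Tracing the strict inequalities back gives the equality statement: equality forces $p=q=lr$, forces $|B_i|=l(l-1)$, and forces every vertex of $B_i$ to have exactly $l+1$ neighbours in $S$.

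\textbf{Main obstacle.} The hard part is Step 3. The quadratic depends on several integer parameters, and one must show that the determinant is negative away from the extreme configuration. This needs careful sign analysis using $r>2t^2\ge 2l^2$ and $0\le\eta<1$, in the spirit of Lemma~\ref{lem:3.1}. I would first try substituting the relation \eqref{eq:eta-equation} to eliminate $\eta^2$, then check monotonicity in each parameter separately.
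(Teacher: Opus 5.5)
Your setup is the paper's. The partition $\{A_1,B_1,S,B_2,A_2\}$ is the paper's $\{U_1,W_1,S,W_2,U_2\}$, and the extremal configuration you predict is the right one: $p=q=lr$, $|B_i|=l(l-1)$, every excess vertex with exactly $l+1$ neighbours in $S$, and antisymmetric block with larger root $\rho_l$.

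But Step~3, which you defer, is the whole content of the lemma, and the reduction you sketch for it does not work. The parameters are coupled ($e_i=|B_i|z_i$ with $z_i$ the average excess on $B_i$, and $e_1+e_2=2h$, $h=l(l-1)$), so optimising first in $p$ and then in $|B_i|$ is not justified. Moreover, for the five-part quotient $T$ the determinant is not concave with a symmetric worst case. Writing $p=lr+\varepsilon$ and $a_i=z_i-1$, and eliminating $\eta^2$ via $h=\eta(k+1-\eta)$, the paper obtains
\[
\det(\rho_lI-T)=\frac{(l+\eta)(k-\eta)}{2l}\Bigl(2\varepsilon^2+(l-\eta)\varepsilon(a_1-a_2)+h(l+\eta)(a_1+a_2)+2\eta la_1a_2\Bigr).
\]
For fixed $a_1,a_2$ this is convex in $\varepsilon$. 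When $a_1\neq a_2$ it is minimised at $\varepsilon=-(l-\eta)(a_1-a_2)/4\neq0$, not at $p=q$.

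What replaces your optimisation is a direct sign estimate. Using $|\varepsilon|\le h$ and $|a_1-a_2|\le a_1+a_2$, the bracket is at least $2\varepsilon^2+2\eta h(a_1+a_2)+2\eta la_1a_2\ge0$, with equality only if $\varepsilon=a_1=a_2=0$. This is also where the equality characterisation actually comes from.

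Several further pieces are missing.

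\textbf{Locating $\vartheta_2$.} $T$ carries the eigenvalue $r$, so the sign of $\det(\rho_lI-T)=\prod_i(\rho_l-\vartheta_i)$ says nothing about $\vartheta_2$ until $\vartheta_3$ is controlled. The paper uses Cauchy interlacing with the principal submatrix obtained by deleting the row and column of $S$, which is similar to $L_1\oplus L_2$. Either its second eigenvalue is at least $\rho_l$, whence $\vartheta_2\ge\rho_l$. Or $\vartheta_3<\rho_l$, and then $\det(\rho_lI-T)\ge0$ together with $\rho_l<r$ forces $\vartheta_2\ge\rho_l$.

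\textbf{The degenerate case.} When one side has no excess ($e_2=0$, i.e.\ $p=lr+h$, $B_2=\emptyset$, $G[V_2]\cong K_k$), your five-part partition degenerates. The paper treats this with a four-part quotient $T_0$, showing $\det(\rho_lI-T_0)=\eta(l+\eta)(k-\eta)(l-2+z_1)>0$ and $\lambda_{\max}(L_1)<\rho_l<k-1$. Hence $\lambda_2(T_0)>\rho_l$ \emph{strictly}. Without this, your claim that equality forces $p=q$ is unproved.

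\textbf{The case $l=1$.} Here there are no excess vertices at all, so this case needs its own line.

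\textbf{The test vector.} Your single antisymmetric test vector (same $x,y$ on both halves, non-constant on $S$) is not equivalent to five-part quotient interlacing when $e_1\neq e_2$. Use the quotient instead: its second eigenvalue accounts for all piecewise-constant vectors orthogonal to $\mathbf{1}$.
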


\begin{proof}
For $l=1$, the independence of $S$ gives $|[S,V_1]|+|[S,V_2]|=2r$. Since $|V_i|=r$ and $[V_1,V_2]=\emptyset$, every vertex of $V_i$ has at least one neighbour in $S$. Hence $|[S,V_1]|=|[S,V_2]|=r$, and each $V_i$ induces $K_r$. The quotient matrix of $\{V_1,S,V_2\}$ is
\[
\begin{pmatrix}
r-1&1&0\\[2pt]
r/2&0&r/2\\[2pt]
0&1&r-1
\end{pmatrix},
\]
whose eigenvalues are $r$, $r-1$ and $-1$. Hence the result holds for $l=1$. 

We now assume $l\geq2$. Put $h=l(l-1)$, $p=|[S,V_1]|$ and $q=|[S,V_2]|$. Since $S$ is independent, degree counting gives $p+q=2lr$. Thus, for some $\varepsilon$, we may write $p=lr+\varepsilon$ and $q=lr-\varepsilon$.
Every vertex of $V_i$ has at least $l$ neighbours in $S$, because $|V_i|=k$. Since the minimum possible number of edges from $V_1$ or $V_2$ to $S$ is $lk=lr-h$, we have
\begin{equation}\label{eq:epsilon-range}
|\varepsilon|\leq h.
\end{equation}
Let $e_1=h+\varepsilon$ and $e_2=h-\varepsilon$. Thus $e_i$ is the total number of neighbours in $S$ in excess of $l$, summed over all vertices of $V_i$.

Assume first that $e_1,e_2>0$. For $i=1,2$, let $W_i$ be the set of vertices of $V_i$ having more than $l$ neighbours in $S$, put $u_i=|W_i|$, and set $U_i=V_i\setminus W_i$. Since each vertex of $W_i$ contributes an integer between $1$ and $l$ to $e_i$, we have $1\leq u_i\leq e_i\leq2h=2l(l-1)<k$, where the last inequality follows from $r>2t^2$ and $l\leq t$. Thus both $W_i$ and $U_i$ are nonempty, and in particular $k-u_i>0$. Define $z_i=e_i/u_i$. Then $1\leq z_i\leq l$. Every vertex of $U_i$ has exactly $l$ neighbours in $S$ and therefore $k-1$ neighbours in $V_i$. The quotient matrix corresponding to the partition
$\{U_1,W_1,S,W_2,U_2\}$ is
\begin{equation}\label{eq:T-epsilon}
T=\begin{pmatrix}
k-u_1-1&u_1&l&0&0\\
k-u_1&u_1-z_1-1&l+z_1&0&0\\
\dfrac{k-u_1}{2}&\dfrac{u_1(l+z_1)}{2l}&0&
\dfrac{u_2(l+z_2)}{2l}&\dfrac{k-u_2}{2}\\
0&0&l+z_2&u_2-z_2-1&k-u_2\\
0&0&l&u_2&k-u_2-1
\end{pmatrix}.
\end{equation}
This quotient matrix is diagonally similar to a symmetric matrix. Deleting the row and column corresponding to $S$ leaves a matrix similar to $L_1\oplus L_2$, where
\[
L_i=\begin{pmatrix}
k-u_i-1&u_i\\
k-u_i&u_i-z_i-1
\end{pmatrix}
\]
for $i=1,2$. Each $L_i$ is diagonally similar to a real symmetric matrix and, since $u_i>0$ and $k-u_i>0$, is an irreducible Metzler matrix (that is, its off-diagonal entries are nonnegative). Choose $c>0$ sufficiently large such that $L_i+cI$ is nonnegative. By the Perron--Frobenius theorem and the minimum-row-sum bound,
\[
\lambda_{\max}(L_i)+c
=\lambda_{\max}(L_i+cI)
\geq c+\min\{k-1,k-z_i-1\}
=c+k-z_i-1>c.
\]
Thus $L_i$ has a positive eigenvalue. Since $\det(-I-L_i)=-z_i(k-u_i)<0$, its other eigenvalue is less than $-1$.

Set $a_i=z_i-1\geq0$. Substituting $\rho_l=r-l-\eta$ into the characteristic polynomial of~\eqref{eq:T-epsilon}, and using
$h=\eta(k+1-\eta)$, which follows from~\eqref{eq:eta-equation}, we obtain
\begin{equation}\label{eq:det-refined}
\det(\rho_l I-T)=\frac{(l+\eta)(k-\eta)}{2l}\,B,
\end{equation}
where $B=2\varepsilon^2+(l-\eta)\varepsilon(a_1-a_2)+h(l+\eta)(a_1+a_2)+2\eta l a_1a_2$. By~\eqref{eq:epsilon-range} and $|a_1-a_2|\leq a_1+a_2$,
\begin{align*}
B &\geq 2\varepsilon^2+
\bigl(h(l+\eta)-(l-\eta)|\varepsilon|\bigr)(a_1+a_2)
+2\eta l a_1a_2\\
&\geq 2\varepsilon^2+2\eta h(a_1+a_2)+2\eta l a_1a_2\\
& \geq 0.
\end{align*}
Moreover, equality holds only when
$\varepsilon=0$ and $a_1=a_2=0$. It follows that $z_1=z_2=1$ and $u_1=e_1=u_2=e_2=h$.

Let the eigenvalues of $T$ be $r=\vartheta_1\geq\vartheta_2\geq\cdots\geq\vartheta_5$. If the second largest eigenvalue of $L_1\oplus L_2$ is at least $\rho_l$, interlacing gives $\vartheta_2\geq\rho_l$. Otherwise, interlacing gives $\vartheta_3<\rho_l$. In this case, the last three factors in 
$\det(\rho_l I-T)=\prod_{i=1}^5(\rho_l-\vartheta_i)$ are positive, whereas $\rho_l-r<0$. Equation~\eqref{eq:det-refined} therefore implies $\vartheta_2\geq\rho_l$. By Theorem~\ref{thm:2.2}, the eigenvalues of $T$ interlace those of $G$, and hence $\lambda_2(G)\geq\rho_l$. Furthermore, if $\lambda_2(G)=\rho_l$, then $\lambda_2(G)\geq\vartheta_2\geq\rho_l$ forces $\vartheta_2=\rho_l$. Hence $\det(\rho_l I-T)=0$. Since the prefactor in~\eqref{eq:det-refined} is positive, we must have $B=0$. By the equality condition in the estimate above, this implies $\varepsilon=0$ and $z_1=z_2=1$, and consequently $e_1=e_2=u_1=u_2=h$. Thus, in each $V_i$, exactly $h=l(l-1)$ vertices have $l+1$ neighbours in $S$, while all remaining vertices have $l$ neighbours in $S$.

When $\varepsilon=0$ and $z_1=z_2=1$, the matrix $T$ is invariant under reversal of the five parts. The block corresponding to the symmetric subspace has eigenvalues $r$, $-1$ and $-(l+1)$, whereas the antisymmetric block has characteristic polynomial
$x^2-(r-l-2)x-(2r-l^2-l)$. The larger root of this polynomial is $\rho_l$, and consequently $\lambda_2(T)=\rho_l$.

It remains to consider the case $e_1e_2=0$. By symmetry, suppose that $e_2=0$, so $\varepsilon=h$ and $G[V_2]\cong K_k$. Define $U_1$, $W_1$, $u_1$ and $z_1$ as above. Then $u_1z_1=2h$, and the corresponding four-part quotient matrix is
\[
T_0=\begin{pmatrix}
k-u_1-1 &u_1 &l &0\\
k-u_1 &u_1-z_1-1 &l+z_1 &0\\
\dfrac{k-u_1}{2} &\dfrac{u_1(l+z_1)}{2l} &0 &\dfrac{k}{2}\\
0 &0 &l &k-1
\end{pmatrix}.
\]
Substituting $u_1z_1=2h$, clearing denominators, and reducing powers of $\eta$ by~\eqref{eq:eta-equation}, we obtain
\begin{equation}\label{eq:det-extreme}
\det(\rho_l I-T_0)=\eta(l+\eta)(k-\eta)(l-2+z_1)>0.
\end{equation}
Deleting the row and column corresponding to $S$ leaves $L_1\oplus[k-1]$. Moreover, since $\rho_l=k-1-\eta$, $u_1z_1=2h$ and $h=\eta(k+1-\eta)$, we have $\det(\rho_l I-L_1)=2h-\eta(k+z_1)+\eta^2=\eta(k+2-z_1-\eta)>0$, where the last inequality follows from $z_1\leq l$, $0<\eta<1$ and $r>2l^2$. The argument above shows that $L_1$ has one positive eigenvalue and one eigenvalue less than $-1$. Since $\rho_l>0$, this determinant inequality implies $\lambda_{\max}(L_1)<\rho_l$. Also, $k-1=\rho_l+\eta>\rho_l$. Therefore the second largest eigenvalue of $L_1\oplus[k-1]$ is strictly less than $\rho_l$. Interlacing and the sign in~\eqref{eq:det-extreme} now give $\lambda_2(T_0)>\rho_l$.
Consequently, $\lambda_2(G)\geq\lambda_2(T_0)>\rho_l$, so equality cannot occur in this case. This completes the proof.
\end{proof}

\subsection{Proof of Theorem~\ref{thm:1.1}}
\label{subsec:proof-i}

\begin{proof}[Proof of Theorem~\ref{thm:1.1}]
Let $t \ge 2$ and $r > 2t^2$ be integers. Let $G$ be a connected $r$-regular graph with $\kappa(G)\leq2t$, and let $S_0$ be a minimum vertex cut. If $|S_0|$ is even, put $S=S_0$. If $|S_0|$ is odd, then $|S_0|\leq2t-1<r$, and every component $C$ of $G-S_0$ satisfies $|C|\geq r-|S_0|+1\geq2$.
Choose a vertex from one component of $G-S_0$ and add it to $S_0$. In either case, the resulting set $S$ has even size $|S|=2l\leq2t$, while $G-S$ still has at least two components.

Let $C_1,\ldots,C_s$ be the components of $G-S$, where $s \ge 2$. Choose one component $C_1$ and put $V_1=C_1$ and $V_2=\bigcup_{i=2}^s C_i$.
Set $m_i=|V_i|$, $p=|[S,V_1]|$ and $q=|[S,V_2]|$.
Since $G$ is connected, every component of $G-S$ has a neighbour in $S$, and hence $p,q>0$.
The quotient matrix of the partition $\{V_1,S,V_2\}$ of $V(G)$ is
\[
Q_0=\begin{pmatrix}
r-\dfrac{p}{m_1}&\dfrac{p}{m_1}&0\\[5pt]
\dfrac{p}{2l}&r-\dfrac{p+q}{2l}&\dfrac{q}{2l}\\[5pt] 0&\dfrac{q}{m_2}&r-\dfrac{q}{m_2}
\end{pmatrix}.
\]
Applying Theorem~\ref{thm:2.1} to remove the eigenvalue $r$ yields the matrix $Q_1$ in~\eqref{eq:Q1-general}, while Theorem~\ref{thm:2.2} gives
\begin{equation}\label{eq:lambda-Q1}
 \lambda_2(G)\geq\lambda_1(Q_1).
\end{equation}
Lemma~\ref{lem:2.3}(i) and degree counting give $m_1,m_2\geq r-2l+1$, $p\geq m_1(r-m_1+1)$, $q\geq m_2(r-m_2+1)$ and $p+q\leq2lr$.

\medskip
\noindent\textbf{Claim 1.}
$\lambda_1(Q_1)$ is strictly decreasing in each of $p$ and $q$.

\smallskip
\noindent\emph{Proof.}
Write $\tau=\operatorname{tr}(Q_1)$ and $\Delta=\tau^2-4\det(Q_1)$. Then
$\lambda_1(Q_1)=\frac{\tau+\sqrt\Delta}{2}$.
Differentiating gives $\frac{\partial\tau}{\partial p}=-\frac{m_1+2l}{2lm_1}<0$ and
\[
\left(\frac{\partial\Delta}{\partial p}\right)^2
-4\left(\frac{\partial\tau}{\partial p}\right)^2\Delta
=-\frac{2q^2(2l+m_1+m_2)}{l^3m_1m_2}<0.
\]
If $\partial\Delta/\partial p\leq0$, then the derivative of $(\tau+\sqrt\Delta)/2$ with respect to $p$ is immediately negative. If $\partial\Delta/\partial p>0$, the displayed strict inequality gives $\frac{1}{2\sqrt\Delta}\frac{\partial\Delta}{\partial p}<-\frac{\partial\tau}{\partial p}$, and the derivative is again negative. Thus $\partial\lambda_1(Q_1)/\partial p<0$. The argument for $q$ is identical. \hfill$\square$

\medskip
\noindent\textbf{Claim 2.}
If $\lambda_2(G)<\rho_l$, then $m_1=m_2=k=r-l+1$ and $p+q=2lr$.

\smallskip
\noindent\emph{Proof.}
Assume that $\lambda_2(G)<\rho_l$. By~\eqref{eq:lambda-Q1},
$\lambda_1(Q_1)<\rho_l$. Choose integers $p'\geq p$ and $q'\geq q$ such that
$p'+q'=2lr$, and let $Q_1'$ be obtained from $Q_1$ by replacing $p$ and $q$ with $p'$ and $q'$. Claim~1 gives
$\lambda_1(Q_1')\leq\lambda_1(Q_1)<\rho_l$.
The parameters $m_1$, $m_2$, $p'$ and $q'$ satisfy~\eqref{eq:coarse-admissible}, so Lemma~\ref{lem:3.2} yields $m_1=m_2=k$.

It remains to show that the original values satisfy $p+q=2lr$. Put $w=p+q$. When $m_1=m_2=k$, the larger eigenvalue of $Q_1$ is
\[
\lambda_1(Q_1)=r-\frac{w}{2}\left(\frac1k+\frac1{2l}\right)
 +\frac12\sqrt{\left(\frac1k+\frac1{2l}\right)^2(p-q)^2
 +\frac{pq}{l^2}}.
\]
Since
\[
\left(\frac1k+\frac1{2l}\right)^2(p-q)^2
+\frac{pq}{l^2}-\frac{w^2}{4l^2}
=\left[\left(\frac1k+\frac1{2l}\right)^2-\frac1{4l^2}
 \right](p-q)^2\geq0,
\]
we have
\begin{equation}\label{eq:Q1-total-bound}
\lambda_1(Q_1)\geq r-\frac{w}{2k}.
\end{equation}
If $w\leq2lr-1$, then $\lambda_1(Q_1)\geq r-\frac{2lr-1}{2k}=\left(r-\frac{lr}{k}\right)+\frac1{2k}$.
On the other hand, writing $h=l(l-1)$ and using $h=\eta(k+1-\eta)$, we obtain $\rho_l-\left(r-\frac{lr}{k}\right)=\frac{h}{k}-\eta=\frac{\eta(1-\eta)}{k}<\frac1{2k}$.
Thus $w\leq2lr-1$ would imply $\lambda_1(Q_1)>\rho_l$, a contradiction. Hence $p+q=2lr$. \hfill$\square$

\medskip
By Claim~2, any counterexample with $\lambda_2(G)<\rho_l$ has
$|V_1|=|V_2|=k$ and $p+q=2lr$. The equality $p+q=2lr$ implies that $S$ is an independent set. Lemma~\ref{lem:3.3} then gives
$\lambda_2(G)\geq\rho_l$, a contradiction. Therefore $\lambda_2(G)\geq\rho_l=\rho(r,l)$, where $\rho$ is defined in~\eqref{eq:rho}.
Finally,
\[
\frac{d}{dl}\left(
\frac{r-l-2+\sqrt{(r+2)^2-l(3l+2r)}}{2}\right)
=-\frac12-\frac{r+3l}{2\sqrt{(r+2)^2-l(3l+2r)}}<0.
\]
Since $l\leq t$, it follows from~\eqref{eq:rho} that $\lambda_2(G)\geq\rho(r,t)$.
This proves Theorem~\ref{thm:1.1}.
\end{proof}

\section{Proof of Theorem~\ref{thm:1.2}}
\label{sec:const}

In this section, we first construct a family of connected $r$-regular graphs $G_{r,t}(k_1,k_2)$ for integers $0\leq k_1\leq t$ and $1\leq k_2\leq t$. We then prove that these graphs have vertex connectivity $2t$ and their second largest eigenvalue equals the largest eigenvalue of $M(t-2k_1,t+1-2k_2)$, leading to the proof of Theorem~\ref{thm:1.2}, where $M(\alpha,\beta)$ is as defined in~\eqref{eq:M}.

\subsection{Construction}

\begin{const}\label{const:4.1}
(a) Let $t$ and $r$ be integers satisfying $t\geq2$, $r>2t^2$ and $t\mid(r+1)$, and put
\begin{equation}\label{eq:Frt}
d=r-t^2+1,
\qquad
F_{r,t}=K_d\vee\overline{\tfrac{t^2-t}{2}K_2}.
\end{equation}
Take two vertex-disjoint copies of $F_{r,t}$. In the first copy, denote the vertex set of the $K_d$-part by $U_1$ and the vertex set of the $\overline{\frac{t^2-t}{2}K_2}$-part by $U_2$. In the second copy, denote the vertex sets of the corresponding parts by $U_4$ and $U_3$, respectively. Let $S_1$ and $S_2$ be two disjoint sets of size $t$, disjoint from both copies of $F_{r,t}$, and put $S=S_1\cup S_2$. We take $S$ to be an independent set in the graph being constructed.

Fix integers $k_1$ and $k_2$ with $0\leq k_1\leq t$ and $1\leq k_2\leq t$. For each $i\in\{1,2,3,4\}$ and $j\in\{1,2\}$, add a biregular bipartite graph between $U_i$ and $S_j$. If $d_{ij}$ is the entry in row $U_i$ and column $S_j$ of the following table, then every vertex of $U_i$ has exactly $d_{ij}$ neighbours in $S_j$:
\begin{equation}\label{eq:degree-table}
\begin{array}{c|cc}
 & S_1&S_2\\ \hline
U_1&t-k_1&k_1\\
U_2&t+1-k_2&k_2\\
U_3&k_2&t+1-k_2\\
U_4&k_1&t-k_1
\end{array}
\end{equation}
The degrees from $S_j$ to $U_i$ are determined by edge counting: every vertex of $S_j$ has $|U_i|d_{ij}/t$ neighbours in $U_i$. Since $t\mid d$ and $|U_2|=|U_3|=t(t-1)$, all the prescribed degrees are integers. The bounds on $k_1$ and $k_2$ also ensure that no prescribed degree exceeds the order of the opposite part.

By the Gale--Ryser criterion~\cite{Gale57,Ryser57}, the integrality, degree bounds, and edge-counting identities verified above guarantee that every pair $(U_i,S_j)$ admits the prescribed simple biregular realisation. Choose any such realisations, add no other edges, and denote the resulting graph by $G_{r,t}(k_1,k_2)$. 

(b) Take a pair of integers $(k_1^*,k_2^*)$ with $0\leq k_1^*\leq t$ and $1\leq k_2^* \leq t$ that achieve the minimum in \eqref{eq:rhostar}. Choose a corresponding realisation in the construction above and denote the graph by 
$$
G^*_{r,t}=G_{r,t}(k_1^*,k_2^*). 
$$ 
\end{const}

Figure~\ref{fig1} illustrates the case $(k_1,k_2)=(0,t)$. In this case, every vertex of $U_1$ has all its $t$ neighbours in $S_1$, whereas every vertex of $U_2$ has $t$ of its $t+1$ neighbours in $S_2$.
\begin{figure}[H]
	\centering
	\scalebox{0.8}{\begin{tikzpicture}[x=1.00mm, y=1.00mm, inner xsep=0pt, inner ysep=0pt, outer xsep=0pt, outer ysep=0pt]
			\path[line width=0mm] (59.59,23.09) rectangle +(134.92,71.38);
			\definecolor{L}{rgb}{0,0,0}
			\path[line width=0.15mm, draw=L] (96.24,82.62) [rotate around={180:(96.24,82.62)}] ellipse (16.54mm and 4.65mm);
			\path[line width=0.15mm, draw=L] (157.60,82.42) [rotate around={180:(157.60,82.42)}] ellipse (25.60mm and 4.45mm);
			\definecolor{F}{rgb}{0,0,0}
			\path[line width=0.15mm, draw=L, fill=F] (85.02,82.45) circle (0.50mm);
			\path[line width=0.15mm, draw=L, fill=F] (89.68,82.29) circle (0.50mm);
			\path[line width=0.15mm, draw=L, fill=F] (95.00,82.45) circle (0.30mm);
			\path[line width=0.15mm, draw=L, fill=F] (98.32,82.45) circle (0.30mm);
			\path[line width=0.15mm, draw=L, fill=F] (101.64,82.45) circle (0.30mm);
			\path[line width=0.15mm, draw=L, fill=F] (108.29,82.45) circle (0.50mm);
			\path[line width=0.15mm, draw=L, fill=F] (136.15,82.45) circle (0.50mm);
			\path[line width=0.15mm, draw=L, fill=F] (138.31,82.45) circle (0.30mm);
			\path[line width=0.15mm, draw=L, fill=F] (140.50,82.39) circle (0.30mm);
			\path[line width=0.15mm, draw=L, fill=F] (142.82,82.39) circle (0.30mm);
			\path[line width=0.15mm, draw=L, fill=F] (145.15,82.39) circle (0.50mm);
			\path[line width=0.15mm, draw=L] (165.40,36.02) [rotate around={180:(165.40,36.02)}] ellipse (16.79mm and 4.83mm);
			\path[line width=0.15mm, draw=L, fill=F] (152.76,35.92) circle (0.50mm);
			\path[line width=0.15mm, draw=L, fill=F] (157.42,35.75) circle (0.50mm);
			\path[line width=0.15mm, draw=L, fill=F] (162.74,35.92) circle (0.30mm);
			\path[line width=0.15mm, draw=L, fill=F] (166.06,35.92) circle (0.30mm);
			\path[line width=0.15mm, draw=L, fill=F] (169.38,35.92) circle (0.30mm);
			\path[line width=0.15mm, draw=L, fill=F] (176.03,35.92) circle (0.50mm);
			\draw(87.75,89.77) node[anchor=base west]{\fontsize{9}{11}\selectfont $U_1:K_{r-t^2+1}$};
			\draw(145.13,90.12) node[anchor=base west]{\fontsize{9}{11}\selectfont $U_2:\overline{\frac{t^2-t}{2}K_2}$};
			\path[line width=0.15mm, draw=L] (130.89,35.75) -- (147.98,35.70);
			\path[line width=0.15mm, draw=L, fill=F] (130.89,35.75) -- (132.29,35.40) -- (130.89,35.75) -- (132.29,36.10) -- (130.89,35.75) -- cycle;
			\path[line width=0.15mm, draw=L, fill=F] (147.98,35.70) -- (146.58,36.06) -- (147.98,35.70) -- (146.57,35.36) -- (147.98,35.70) -- cycle;
			\draw(91.86,25.74) node[anchor=base west]{\fontsize{9}{11}\selectfont $U_3:\overline{\frac{t^2-t}{2}K_2}$};
			\definecolor{L}{rgb}{1,0,0}
			\path[line width=0.15mm, draw=L, dash pattern=on 0.15mm off 0.50mm] (76.05,76.64) rectangle +(107.82,12.04);
			\path[line width=0.15mm, draw=L, dash pattern=on 0.15mm off 0.50mm] (75.06,29.69) rectangle +(108.81,12.68);
			\draw(61.59,81.71) node[anchor=base west]{\fontsize{9}{11}\selectfont $r+1-t$};
			\draw(61.59,34.33) node[anchor=base west]{\fontsize{9}{11}\selectfont $r+1-t$};
			\draw(97.70,67.68) node[anchor=base west]{\fontsize{9}{11}\selectfont $S_2:\overline{K_t}$};
			\definecolor{L}{rgb}{0,0,0}
			\path[line width=0.15mm, draw=L] (102.71,59.47) [rotate around={180:(102.71,59.47)}] ellipse (16.54mm and 4.65mm);
			\path[line width=0.15mm, draw=L] (155.10,59.70) [rotate around={180:(155.10,59.70)}] ellipse (16.63mm and 4.42mm);
			\path[line width=0.15mm, draw=L, fill=F] (91.49,59.30) circle (0.50mm);
			\path[line width=0.15mm, draw=L, fill=F] (96.15,59.13) circle (0.50mm);
			\path[line width=0.15mm, draw=L, fill=F] (101.46,59.30) circle (0.30mm);
			\path[line width=0.15mm, draw=L, fill=F] (104.79,59.30) circle (0.30mm);
			\path[line width=0.15mm, draw=L, fill=F] (108.11,59.30) circle (0.30mm);
			\path[line width=0.15mm, draw=L, fill=F] (114.76,59.30) circle (0.50mm);
			\path[line width=0.15mm, draw=L, fill=F] (142.62,59.30) circle (0.50mm);
			\path[line width=0.15mm, draw=L, fill=F] (147.27,59.13) circle (0.50mm);
			\path[line width=0.15mm, draw=L, fill=F] (152.59,59.30) circle (0.30mm);
			\path[line width=0.15mm, draw=L, fill=F] (155.92,59.30) circle (0.30mm);
			\path[line width=0.15mm, draw=L, fill=F] (159.24,59.30) circle (0.30mm);
			\path[line width=0.15mm, draw=L, fill=F] (165.89,59.30) circle (0.50mm);
			\draw(157.95,25.74) node[anchor=base west]{\fontsize{9}{11}\selectfont $U_4:K_{r-t^2+1}$};
			\path[line width=0.15mm, draw=L, fill=F] (149.44,82.45) circle (0.50mm);
			\path[line width=0.15mm, draw=L, fill=F] (151.60,82.45) circle (0.30mm);
			\path[line width=0.15mm, draw=L, fill=F] (153.79,82.39) circle (0.30mm);
			\path[line width=0.15mm, draw=L, fill=F] (156.12,82.39) circle (0.30mm);
			\path[line width=0.15mm, draw=L, fill=F] (158.44,82.39) circle (0.50mm);
			\path[line width=0.15mm, draw=L, fill=F] (169.38,82.45) circle (0.50mm);
			\path[line width=0.15mm, draw=L, fill=F] (161.57,82.45) circle (0.30mm);
			\path[line width=0.15mm, draw=L, fill=F] (163.76,82.39) circle (0.30mm);
			\path[line width=0.15mm, draw=L, fill=F] (166.08,82.39) circle (0.30mm);
			\path[line width=0.15mm, draw=L, fill=F] (178.38,82.39) circle (0.50mm);
			\definecolor{L}{rgb}{0,0,0.545}
			\path[line width=0.15mm, draw=L, dash pattern=on 0.15mm off 0.50mm] (140.83,82.39) ellipse (6.48mm and 2.16mm);
			\path[line width=0.15mm, draw=L, dash pattern=on 0.15mm off 0.50mm] (154.21,82.64) ellipse (6.06mm and 2.24mm);
			\path[line width=0.15mm, draw=L, dash pattern=on 0.15mm off 0.50mm] (173.98,82.64) ellipse (6.73mm and 2.24mm);
			\definecolor{L}{rgb}{0,0,0}
			\path[line width=0.15mm, draw=L, fill=F] (171.54,82.45) circle (0.30mm);
			\path[line width=0.15mm, draw=L, fill=F] (173.73,82.39) circle (0.30mm);
			\path[line width=0.15mm, draw=L, fill=F] (176.05,82.39) circle (0.30mm);
			\path[line width=0.15mm, draw=L] (114.27,82.27) -- (131.36,82.22);
			\path[line width=0.15mm, draw=L, fill=F] (114.27,82.27) -- (115.67,81.92) -- (114.27,82.27) -- (115.67,82.62) -- (114.27,82.27) -- cycle;
			\path[line width=0.15mm, draw=L, fill=F] (131.36,82.22) -- (129.96,82.57) -- (131.36,82.22) -- (129.96,81.87) -- (131.36,82.22) -- cycle;
			\path[line width=0.15mm, draw=L] (104.43,35.90) [rotate around={180:(104.43,35.90)}] ellipse (25.60mm and 4.45mm);
			\path[line width=0.15mm, draw=L, fill=F] (82.99,35.93) circle (0.50mm);
			\path[line width=0.15mm, draw=L, fill=F] (85.15,35.93) circle (0.30mm);
			\path[line width=0.15mm, draw=L, fill=F] (87.34,35.87) circle (0.30mm);
			\path[line width=0.15mm, draw=L, fill=F] (89.66,35.87) circle (0.30mm);
			\path[line width=0.15mm, draw=L, fill=F] (91.99,35.87) circle (0.50mm);
			\path[line width=0.15mm, draw=L, fill=F] (96.28,35.93) circle (0.50mm);
			\path[line width=0.15mm, draw=L, fill=F] (98.44,35.93) circle (0.30mm);
			\path[line width=0.15mm, draw=L, fill=F] (100.63,35.87) circle (0.30mm);
			\path[line width=0.15mm, draw=L, fill=F] (102.95,35.87) circle (0.30mm);
			\path[line width=0.15mm, draw=L, fill=F] (105.28,35.87) circle (0.50mm);
			\path[line width=0.15mm, draw=L, fill=F] (116.22,35.93) circle (0.50mm);
			\path[line width=0.15mm, draw=L, fill=F] (108.41,35.93) circle (0.30mm);
			\path[line width=0.15mm, draw=L, fill=F] (110.60,35.87) circle (0.30mm);
			\path[line width=0.15mm, draw=L, fill=F] (112.92,35.87) circle (0.30mm);
			\path[line width=0.15mm, draw=L, fill=F] (125.21,35.87) circle (0.50mm);
			\definecolor{L}{rgb}{0,0.392,0}
			\path[line width=0.15mm, draw=L, dash pattern=on 0.15mm off 0.50mm] (87.67,35.87) ellipse (6.48mm and 2.16mm);
			\path[line width=0.15mm, draw=L, dash pattern=on 0.15mm off 0.50mm] (101.04,36.12) ellipse (6.06mm and 2.24mm);
			\path[line width=0.15mm, draw=L, dash pattern=on 0.15mm off 0.50mm] (120.81,36.12) ellipse (6.73mm and 2.24mm);
			\definecolor{L}{rgb}{0,0,0}
			\path[line width=0.15mm, draw=L, fill=F] (118.38,35.93) circle (0.30mm);
			\path[line width=0.15mm, draw=L, fill=F] (120.56,35.87) circle (0.30mm);
			\path[line width=0.15mm, draw=L, fill=F] (122.89,35.87) circle (0.30mm);
			\definecolor{L}{rgb}{0,0,0.545}
			\path[line width=0.15mm, draw=L] (136.18,82.39) -- (142.66,59.13);
			\path[line width=0.15mm, draw=L] (145.15,82.22) -- (142.66,58.96);
			\path[line width=0.15mm, draw=L] (149.64,82.22) -- (147.31,58.96);
			\path[line width=0.15mm, draw=L] (158.61,82.22) -- (147.31,59.13);
			\path[line width=0.15mm, draw=L] (169.41,82.39) -- (165.75,59.13);
			\path[line width=0.15mm, draw=L] (178.38,82.55) -- (165.75,58.96);
			\definecolor{L}{rgb}{0,0.392,0}
			\path[line width=0.15mm, draw=L] (91.66,59.46) -- (83.18,36.20);
			\path[line width=0.15mm, draw=L] (91.49,59.13) -- (91.99,36.04);
			\path[line width=0.15mm, draw=L] (96.14,58.80) -- (96.14,36.20);
			\path[line width=0.15mm, draw=L] (96.14,59.13) -- (105.44,35.87);
			\path[line width=0.15mm, draw=L] (114.92,59.29) -- (116.24,36.20);
			\path[line width=0.15mm, draw=L] (114.92,59.29) -- (125.38,36.20);
			\path[line width=0.15mm, draw=L] (119.33,59.90) -- (131.95,81.49);
			\definecolor{F}{rgb}{0,0.392,0}
			\path[line width=0.15mm, draw=L, fill=F] (119.33,59.90) -- (120.34,60.93) -- (119.33,59.90) -- (119.73,61.28) -- (119.33,59.90) -- cycle;
			\path[line width=0.15mm, draw=L, fill=F] (131.95,81.49) -- (130.94,80.45) -- (131.95,81.49) -- (131.55,80.10) -- (131.95,81.49) -- cycle;
			\definecolor{L}{rgb}{0,0,0.545}
			\path[line width=0.15mm, draw=L] (138.26,59.07) -- (130.46,36.48);
			\definecolor{F}{rgb}{0,0,0.545}
			\path[line width=0.15mm, draw=L, fill=F] (138.26,59.07) -- (137.47,57.86) -- (138.26,59.07) -- (138.13,57.63) -- (138.26,59.07) -- cycle;
			\path[line width=0.15mm, draw=L, fill=F] (130.46,36.48) -- (131.25,37.69) -- (130.46,36.48) -- (130.58,37.92) -- (130.46,36.48) -- cycle;
			\draw(62.12,59.07) node[anchor=base west]{\fontsize{9}{11}\selectfont $2t$};
			\draw(150.09,48.96) node[anchor=base west]{\fontsize{9}{11}\selectfont $S_1:\overline{K_t}$};
			\draw(138.32,77.98) node[anchor=base west]{\fontsize{7}{8}\selectfont $t-1$};
			\draw(152.18,77.98) node[anchor=base west]{\fontsize{7}{8}\selectfont $t-1$};
			\draw(172.02,77.98) node[anchor=base west]{\fontsize{7}{8}\selectfont $t-1$};
			\draw(85.34,39.03) node[anchor=base west]{\fontsize{7}{8}\selectfont $t-1$};
			\draw(100.32,39.03) node[anchor=base west]{\fontsize{7}{8}\selectfont $t-1$};
			\draw(119.04,39.03) node[anchor=base west]{\fontsize{7}{8}\selectfont $t-1$};
			\definecolor{L}{rgb}{0.098,0.098,0.439}
			\path[line width=0.15mm, draw=L] (138.24,60.62) -- (112.78,81.18);
			\definecolor{F}{rgb}{0.098,0.098,0.439}
			\path[line width=0.15mm, draw=L, fill=F] (138.24,60.62) -- (137.37,61.78) -- (138.24,60.62) -- (136.93,61.23) -- (138.24,60.62) -- cycle;
			\path[line width=0.15mm, draw=L, fill=F] (112.78,81.18) -- (113.65,80.03) -- (112.78,81.18) -- (114.09,80.58) -- (112.78,81.18) -- cycle;
			\definecolor{L}{rgb}{0.333,0.42,0.184}
			\path[line width=0.15mm, draw=L] (119.00,58.74) -- (148.05,36.10);
			\definecolor{F}{rgb}{0.333,0.42,0.184}
			\path[line width=0.15mm, draw=L, fill=F] (119.00,58.74) -- (119.89,57.60) -- (119.00,58.74) -- (120.32,58.15) -- (119.00,58.74) -- cycle;
			\path[line width=0.15mm, draw=L, fill=F] (148.05,36.10) -- (147.16,37.24) -- (148.05,36.10) -- (146.73,36.69) -- (148.05,36.10) -- cycle;
			\definecolor{L}{rgb}{1,0,0}
			\path[line width=0.15mm, draw=L, dash pattern=on 0.15mm off 0.50mm] (81.90,53.25) rectangle +(95.39,12.28);
	\end{tikzpicture}}%
	\caption{The graph $G_{r,t}(0,t)$. The two copies of $F_{r,t}$ are joined only through the vertex cut $S_1\cup S_2$ of size $2t$; the labels indicate the prescribed local degrees.}
	\label{fig1}
\end{figure}

The graph $G_{r,t}(k_1,k_2)$ is not uniquely determined, since the required biregular bipartite graphs may have multiple realisations. Nevertheless, every such graph admits the equitable partition
\[
\{U_1,U_2,S_1,S_2,U_3,U_4\}.
\]
With rows and columns ordered as $(U_1,U_2,S_1,S_2,U_3,U_4)$, the quotient matrix of this partition is
{
\small
\begin{equation}\label{eq:Q6}
Q_{k_1,k_2}=
\setlength{\arraycolsep}{2pt}
\begin{pmatrix}
d-1 & t^{2}-t & t-k_{1} & k_{1} & 0 & 0\\
d & t^{2}-t-2 & t+1-k_{2} & k_{2} & 0 & 0\\
\frac{(t-k_{1})d}{t} & (t+1-k_{2})(t-1) & 0 & 0 & k_{2}(t-1) & \frac{k_{1}d}{t}\\
\frac{k_{1}d}{t} & k_{2}(t-1) & 0 & 0 & (t+1-k_{2})(t-1) & \frac{(t-k_{1})d}{t}\\
0 & 0 & k_{2} & t+1-k_{2} & t^{2}-t-2 & d\\
0 & 0 & k_{1} & t-k_{1} & t^{2}-t & d-1
\end{pmatrix}.
\end{equation}
}
The matrix $Q_{k_1,k_2}$ is invariant under reversal of the six parts. Consequently, the reversal-symmetric and reversal-antisymmetric subspaces are invariant under $Q_{k_1,k_2}$. Vectors in these two subspaces have the forms $(x,y,z,z,y,x)^{\top}$ and $(x,y,z,-z,-y,-x)^{\top}$, respectively.

For convenience, put 
\begin{equation}
\label{eq:atog}
a=d-1=r-t^2,\; b=t(t-1),\; e=t^2-t-2,\; f=\frac{d}{t},\; g=t-1
\end{equation}
and
$$
\alpha=t-2k_1,\; \beta=t+1-2k_2.
$$
\begin{samepage}
The restriction of $Q_{k_1,k_2}$ to the symmetric subspace is represented by
\begin{equation}
\label{eq:Msym}
M_{\mathrm{sym}}=
\begin{pmatrix}
a & b & t\\
d & e & t+1\\
d & t^2-1 & 0
\end{pmatrix}.
\end{equation}
\end{samepage}
The restriction to the antisymmetric subspace is represented by the matrix $M(\alpha,\beta)$ defined in~\eqref{eq:M}.
It follows that $\operatorname{spec}(Q_{k_1,k_2})=\operatorname{spec}(M_{\mathrm{sym}})\cup\operatorname{spec}(M(\alpha,\beta))$. A direct calculation gives $\det(xI-M_{\mathrm{sym}})=(x-r)(x+1)(x+t+1)$. Thus the eigenvalues of $M_{\mathrm{sym}}$ are $r$, $-1$, and $-(t+1)$, independently of $(k_1,k_2)$.

We next record two useful properties of the antisymmetric block. First, since $fb=\frac{d}{t}\,t(t-1)=d(t-1)=gd$, the positive diagonal matrix
\begin{equation}
\label{eq:H}
H=\operatorname{diag}(f,g,1)
\end{equation} 
satisfies $HM(\alpha,\beta)=M(\alpha,\beta)^{\top}H$. Consequently, $H^{1/2}M(\alpha,\beta)H^{-1/2}$ is real symmetric. Hence $M(\alpha,\beta)$ is diagonally similar to a real symmetric matrix and therefore has only real eigenvalues.

Second, let $\psi_{r,t}$ be the characteristic polynomial of the upper-left $2\times2$ principal submatrix of $M(\alpha,\beta)$. Then
\begin{eqnarray}
\psi_{r,t}(x) &= & x^2-(a+e)x+(ae-bd) \nonumber \\
&=& x^2-\bigl[(r-t^2)+(t^2-t-2)\bigr]x +(r-t^2)(t^2-t-2)-t(t-1)(r-t^2+1) \nonumber \\
&=& x^2-(r-t-2)x-(2r-t^2-t). \label{eq:psi}
\end{eqnarray}
In particular, $\det\bigl(xI-M(0,0)\bigr)=x\psi_{r,t}(x)$. The discriminant of $\psi_{r,t}$ is
\[
\begin{aligned}
\Delta_{\psi_{r,t}}
=(r-t-2)^2+4(2r-t^2-t)=(r+2)^2-t(3t+2r)=(r-t+2)^2-4t(t-1).
\end{aligned}
\]
Moreover, since $ae-bd=-(2r-t^2-t)<0$, the two roots of $\psi_{r,t}$ have opposite signs. It is easy to see that the larger root of $\psi_{r,t}$ is $\rho(r,t)$ defined in \eqref{eq:rho}.

\subsection{Proof of Theorem~\ref{thm:1.2}}

\begin{lem}\label{pro:4.1}
Let $t$ and $r$ be integers such that $t\geq2$, $r>2t^2$ and $t\mid(r+1)$. Then for all integers $k_1,k_2$ with $0\leq k_1\leq t$ and $1\leq k_2\leq t$, $G_{r,t}(k_1,k_2)$ is an $r$-regular graph with order $2(r+1)$ and vertex connectivity $\kappa\bigl(G_{r,t}(k_1,k_2)\bigr)=2t$.
\end{lem}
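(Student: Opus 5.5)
The plan has three parts: count vertices, check regularity row by row in the degree table~\eqref{eq:degree-table}, and then prove $\kappa=2t$ by an upper bound from the cut $S$ and a lower bound from a Menger/neighbourhood argument.

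\emph{Order.} Each copy of $F_{r,t}$ has $d+t(t-1)=r-t+1$ vertices, and $|S|=2t$. So the order is $2(r-t+1)+2t=2(r+1)$.

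\emph{Regularity.} Consider first a vertex of $U_1$. It has $d-1$ neighbours in $K_d$, all $t^2-t$ vertices of $U_2$ (from the join), and $t$ neighbours in $S$. The total is $r-t^2+t^2-t+t=r$.

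A vertex of $U_2$ has $d$ neighbours in $U_1$, $t^2-t-2$ neighbours inside the cocktail party graph $\overline{\frac{t^2-t}{2}K_2}$, and $t+1$ neighbours in $S$. The total is $r-t^2+1+t^2-t-2+t+1=r$.

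$U_3$ and $U_4$ are symmetric to these.

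A vertex of $S_1$ has degree $\frac{d(t-k_1)}{t}+(t-1)(t+1-k_2)+(t-1)k_2+\frac{dk_1}{t}$. This simplifies to $d+(t-1)(t+1)=r-t^2+1+t^2-1=r$. The same computation works for $S_2$; one can read it off from the row sums of~\eqref{eq:Q6}.

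The integrality and feasibility of the biregular bipartite pieces were already settled in Construction~\ref{const:4.1}.

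\emph{Connectivity, upper bound.} Since $S$ is independent and no edges join the two copies of $F_{r,t}$, removing $S$ separates $U_1\cup U_2$ from $U_3\cup U_4$. Hence $\kappa\le 2t$.

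\emph{Connectivity, lower bound.} Suppose $X$ is a vertex cut with $|X|\le 2t-1$. I will show $G-X$ is connected, in three steps.

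First, each side $U_1\cup U_2$ minus $X$ stays connected. $U_1$ is a clique of size $d>t^2\ge 2t$, so it survives. Every vertex of $U_2$ is adjacent to all of $U_1$, so $U_1\cup U_2$ is a join containing a large clique, and removing fewer than $2t<d$ vertices cannot disconnect it. The same holds for $U_3\cup U_4$.

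Second, each surviving vertex $s\in S\setminus X$ is attached to one of the two sides. Its degree is $r$, all of its neighbours lie in $U_1\cup\cdots\cup U_4$, and $r>2t-1$. So some neighbour of $s$ survives, and $s$ lies in the component of that side.

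Third, the two sides remain linked. Every vertex of $U_1$ has $t$ neighbours in $S$ and every vertex of $U_4$ has $t$ neighbours in $S$, so I need some surviving $s\in S$ with surviving neighbours on both sides. Here I use $k_2\ge1$ and $t+1-k_2\ge1$. Every vertex of $U_2$ has neighbours in both $S_1$ and $S_2$, and likewise every vertex of $U_3$. Each $s\in S_j$ has $(t-1)k_2\ge t-1$ or $(t-1)(t+1-k_2)\ge t-1$ neighbours in $U_2$, and similarly in $U_3$. It also has $dk_1/t$ or $d(t-k_1)/t$ neighbours in $U_1$ and $U_4$. So each $s$ has at least about $2(t-1)+d\cdot\frac{t}{t}=2t-2+d$ neighbours split between the two sides. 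Counting more carefully, $s$ has at least $t-1$ neighbours in $U_2$ and at least $t-1$ in $U_3$, plus $d$ neighbours spread over $U_1\cup U_4$.

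\emph{Counting step.} Let $x=|X\cap S|$. Then at least $2t-x\ge1$ vertices of $S$ survive, and at most $2t-1-x$ vertices of $X$ lie outside $S$. For a surviving $s$ to fail to link the two sides, $X$ must cover all of $s$'s neighbours on one side. That needs at least $\min\{(t-1)+\min(\text{its }U_1\text{ or }U_4\text{ degree}),\dots\}$ vertices of $X$, which is essentially at least $t-1$ plus something. Doing this for all $2t-x$ survivors requires about $(2t-x)(t-1)$ vertices when their neighbourhoods are disjoint. Disjointness can fail, so instead I will bound using a single survivor and the fact that a side-neighbourhood of $s$ has size at least $t-1+$ (its $U_1$ or $U_4$ part).

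\emph{Main obstacle.} The bottleneck case is $k_1\in\{0,t\}$ with $s$ having only $t-1$ neighbours on one side. Then $X\cap S$ plus those $t-1$ vertices could make about $2t-1$, so a single survivor does not suffice. In that case I will use the $t$ vertices of $S_1$ or $S_2$ together and the biregularity. Let $N$ be the union of the $U_3$-neighbourhoods of the survivors in $S_j$. Its size satisfies $|N|\,(t+1-k_2)\ge (\#\text{survivors})(t-1)(t+1-k_2)$, which gives $|N|\ge(t-1)\cdot\#\text{survivors}$. The total count then exceeds $2t-1$ unless exactly one vertex of $S$ survives. In that last case, its neighbourhood on each side has size at least $t-1\ge 1$, and $X$ has only $t-1$ vertices outside $S$; then the $d$ neighbours in $U_1\cup U_4$ settle it.

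Alternatively, the whole lower bound can be obtained via Menger, exhibiting $2t$ internally disjoint paths between any two vertices. I will fall back on that if the counting becomes messy.
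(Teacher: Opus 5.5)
Your order and regularity computations, the upper bound $\kappa\le 2t$, and the first two steps of your lower bound are correct. Your observation that a surviving vertex of the clique $U_1$ dominates $(U_1\cup U_2)\setminus X$ is in fact a simpler way to see that each side stays connected than computing $\kappa(F_{r,t})$ exactly, as the paper does.

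The gap is in the third step, which is where the real content lies. All edges between the two sides pass through the independent set $S$. So $G-X$ is disconnected exactly when every $s\in S\setminus X$ has its entire left neighbourhood or its entire right neighbourhood inside $X$. This splits $S\setminus X$ into sets $B_L$ and $B_R$. What must be shown is $|X\cap V_L|\ge|B_L|$ and $|X\cap V_R|\ge|B_R|$, where $V_L=U_1\cup U_2$ and $V_R=U_3\cup U_4$. After that, $|X|\ge(2t-|S\setminus X|)+|B_L|+|B_R|=2t$.

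Your biregular count only treats survivors from a single $S_j$ that are all blocked on the same side. You never assemble left-blocked and right-blocked survivors into per-side bounds. More importantly, you say nothing when left-blocked survivors come from both $S_1$ and $S_2$. Every vertex of $U_2$ has neighbours in both $S_1$ and $S_2$, so their $U_2$-neighbourhoods can overlap. The count through $U_2$ then gives only $|X\cap U_2|\ge(t-1)\max\{b_1,b_2\}$, where $b_j=|B_L\cap S_j|$. For $t=2$ and $b_1=b_2=2$ (i.e.\ $X\cap S=\emptyset$ and all of $S$ blocked on the left), this is only $2<4$. No contradiction with $|X|\le 3$ follows.

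The missing ingredient is to use $U_1$ as well. If $b_1b_2>0$, take a left-blocked vertex of $S_1$ and one of $S_2$. They have $d(t-k_1)/t$ and $dk_1/t$ neighbours in $U_1$, all lying in $X$, so $|X\cap U_1|\ge d/2\ge t$. Then $(t-1)\max\{b_1,b_2\}+t\,\mathbf 1_{\{b_1b_2>0\}}\ge b_1+b_2$ for all $0\le b_1,b_2\le t$. For $t\ge3$ the $U_2$ term alone already suffices, since $(t-1)\max\{b_1,b_2\}\ge 2\max\{b_1,b_2\}$; it is $t=2$ that needs $U_1$. This is how the paper closes the argument.

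As written, your claim that the total count exceeds $2t-1$ ``unless exactly one vertex of $S$ survives'' is unproved in these mixed cases. In the one-survivor case, $X$ has no vertices outside $S$ (not $t-1$), so that case is trivial. The appeal to Menger is not an argument. So the lower bound $\kappa\ge 2t$ is not yet established.
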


\begin{proof}
Write $G=G_{r,t}(k_1,k_2)$. By Construction~\ref{const:4.1}, $G$ has $2\bigl(d+t(t-1)\bigr)+2t=2(r+1)$ vertices. Every vertex in $U_1\cup U_4$ has degree $(r-t)+t=r$, every vertex in $U_2\cup U_3$ has degree $(r-t-1)+(t+1)=r$, and every vertex in $S_1\cup S_2$ has degree $d+(t-1)(t+1)=r$. Hence $G$ is $r$-regular.

Set $V_L=U_1\cup U_2$, $V_R=U_3\cup U_4$ and $S=S_1\cup S_2$. Recall from Construction~\ref{const:4.1} that $V_L$ and $V_R$ are the vertex sets of the two copies of $F_{r,t}$ and that there are no edges between them. Both $G[V_L]$ and $G[V_R]$ are connected. Moreover, by~\eqref{eq:degree-table} and $1\leq k_2\leq t$, every vertex of $S$ has at least $t-1>0$ neighbours in each of $V_L$ and $V_R$. Hence $G$ is connected. Since $|S|=2t$, deleting $S$ disconnects $G$ into the two nontrivial subgraphs $G[V_L]$ and $G[V_R]$. Thus, $\kappa(G)\leq2t$.

It remains to prove that no vertex cut of $G$ with size smaller than $2t$ exists. We first determine the vertex connectivity of the two copies of $F_{r,t}$. Set $m=t^2-t$. Then
$G[V_L]\cong G[V_R]\cong K_d\vee\overline{\tfrac m2K_2}$.
Let $Y$ be a set of fewer than $d+m-2$ vertices in one of these two graphs. If a vertex of the $K_d$-part remains, then it is adjacent to every other remaining vertex, and hence the remaining graph is connected. Suppose that the entire $K_d$-part is contained in $Y$. This case cannot occur when $m=2$. If $m\geq4$, then at least three vertices remain in the $\overline{\frac m2K_2}$-part. Any two nonadjacent remaining vertices are the end-vertices of one of the deleted matching edges and have every third remaining vertex as a common neighbour. Thus the remaining graph is again connected. On the other hand, deleting the $K_d$-part together with all but the two end-vertices of one deleted matching edge disconnects the graph. Therefore,
$\kappa(G[V_L])=\kappa(G[V_R])=d+m-2=r-t-1\geq2t$,
where the last inequality follows from $r>2t^2$ and $t\geq2$.

Suppose for a contradiction that there exists a vertex cut $X$ of $G$ such that $|X|<2t$. Then the two graphs $G[V_L\setminus X]$ and $G[V_R\setminus X]$ are nonempty and connected. For each $s\in S$, set $N_L(s)=N_G(s)\cap V_L$ and $N_R(s)=N_G(s)\cap V_R$. If some vertex of $S\setminus X$ has a neighbour in each of $V_L\setminus X$ and $V_R\setminus X$, then it joins the two connected graphs $G[V_L\setminus X]$ and $G[V_R\setminus X]$. Moreover, since $S$ is independent and every vertex of $S$ has degree $r>|X|$, every other vertex of $S\setminus X$ has a neighbour outside $X$. It would follow that $G-X$ is connected, contrary to the choice of $X$. Hence no vertex of $S\setminus X$ has a neighbour in both $V_L\setminus X$ and $V_R\setminus X$.

Define $B_L=\{s\in S\setminus X:N_L(s)\subseteq X\}$ and
$B_R=\{s\in S\setminus X:N_R(s)\subseteq X\}$. Since every vertex of $S\setminus X$ has a neighbour outside $X$ on exactly one of the two sides, $S\setminus X$ is the disjoint union of $B_L$ and $B_R$. Set $b_i=|B_L\cap S_i|$ for $i=1,2$. By Construction~\ref{const:4.1}, every vertex of $B_L\cap S_1$ has $(t-1)(t+1-k_2)$ neighbours in $U_2$, whereas every vertex of $U_2$ has $t+1-k_2$ neighbours in $S_1$. Since the entire left neighbourhood of each vertex in $B_L$ lies in $X$, counting the edges between $B_L\cap S_1$ and $U_2$ gives $|X\cap U_2|\geq(t-1)b_1$. Similarly, counting the edges between $B_L\cap S_2$ and $U_2$ gives $|X\cap U_2|\geq(t-1)b_2$. Consequently,
$|X\cap U_2|\geq(t-1)\max\{b_1,b_2\}$.

If $b_1b_2>0$, choose $s_i\in B_L\cap S_i$ for $i=1,2$. The sizes of their neighbourhoods in $U_1$ are $d(t-k_1)/t$ and $dk_1/t$, respectively, and both neighbourhoods are contained in $X$. Therefore, $|X\cap U_1|\geq\max\left\{\frac{d(t-k_1)}t,\frac{dk_1}t\right\}\geq\frac d2\geq t$.
Here $d/2\geq t$ because $t\mid d$ and $d>t+1$, and hence $d\geq2t$. It follows that
\[
|X\cap V_L|\geq(t-1)\max\{b_1,b_2\}
+t\mathbf{1}_{\{b_1b_2>0\}}
\geq b_1+b_2=|B_L|,
\]
where the second inequality follows from $0\leq b_1,b_2\leq t$. By the same argument on the right-hand side, $|X\cap V_R|\geq|B_R|$.

Set $q=|S\setminus X|$. Since $|X\cap S|=2t-q$ and $|B_L|+|B_R|=q$, we obtain $|X|\geq(2t-q)+|B_L|+|B_R|=2t$,
which contradicts $|X|<2t$. This contradiction shows that no vertex cut of $G$ with size smaller than $2t$ exists. Thus, $\kappa(G)\geq2t$, and therefore $\kappa(G)=2t$.
\end{proof}

Before determining the second largest eigenvalue of $G_{r,t}(k_1,k_2)$, we establish two estimates for $M(\alpha,\beta)$. The first gives the lower bound $\rho(r,t)$, and the second separates this bound from the eigenvalues arising outside the quotient subspace. We retain the notation $a,b,d,e,f,g$ and $\psi_{r,t}(x)=x^2-(a+e)x+(ae-bd)$ introduced in \eqref{eq:atog} and \eqref{eq:psi}, respectively. Recall that the larger root of $\psi_{r,t}$ is $\rho(r,t)$. Part (ii) of the following lemma implies that $\rho^*(r,t)\geq\rho(r,t)$, where $\rho^*(r,t)$ is defined in~\eqref{eq:rhostar}.

\begin{lem}\label{lem:4.2}
Let $t$ and $r$ be integers such that $t\geq2$ and $r>2t^2$, and, for real $\alpha$ and $\beta$, set $\Phi_x(\alpha,\beta)=f(x-e)\,\alpha^2+2dg\,\alpha\beta+g(x-a)\,\beta^2$.
\begin{itemize}
\item[{\rm(i)}]
$\det\bigl(xI-M(\alpha,\beta)\bigr)=x\,\psi_{r,t}(x)-\Phi_x(\alpha,\beta)$. Moreover, $\lambda_{\max}\bigl(M(\alpha,\beta)\bigr)\geq\rho(r,t)$ for all real $\alpha,\beta$, with equality at $(\alpha,\beta)=(0,0)$.

\item[{\rm(ii)}] $\rho(r,t)>\sqrt{r(t+1)}$.
\end{itemize}
\end{lem}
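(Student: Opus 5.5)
The plan is to treat $M(\alpha,\beta)$ as a bordered matrix. Write its leading $2\times2$ block as $A=\begin{pmatrix} a&b\\ d&e\end{pmatrix}$, its last column as $u=(\alpha,\beta)^{\top}$, and its last row as $(v^{\top},0)$ with $v=(f\alpha,g\beta)^{\top}$. By \eqref{eq:atog}, $\frac{r-t^2+1}{t}=f$ and $t-1=g$, so $v$ has this form.

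\emph{Part (i), determinant identity.} The Schur-complement (cofactor) expansion along the last row and column gives
\[
\det(xI-M(\alpha,\beta))=x\det(xI-A)-v^{\top}\operatorname{adj}(xI-A)\,u,
\qquad
\operatorname{adj}(xI-A)=\begin{pmatrix} x-e&b\\ d&x-a\end{pmatrix}.
\]
Multiplying out gives $v^{\top}\operatorname{adj}(xI-A)u=f(x-e)\alpha^2+(fb+gd)\alpha\beta+g(x-a)\beta^2$. The identity $fb=gd$ recorded before \eqref{eq:H} turns the middle coefficient into $2dg$. Since $\det(xI-A)=\psi_{r,t}(x)$ by \eqref{eq:psi}, this yields $\det(xI-M(\alpha,\beta))=x\psi_{r,t}(x)-\Phi_x(\alpha,\beta)$.

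\emph{Part (i), lower bound.} Evaluate at $x=\rho:=\rho(r,t)$, where $\psi_{r,t}(\rho)=0$, so $\det(\rho I-M)=-\Phi_\rho(\alpha,\beta)$. The main step is to show that the binary quadratic form $\Phi_\rho$ is positive semidefinite.

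\begin{itemize}
\item The diagonal coefficients are nonnegative provided $\rho>a$ and $\rho>e$. This holds because $\rho$ is roughly $r-t$, whereas $a=r-t^2$ and $e=t^2-t-2$ are smaller under $r>2t^2$. One can check it quickly: $\psi_{r,t}(a)=-bd<0$ and $\psi_{r,t}(e)=-bd<0$, so both $a$ and $e$ lie strictly between the roots of $\psi_{r,t}$.
\item For the discriminant, $fb=gd$ gives $(dg)^2=fg\cdot bd$. Moreover $(\rho-a)(\rho-e)-bd=\psi_{r,t}(\rho)=0$. Hence $(dg)^2=fg(\rho-a)(\rho-e)$ exactly, so $\Phi_\rho$ is a rank-one positive semidefinite form.
\end{itemize}

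Therefore $\det(\rho I-M)\le 0$. Since $M(\alpha,\beta)$ has only real eigenvalues and $\det(xI-M)\to+\infty$ as $x\to\infty$, there are two cases. If the determinant is negative, some eigenvalue exceeds $\rho$. If it vanishes, $\rho$ is itself an eigenvalue. Either way $\lambda_{\max}\ge\rho$.

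At $(\alpha,\beta)=(0,0)$ the characteristic polynomial is $x\psi_{r,t}(x)$, with roots $0$, $\rho$, and the negative root of $\psi_{r,t}$. Since $\rho>0$ (the roots of $\psi_{r,t}$ have opposite signs), $\lambda_{\max}=\rho$.

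\emph{Part (ii).} Put $x_0=\sqrt{r(t+1)}>0$. Because $\psi_{r,t}$ has one negative and one positive root, it suffices to show $\psi_{r,t}(x_0)<0$, i.e.
\[
(r-t-2)\,x_0>r(t-1)+t^2+t .
\]
First, $r\ge 2t^2+1\ge 2t+4$ for $t\ge2$, so $r-t-2\ge r/2$, and it suffices to show $x_0>2(t-1)+2(t^2+t)/r$. Next, $r>2t^2$ gives $(t^2+t)/r<1$, so the right-hand side is less than $2t$. Finally, $x_0>\sqrt{2t^2(t+1)}=t\sqrt{2(t+1)}\ge t\sqrt6>2t$, which completes part (ii).

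The only step that needs real care is the positive-semidefiniteness of $\Phi_\rho$. I expect it to go through because the identity $fb=gd$ turns the discriminant condition into exactly $\psi_{r,t}(\rho)=0$.
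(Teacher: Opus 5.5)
Your proposal is correct. Part (i) follows the paper's route, and part (ii) takes a genuinely different and simpler route.

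\textbf{Part (i).} The paper expands the determinant and writes
$\Phi_{\rho_0}=g(\rho_0-a)\bigl(\beta+\tfrac{d}{\rho_0-a}\alpha\bigr)^2$.
This factorisation contains exactly your two observations:
\begin{itemize}
\item the diagonal coefficients are positive, because $\psi_{r,t}(a)=\psi_{r,t}(e)=-bd<0$;
\item the discriminant vanishes, because $fb=gd$ and $(\rho-a)(\rho-e)=bd$.
\end{itemize}
Your bordered-matrix expansion and your ``rank-one semidefinite'' phrasing are the same argument in different words.

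\textbf{Part (ii).} The paper reduces $\psi_{r,t}(\sqrt{r(t+1)})<0$ to the exact inequality $(r-t-2)\sqrt{r(t+1)}>r(t-1)+t(t+1)$ and squares it. It then proves the resulting cubic $P(r)$ is positive by locating the vertex of $dP/dr$ and checking $P$ and $dP/dr$ at $r=t(t+1)+1$, after substituting $t=u+2$.

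You instead use two crude estimates:
\begin{itemize}
\item $r-t-2\geq r/2$, since $r\geq 2t^2+1\geq 2t+4$;
\item $\sqrt{r(t+1)}>t\sqrt{2(t+1)}\geq t\sqrt6>2t$.
\end{itemize}
These are compared against the reduced right-hand side $2(t-1)+2t(t+1)/r$, which is less than $2t$. Every step checks out, and you avoid both the squaring step and the polynomial bookkeeping.

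What each route buys: the paper's computation is an exact reduction and establishes the inequality on the slightly larger range $r\geq t(t+1)+1$. That extra range is never used, so for the lemma as stated your argument is the more economical and more transparently verifiable one.
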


\begin{proof}
(i) A straightforward expansion gives
\begin{align*}
\det\bigl(xI-M(\alpha,\beta)\bigr)
&= x^3-(a{+}e)x^2+(ae{-}bd{-}\alpha^2 f{-}\beta^2 g)x
+ a\beta^2 g + \alpha^2 ef - 2\alpha\beta dg \\
&= x\psi_{r,t}(x)-\Phi_x(\alpha,\beta). \\
\end{align*}
For $(\alpha,\beta)=(0,0)$, the characteristic polynomial is $x\psi_{r,t}(x)$. Its largest root is $\rho_0:=\rho(r,t)$. Indeed, $ae-bd=-2r+t^2+t<0$, so $\psi_{r,t}$ has one positive and one negative root. For arbitrary real $\alpha$ and $\beta$, we have
$\det\bigl(\rho_0 I-M(\alpha,\beta)\bigr)=-\Phi_{\rho_0}(\alpha,\beta)$.
Since $\psi_{r,t}(a)=\psi_{r,t}(e)=-bd<0$, the larger root $\rho_0$ of $\psi_{r,t}$ exceeds both $a$ and $e$. Moreover, $\psi_{r,t}(\rho_0)=0$ implies
$(\rho_0-a)(\rho_0-e)=bd=t(t-1)d$. Therefore,
\begin{equation}\label{eq:Phi-rho-factor}
\Phi_{\rho_0}(\alpha,\beta)
=g(\rho_0-a)
\left(\beta+\frac{d}{\rho_0-a}\alpha\right)^2.
\end{equation}
Hence $\Phi_{\rho_0}(\alpha,\beta)\geq0$, and so
$\det(\rho_0 I-M(\alpha,\beta))\leq0$. On the other hand,
$\det(xI-M(\alpha,\beta))\to+\infty$ as $x\to+\infty$.
Since all eigenvalues of $M(\alpha,\beta)$ are real, it follows that
$\lambda_{\max}(M(\alpha,\beta))\geq\rho_0$. Equality is attained at
$(\alpha,\beta)=(0,0)$.

(ii) By~\eqref{eq:rho} and the identity
$(r+2)^2-t(3t+2r)=(r-t-2)^2+4\bigl(2r-t(t+1)\bigr)$, $\rho(r,t)$ is the larger root of $\psi_{r,t}(x)$. The smaller root of $\psi_{r,t}$ is negative. Therefore,
$\rho(r,t)>\sqrt{r(t+1)}$ if and only if $\psi_{r,t}(\sqrt{r(t+1)})<0$, which is equivalent to
$(r-t-2)\sqrt{r(t+1)}>r(t-1)+t(t+1)$. Both sides are positive because $r>t(t+1)$. After squaring, it remains to prove that $P(r)>0$, where $P(r):=(r-t-2)^2r(t+1)-\bigl(r(t-1)+t(t+1)\bigr)^2$.

The polynomial $P$ has leading coefficient $t+1>0$, and $\frac{dP}{dr}$ is an upward-opening quadratic whose vertex is $\frac{3t^2+4t+5}{3(t+1)}<t(t+1)+1$. A direct calculation gives $P\bigl(t(t+1)+1\bigr)=t(t^6+t^5-2t^4-6t^3-3t^2+t+4)>0$ and $\frac{dP}{dr}\bigl(t(t+1)+1\bigr)=3t^5+3t^4-4t^2+t-3>0$ for $t\geq2$. Upon writing $t=u+2$, both expressions become polynomials in $u$ with positive coefficients. Hence $P$ is increasing and positive on $[t(t+1)+1,\infty)$. Finally, since $r$ is an integer and $r>2t^2$, we have $r\geq2t^2+1\geq t(t+1)+1$. Thus $P(r)>0$, and the result follows.
\end{proof} 

\begin{proof}[Proof of the estimates in Remark~\ref{rem:rho-gap}(a)]
Fix $t$. For an admissible pair $(\alpha,\beta)$, denote the largest eigenvalue of $M(\alpha,\beta)$ by $\lambda_{\alpha,\beta}(r)$. Since the set of admissible pairs is finite, the error terms below are uniform over this set. Applying Gershgorin's theorem to the symmetric matrix $H^{1/2}M(\alpha,\beta)H^{-1/2}$ shows that $M(\alpha,\beta)$ has a unique eigenvalue asymptotic to $r$, while its other two eigenvalues are $O(\sqrt r)$, where $H$ is the matrix in \eqref{eq:H}. Hence this unique eigenvalue of $M(\alpha,\beta)$ is $\lambda_{\alpha,\beta}(r)$ for all sufficiently large $r$. Substituting $x=\rho(r,t)+s$ into the characteristic equation in Lemma~\ref{lem:4.2}(i) and comparing powers of $r^{-1}$, we obtain
\[
\lambda_{\alpha,\beta}(r)-\rho(r,t)
=\frac{\alpha^2}{t}
+\frac{2(t-1)\alpha\beta
-\frac{\alpha^2}{t}(2t^2-2t-1)-\frac{\alpha^4}{t^2}}{r}
+O(r^{-2}).
\]
If $\alpha=0$, expanding one term further gives
\[
\lambda_{0,\beta}(r)-\rho(r,t)
=\frac{t(t-1)^2\beta^2}{r^2}+O(r^{-3}).
\]
If $t$ is even, then $\alpha$ is even and $\beta$ is odd. The minimum is therefore attained, for all sufficiently large $r$, when $\alpha=0$ and $|\beta|=1$, which gives
\[
\rho^*(r,t)-\rho(r,t)
=\frac{t(t-1)^2}{r^2}+O(r^{-3}).
\]
If $t$ is odd, then $\alpha$ is odd and $\beta$ is even. The leading term first forces $|\alpha|=1$; among these pairs, the coefficient of $r^{-1}$ is minimised when $\alpha\beta=-(t-1)$. Consequently,
\[
\rho^*(r,t)-\rho(r,t)=\frac1t+O(r^{-1}).
\]
This proves the estimates in Remark~\ref{rem:rho-gap}(a).
\end{proof}

We next determine the second largest eigenvalue of the graphs in Construction~\ref{const:4.1}.

\begin{lem}\label{lem:4.3}
Let $t$ and $r$ be integers such that $t\geq2$, $r>2t^2$ and $t\mid(r+1)$, and let $k_1$ and $k_2$ be integers such that $0\leq k_1\leq t$ and $1\leq k_2\leq t$. Then
$\lambda_2\bigl(G_{r,t}(k_1,k_2)\bigr)=\lambda_{\max}\!\bigl(M(t-2k_1,t+1-2k_2)\bigr)$, where $M(\alpha,\beta)$ is defined in~\eqref{eq:M}. In particular, $\lambda_2(G^*_{r,t})=\rho^*(r,t)$.
\end{lem}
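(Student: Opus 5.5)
Since $\{U_1,U_2,S_1,S_2,U_3,U_4\}$ is equitable, Theorem~\ref{thm:2.2} shows that every eigenvalue of $Q_{k_1,k_2}$ is an eigenvalue of $G=G_{r,t}(k_1,k_2)$. By the symmetric/antisymmetric decomposition, $\operatorname{spec}(Q_{k_1,k_2})=\{r,-1,-(t+1)\}\cup\operatorname{spec}(M(\alpha,\beta))$ with $\alpha=t-2k_1$ and $\beta=t+1-2k_2$. Hence $\lambda_{\max}(M(\alpha,\beta))$ is an eigenvalue of $G$. It is different from $r$: by connectivity (Lemma~\ref{pro:4.1}) $r$ is simple, and its eigenvector lies in the symmetric subspace. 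So $\lambda_2(G)\ge\lambda_{\max}(M(\alpha,\beta))\ge\rho(r,t)$ by Lemma~\ref{lem:4.2}(i).

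It remains to show that every eigenvalue of $G$ whose eigenvectors are orthogonal to the characteristic vectors of the partition (the ``non-quotient'' eigenvalues) is less than $\rho(r,t)$. By Lemma~\ref{lem:4.2}(ii), it suffices to bound them by $\sqrt{r(t+1)}$. Take such an eigenvector $v$ with eigenvalue $\theta$, so $v$ sums to zero on each of the six parts. Write $v=v_U+v_S$, where $v_U$ is supported on $U=U_1\cup\dots\cup U_4$ and $v_S$ on $S$.

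First consider the block on $U$. On $U_1$ the induced graph is $K_d$, which acts as $-I$ on sum-zero vectors. The join edges between $U_1$ and $U_2$ kill sum-zero vectors. On $U_2$ the graph $\overline{\frac{m}{2}K_2}$ equals $J-I-P$ with $P$ a perfect matching, so on sum-zero vectors it acts as $-I-P$, with eigenvalues in $\{0,-2\}$. Hence $A[U]$ restricted to sum-zero vectors has eigenvalues at most $0$.

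Next consider the bipartite part $B$ between $U$ and $S$. Since $S$ is independent, $\theta v_S=B^\top v_U$ and $\theta v_U=A[U]v_U+Bv_S$. Both $Bv_S$ and $B^\top v_U$ stay sum-zero on parts, because each $(U_i,S_j)$ bipartite graph is biregular. Taking inner products gives
\[
\theta\|v\|^2=\langle A[U]v_U,v_U\rangle+2\langle Bv_S,v_U\rangle\le 2\|B\|\,\|v_U\|\,\|v_S\|\le\|B\|\,\|v\|^2 .
\]
So $\theta\le\|B\|$, the spectral norm of the bipartite adjacency between $U$ and $S$. Every vertex of $S$ has $r$ neighbours in $U$, and every vertex of $U$ has at most $t+1$ neighbours in $S$. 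The standard bound $\|B\|^2\le\max_{\text{row}}\cdot\max_{\text{col}}$ then gives $\|B\|\le\sqrt{r(t+1)}$. Combined with Lemma~\ref{lem:4.2}(ii), $\theta<\rho(r,t)\le\lambda_{\max}(M(\alpha,\beta))$.

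The eigenspaces of $A(G)$ split orthogonally into the quotient-lifted space and its complement, and both are $A$-invariant for an equitable partition. The quotient space contributes exactly $\operatorname{spec}(Q_{k_1,k_2})$, whose second largest element is $\lambda_{\max}(M(\alpha,\beta))$, since $-1$ and $-(t+1)$ are negative while $\lambda_{\max}(M)\ge\rho>0$. The complement contributes only values below $\rho$. Therefore $\lambda_2(G)=\lambda_{\max}(M(t-2k_1,t+1-2k_2))$. Applying this to $(k_1^*,k_2^*)$ gives $\lambda_2(G^*_{r,t})=\rho^*(r,t)$.

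The main obstacle is the non-quotient bound. I expect the crude Schur-type estimate $\|B\|\le\sqrt{r(t+1)}$ to suffice, precisely because Lemma~\ref{lem:4.2}(ii) was set up for this purpose. The one point to check carefully is that $A[U]\preceq 0$ on sum-zero vectors, including the cross terms between $U_1$ and $U_2$ (these vanish since the join is complete) and the case $m=2$.
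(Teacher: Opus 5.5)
Your proposal is correct and follows essentially the same route as the paper. You split the equitable six-part quotient into the symmetric block (eigenvalues $r,-1,-(t+1)$) and the antisymmetric block $M(\alpha,\beta)$. You then bound the eigenvalues on the complement of the quotient space by $\sqrt{r(t+1)}$, using the nonpositive within-part contributions and the Schur-type bound $\|A_{US}\|\le\sqrt{r(t+1)}$. Finally, Lemma~\ref{lem:4.2}(ii) separates this from $\lambda_{\max}(M(\alpha,\beta))$; your extra remark that $Bv_S$ and $B^{\top}v_U$ stay sum-zero on parts is true but not needed.
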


\begin{proof}
Write $G=G_{r,t}(k_1,k_2)$, and set $\alpha=t-2k_1$ and $\beta=t+1-2k_2$. By Construction~\ref{const:4.1}, $\mathcal P=\{U_1,U_2,S_1,S_2,U_3,U_4\}$ is an equitable partition of $G$ with quotient matrix $Q_{k_1,k_2}$ given by~\eqref{eq:Q6}. Thus every eigenvalue of $Q_{k_1,k_2}$ is an eigenvalue of $G$ by Theorem~\ref{thm:2.2}.

As noted above, the reversal symmetry of $Q_{k_1,k_2}$ decomposes its spectrum into the spectra of a symmetric block and an antisymmetric block. The symmetric block is the matrix $M_{\mathrm{sym}}$ defined in \eqref{eq:Msym} and $\det(xI-M_{\mathrm{sym}})=(x-r)(x+1)(x+t+1)$. Hence $\operatorname{spec}(M_{\mathrm{sym}})=\{r,-1,-(t+1)\}$. The corresponding eigenvectors lift to vectors of the form $(x,y,z,z,y,x)$; in particular, the eigenvector for $r$ is $\mathbf{1}$. The antisymmetric block is $M(\alpha,\beta)$, and its eigenvectors lift to vectors of the form $(x,y,z,-z,-y,-x)$. Since the paired parts have equal orders, every such lifted vector has coordinate sum zero and is therefore orthogonal to $\mathbf{1}$.

It remains to bound the eigenvalues of $G$ that do not arise from the quotient. Let $\mathcal S\subseteq\mathbb R^{V(G)}$ be the six-dimensional space of vectors that are constant on each part of $\mathcal P$. Since $\mathcal P$ is equitable, $\mathcal S$ is invariant under $A=A(G)$. The matrix $A$ is symmetric, so $\mathcal S^{\perp}$ is also invariant. Therefore, the remaining eigenvalues of $G$ are precisely the eigenvalues of $A|_{\mathcal S^{\perp}}$.

Let $w\in\mathcal S^{\perp}$ be a unit vector. The sum of the coordinates of $w$ on each part of $\mathcal P$ is zero. We estimate $w^{\top}Aw$ by considering the different types of edges. The contributions from edges within the parts are non-positive. Indeed, since $G[U_1]\cong K_d$ (and similarly $G[U_4]\cong K_d$), we have $w|_{U_1}^{\top}(J-I)w|_{U_1}=-\|w|_{U_1}\|^{2}$. The adjacency matrix of $G[U_2]\cong\overline{\frac{t^2-t}{2}K_2}$ is $J-I-P$, where $P$ is the adjacency matrix of the deleted perfect matching. The same holds for $U_3$. Since $w|_{U_2}$ has coordinate sum zero and $\|P\|_{\mathrm{op}}=1$,
\[
w|_{U_2}^{\top}(J-I-P)\,w|_{U_2}
=-\|w|_{U_2}\|^{2}-w|_{U_2}^{\top}P\,w|_{U_2}\leq 0.
\]
The independent sets $S_1$ and $S_2$ contribute zero. The complete bipartite edge sets between $U_1$ and $U_2$, and between $U_3$ and $U_4$, also contribute zero, because $w|_{U_i}^{\top}Jw|_{U_j}=(\mathbf{1}^{\top}w|_{U_i})(\mathbf{1}^{\top}w|_{U_j})=0$ for $(i,j)=(1,2)$ and $(3,4)$.

Let $A_{US}$ be the biadjacency matrix between $U=U_1\cup\cdots\cup U_4$ and $S=S_1\cup S_2$. Every column of $A_{US}$ sums to $r$, because $S$ is independent and $G$ is $r$-regular, while every row sums to at most $t+1$. The standard inequality $\|A_{US}\|_{\mathrm{op}}\leq\sqrt{\|A_{US}\|_1\|A_{US}\|_\infty}$ gives $\|A_{US}\|_{\mathrm{op}}\leq\sqrt{r(t+1)}$. Using $2\|w_U\|\|w_S\|\leq\|w_U\|^{2}+\|w_S\|^{2}=1$, we obtain
\[
w^{\top}Aw\leq2w_U^{\top}A_{US}w_S
\leq2\|A_{US}\|_{\mathrm{op}}\|w_U\|\|w_S\|
\leq\sqrt{r(t+1)}.
\]
It follows from the Rayleigh quotient that every eigenvalue of $A|_{\mathcal S^{\perp}}$ is at most $\sqrt{r(t+1)}$. On the other hand, Lemma~\ref{lem:4.2} gives $\lambda_{\max}(M(\alpha,\beta))\geq\rho(r,t)>\sqrt{r(t+1)}$. Hence every eigenvalue outside the quotient subspace is strictly smaller than $\lambda_{\max}(M(\alpha,\beta))$.

The other quotient eigenvalues are $-1$, $-(t+1)$ and the two smaller eigenvalues of $M(\alpha,\beta)$, each of which is at most $\lambda_{\max}(M(\alpha,\beta))$. Moreover, an eigenvector of $M(\alpha,\beta)$ corresponding to $\lambda_{\max}(M(\alpha,\beta))$ lifts to an eigenvector of $G$ orthogonal to $\mathbf{1}$. Since $G$ is connected and $r$-regular, the eigenvalue $r$ is simple and has eigenvector $\mathbf{1}$. Thus $\lambda_{\max}(M(\alpha,\beta))<r$, and we conclude that $\lambda_2(G_{r,t}(k_1,k_2))=\lambda_{\max}(M(t-2k_1,t+1-2k_2))$. Finally, taking $(k_1,k_2)=(k_1^*,k_2^*)$ and applying~\eqref{eq:rhostar} gives $\lambda_2(G^*_{r,t})=\rho^*(r,t)$.
\end{proof}

\begin{proof}[Proof of Theorem~\ref{thm:1.2}]
Let $t \ge 2$ and $r > 2t^2$ be integers such that $t\mid(r+1)$. Then Construction~\ref{const:4.1} applies. By Lemmas \ref{pro:4.1} and \ref{lem:4.3}, the graph $G^*_{r,t}$ is connected and $r$-regular, and it satisfies $\kappa(G^*_{r,t})=2t$ and $\lambda_2(G^*_{r,t})=\rho^*(r,t)$. Thus $G^*_{r,t}$ is admissible in the infimum defining $\lambda_2^{\min}(r,t)$ in~\eqref{eq:lmin}, and hence $\lambda_2^{\min}(r,t)\leq\rho^*(r,t)$. 
\end{proof}

We conclude this section with two observations. First, the divisibility condition $t\mid(r+1)$ guarantees that all prescribed degrees are integral. Second, although $(\alpha,\beta)=(0,0)$ minimises the real-parameter problem in Lemma~\ref{lem:4.2}, it is not an admissible integer pair: $\alpha=t-2k_1$ has the same parity as $t$, whereas $\beta=t+1-2k_2$ has the opposite parity. Thus the difference $\rho^*(r,t)-\rho(r,t)$ arises from restricting the real parameter space to the integer pairs allowed by the construction. Theorem~\ref{thm:1.2} provides an upper bound obtained from the constructed family. It does not claim that $\lambda_2^{\min}(r,t)=\rho^*(r,t)$, nor does it imply uniqueness of an extremal graph if equality holds.

\section{Proof of Theorem~\ref{thm:1.3}}
\label{sec:pf2}

Let $t\geq 2$ and $r>(2t-1)(2t-2)$ be integers, and let $G$ be a connected $r$-regular graph with $\kappa(G)=2l-1\leq2t-1$. We aim to prove the lower bound in~\eqref{eq:odd-bound}.

In the case $l=1$, by Fiedler's inequality, $\lambda_2(G)\geq r-1$. On the other hand, $r-1-\xi(r,t)=\frac{2rt-3r+2t-4}{2(r-t+2)}>0$ as $t\geq2$. Since $\min\bigl\{\xi(r,t),\nu(r,t)\bigr\}\leq \xi(r,t)<r-1$, the result follows when $l = 1$.

For the remainder of this section, we assume $2\leq l\leq t$ and put $s=2l-1$. To prove Theorem \ref{thm:1.3}, we first minimise the largest nontrivial eigenvalue of a three-part quotient, then show that the resulting bound decreases with the cut parameter $l$, and finally compare the two candidate minima.

\subsection{Minimising the three-part quotient}
\label{subsec:odd-lower}

Let $S$ be a minimum vertex cut of $G$, so $|S|=s=2l-1$. Let $C_1,C_2,\ldots,C_k$ be the components of $G-S$, where $k\geq2$. Set $V_1=C_1$ and $V_2=\bigcup_{i=2}^{k}C_i$. Denote $m_i=|V_i|$ for $i=1,2$, and set $p=|[S,V_1]|$ and $q=|[S,V_2]|$. Then the quotient matrix of the partition $\{V_1,S,V_2\}$ of $V(G)$ is
\[
Q_0=
\begin{pmatrix}
r-\frac{p}{m_1} & \frac{p}{m_1} & 0 \\[6pt]
\frac{p}{s} & r-\frac{p+q}{s} & \frac{q}{s} \\[6pt]
0 & \frac{q}{m_2} & r-\frac{q}{m_2}
\end{pmatrix}.
\]
Applying Theorem~\ref{thm:2.1} to remove the trivial eigenvalue $r$ yields the matrix
\begin{equation}\label{eq:odd-Q1}
Q_1=
\begin{pmatrix}
r-\frac{p}{m_1}-\frac{p}{s} & \frac{q}{s} \\[6pt]
\frac{p}{s} & r-\frac{q}{m_2}-\frac{q}{s}
\end{pmatrix}
\end{equation}
and $\lambda_1(Q_1)=\lambda_2(Q_0)$. Theorem~\ref{thm:2.2} now gives
\begin{equation}\label{eq:odd-interlace}
\lambda_2(G)\geq\lambda_2(Q_0)=\lambda_1(Q_1).
\end{equation}
By Lemma~\ref{lem:2.3}(ii), the sizes of $V_1$ and $V_2$ satisfy
\begin{equation}\label{eq:odd-msizes}
m_1,\ m_2\geq r-2l+2=r-s+1.
\end{equation}
Counting the edges from $S$ to $V_1$ and $V_2$ gives
\begin{equation}\label{eq:odd-admissible}
p\geq g(m_1),\quad q\geq g(m_2),\quad p+q\leq sr,
\quad\text{where } g(x):=x(r-x+1).
\end{equation}

For convenience, write~\eqref{eq:odd-Q1} as
\[
Q_1=\begin{pmatrix} a_1 & q/s\\ p/s & a_2\end{pmatrix},\quad
a_1=r-\frac{p}{m_1}-\frac{p}{s},\quad a_2=r-\frac{q}{m_2}-\frac{q}{s}.
\]
Since the off-diagonal entries are nonnegative, the larger eigenvalue of $Q_1$ is
\begin{equation}\label{eq:odd-eig}
\lambda_1(Q_1)=\frac{a_1+a_2}{2}
+\sqrt{\Bigl(\frac{a_1-a_2}{2}\Bigr)^2+\frac{pq}{s^2}}\,.
\end{equation}

\begin{lem}\label{lem:5.1}
For fixed nonnegative real numbers $p$ and $q$, the value $\lambda_1(Q_1)$ is non-decreasing in each of the positive variables $m_1$ and $m_2$.
\end{lem}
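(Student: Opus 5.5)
The plan is to differentiate the explicit formula \eqref{eq:odd-eig} with respect to $m_1$; the case of $m_2$ then follows by the symmetry $(p,m_1)\leftrightarrow(q,m_2)$, which swaps $a_1$ and $a_2$ and leaves $pq$ unchanged. Since $p$, $q$ and $s$ are fixed, $m_1$ appears only in $a_1=r-p/m_1-p/s$. Moreover,
\[
\frac{\partial a_1}{\partial m_1}=\frac{p}{m_1^2}\geq 0 .
\]
So it suffices to show that $\lambda_1(Q_1)$ is non-decreasing as a function of $a_1$ when $a_2$ and the product $pq/s^2$ are held fixed.

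Put $\delta=(a_1-a_2)/2$ and $c=pq/s^2\geq0$. Then $\lambda_1=\frac{a_1+a_2}{2}+\sqrt{\delta^2+c}$. If $c>0$, the derivative with respect to $a_1$ is
\[
\frac{1}{2}+\frac{\delta}{2\sqrt{\delta^2+c}},
\]
which is strictly positive because $|\delta|<\sqrt{\delta^2+c}$. Combined via the chain rule with $\partial a_1/\partial m_1\geq 0$, this gives $\partial\lambda_1/\partial m_1\geq0$.

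If $c=0$, that is, $p=0$ or $q=0$, then $\lambda_1=\max\{a_1,a_2\}$. This is a non-decreasing function of $a_1$ without any differentiability argument. In the subcase $p=0$, $a_1$ does not depend on $m_1$ at all, so there is nothing to prove. The case of $m_2$ is identical with $a_2=r-q/m_2-q/s$.

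There is no real obstacle here. The only care needed is the degenerate case $pq=0$, where the square root is not differentiable at $\delta=0$; treating it directly through the max formula, as above, avoids that issue. Alternatively, one can note that $\lambda_1$ is the maximum eigenvalue of a matrix that is diagonally similar to a symmetric matrix, and that increasing a diagonal entry cannot decrease it. I would mention this as a cleaner one-line justification: $\lambda_1=\max_{\|x\|=1}x^\top\tilde Q_1x$ for the symmetrization $\tilde Q_1$, whose off-diagonal entry $\sqrt{pq}/s$ is fixed.
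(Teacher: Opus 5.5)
Your proof is correct and is essentially the paper's: differentiate \eqref{eq:odd-eig} in $a_1$ and combine with $\partial a_1/\partial m_1=p/m_1^2\ge 0$ via the chain rule. The only difference is the degenerate case: you handle $pq=0$ through $\lambda_1=\max\{a_1,a_2\}$, whereas the paper treats $p=q=0$ separately ($Q_1=rI_2$) and otherwise notes that the square-root term is positive whenever $p+q>0$. Your aside about diagonal similarity is not literally true when exactly one of $p,q$ is zero. The Rayleigh-quotient argument still works, because $Q_1$ and its symmetrization share the characteristic polynomial $(x-a_1)(x-a_2)-pq/s^2$.
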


\begin{proof}
If $p=q=0$, then $Q_1=rI_2$, and the assertion is immediate. Suppose that $p+q>0$, and set $R=\sqrt{((a_1-a_2)/2)^2+pq/s^2}$. Then $R>0$: indeed, if $p=0<q$, then $a_1=r>a_2$, and the case $q=0<p$ is symmetric. By~\eqref{eq:odd-eig},
$\frac{\partial\lambda_1(Q_1)}{\partial a_1}=\frac12+\frac{a_1-a_2}{4R}\geq0$, because $R\geq\tfrac12|a_1-a_2|$. Moreover, $\partial a_1/\partial m_1=p/m_1^2\geq0$, while $a_2$ is independent of $m_1$. Hence the chain rule gives $\partial\lambda_1(Q_1)/\partial m_1\geq0$. By symmetry, $\partial\lambda_1(Q_1)/\partial m_2\geq0$ as well.
\end{proof}

To obtain a lower bound on $\lambda_1(Q_1)$, we now enlarge the feasible set by keeping $m_1$ and $m_2$ integral but allowing $p$ and $q$ to be nonnegative real numbers, subject to the constraints in~\eqref{eq:odd-admissible}.
For the parameters arising from the connected graph $G$, both $p$ and $q$ are positive.
In this relaxation, the derivative calculation in Claim~1 of the proof of Theorem~\ref{thm:1.1}, with $2l$ replaced by $s=2l-1$, shows that $\lambda_1(Q_1)$ is strictly decreasing in both $p$ and $q$ whenever $p,q>0$.
Thus, if $p+q<sr$, we may increase one or both parameters until $p+q=sr$ without increasing $\lambda_1(Q_1)$.

We may further assume that $m_1,m_2\leq r+1$. Indeed, $g(r+1)=0$, so replacing any $m_i>r+1$ by $r+1$ preserves the relaxed feasibility constraints. By Lemma~\ref{lem:5.1}, this replacement does not increase $\lambda_1(Q_1)$. Therefore, it suffices to minimise $\lambda_1(Q_1)$ over the region
\begin{equation}\label{eq:odd-compact}
m_1, m_2\in\mathbb{Z},\;\, r-2l+2 \leq m_1, m_2 \leq r+1,\;\,
g(m_1)\leq p,\;\, g(m_2)\leq q,\;\, p+q=sr.
\end{equation}
For each fixed integer pair $(m_1,m_2)$, the constraints in~\eqref{eq:odd-compact} define a possibly empty compact interval for $p$, with $q=sr-p$. The entire region is nonempty and contains only finitely many choices of $(m_1,m_2)$. Hence the minimum of $\lambda_1(Q_1)$ over the region~\eqref{eq:odd-compact} exists.

\begin{lem}\label{lem:5.2}
Let $l\geq2$ and $r>(2l-1)(2l-2)$ be integers. For real $a\in[r-2l+2,r-l+1]$, set $m_1=a$, $m_2=2r-2l+3-a$, $p=g(a)$ and $q=sr-g(a)$ in $Q_1$, and denote the resulting value of $\lambda_1(Q_1)$ by $\phi(a)$. Then for integral $a$, $m_2$ is the smallest feasible integer size of $V_2$. Moreover, $\phi$ is strictly decreasing on $[r-2l+2,r-l+1]$, and hence its minimum is attained at $a=r-l+1$.
\end{lem}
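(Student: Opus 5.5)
Both assertions will be handled through the shift $a=r-2l+2+j$ with $j\in[0,l-1]$. Under this shift $m_1=a$, $m_2=2r-2l+3-a=r+1-j$, and both sizes lie in $[r-2l+2,r+1]$.

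For the first assertion, take $j$ to be an integer. Expanding gives
\[
g(a)+g(r+1-j)=sr-(2l-1)(2l-2)+2j(2l-1-j).
\]
Since $2j(2l-1-j)\le 2l(l-1)\le(2l-1)(2l-2)$ for $0\le j\le l-1$, the value $q=sr-g(a)$ satisfies $q\ge g(m_2)$. So $m_2=r+1-j$ is feasible in~\eqref{eq:odd-compact}. It also satisfies the lower bound~\eqref{eq:odd-msizes}, because $r+1-j\ge r-l+2$.

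For minimality, note that $g$ is decreasing on $[(r+1)/2,r+1]$, which contains every admissible size. Hence it suffices to rule out $m_2=r-j$. The increment is $g(r-j)-g(r+1-j)=r-2j$. Moving from $m_2=r+1-j$ to $m_2=r-j$ therefore overshoots the budget $sr$ unless
\[
r-2j\le (2l-1)(2l-2)-2j(2l-1-j).
\]
For $j=0$ this contradicts $r>(2l-1)(2l-2)$. For $j\ge1$ the right-hand side is smaller still while the left loses only $2j$; comparing, $2j(2l-1-j)\ge 2j$, so the inequality fails again. Thus $m_2$ is the smallest feasible integer size.

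For the monotonicity, let $F(x,a)=\det(xI-Q_1)$, with $m_1,m_2,p,q$ the functions of $a$ given in the statement. Since $\phi(a)$ is the larger root of the monic quadratic $F(\cdot,a)$, we have $\partial F/\partial x>0$ at $x=\phi(a)$. Implicit differentiation gives
\[
\phi'(a)=-\frac{F_a(\phi(a),a)}{F_x(\phi(a),a)},
\]
so it suffices to prove $F_a(\phi(a),a)>0$. Here $F_a$ is linear in $x$, since $x$ enters $F$ only through $x^2-(a_1+a_2)x+\det Q_1$. Alternatively, one can differentiate formula~\eqref{eq:odd-eig} directly.

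The plan is to sandwich $\phi(a)$ and check the sign of $F_a$ at the endpoints. A lower estimate comes from $\lambda_1(Q_1)\ge$ its diagonal entries, in particular $\phi(a)\ge a_1$ and $\phi(a)\ge a_2$. An upper estimate $\phi(a)<r$ follows from Theorem~\ref{thm:2.1} together with connectedness. Since $F_a$ is affine in $x$, its positivity at both endpoints gives $F_a(\phi(a),a)>0$.

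Checking these endpoint signs is the expected main obstacle. After clearing the positive denominators $m_1^2m_2^2s$, it reduces to positivity of explicit polynomials in $r,l,j$. I would attack these as in Lemma~\ref{lem:3.1}:
\begin{itemize}
\item substitute $r=(2l-1)(2l-2)+1+x$ with $x\ge0$, and write $l=u+2$, $j=\gamma$, $l-1-j=\sigma$;
\item show that the coefficients in $x$ are polynomials in $u,\gamma,\sigma$ with positive coefficients, or use concavity in $r$ plus the value and derivative at $x=0$.
\end{itemize}
If the crude sandwich is too weak, I will instead substitute a sharper bound: eliminate $x^2$ via $F(\phi,a)=0$, so that $F_a(\phi,a)$ becomes a single radical expression. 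Its sign is then decided by squaring, as in Claim~1. Strict decrease on the closed interval then yields the minimum at $a=r-l+1$.
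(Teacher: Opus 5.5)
Your first assertion is handled correctly, and it is the paper's argument in cleaner coordinates. With $j=a-(r-2l+2)$ you get $q-g(m_2)=(2l-1)(2l-2)-2j(2l-1-j)>0$ and $q-g(m_2-1)=(2l-1)(2l-2)-2j(2l-2-j)-r<0$. Their values at $j=0$ and $j=l-1$ are exactly the paper's endpoint values.

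Your reduction for monotonicity is also the paper's. Since $F_a(x,a)=D'(a)-\tau'(a)x$ and $F_x(\phi,a)=\sqrt{\tau^2-4D}$, you need $\tau'\phi-D'<0$. For real $a$ there is no graph, so justify $\phi<r$ by irreducibility of $Q_0$ (i.e.\ $p=g(a)>0$ and $0<q<rm_2$) rather than by connectedness.

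The gap is that the inequality carrying the lemma's content is never proved; you defer it as ``the expected main obstacle''. You also miss the step that makes one endpoint enough. The paper shows $\tau'(a)>0$: in its variable $j'=r-l+1-a=l-1-j$, the numerator $2j'r+r+j'^2-2j'l+4j'-l^2+l+1$ is increasing in $j'$ and positive at $j'=0$. Hence $F_a(\cdot,a)$ is decreasing, and $\phi<r$ gives $F_a(\phi,a)>F_a(r,a)$. Your lower-endpoint checks at $a_1,a_2$ are therefore superfluous. What must be shown is $D'-r\tau'=2(r+1)N(j',l,r)/\bigl((2l-1)m_2^2\bigr)>0$, for an explicit $N$ that is quadratic in $r$ and cubic in $j'$.

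Your suggested way of proving this cannot work as stated. Take $l=\gamma+\sigma+1$ (so $\sigma=j'$) and $r=(2l-1)(2l-2)+1+x$. The relevant polynomial is not positive on the orthant $\gamma,\sigma,x\ge0$. On $\gamma=x=0$ one has $N=2(l-1)(2l-1)(8l^3-28l^2+34l-17)=32\sigma^5-8\sigma^2-6\sigma$, which is negative for small $\sigma>0$, i.e.\ for $l\in(1,2)$. Multiplying by the positive cleared denominators does not change this sign. So no expansion with nonnegative coefficients exists, and the hypothesis $l\ge2$ must be used essentially.

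The paper does this in three steps:
\begin{enumerate}[(1)]
\item It shows the vertex of $N$ in $r$ is at most $R_0=(2l-1)(2l-2)$, so it suffices to take $r=R_0+1$.
\item It shows $\partial N(j',l,R_0+1)/\partial j'$ is positive at $j'=0$ and decreasing in $j'$, so the minimum over $[0,l-1]$ is at an endpoint.
\item It checks that the endpoint values $(l-1)(8l^3-24l^2+29l-15)$ and $2(l-1)(2l-1)(8l^3-28l^2+34l-17)$ are positive for $l\ge2$.
\end{enumerate}
Your ``value and derivative at $x=0$'' fallback leaves exactly this two-variable positivity at $x=0$ open. The radical-squaring fallback is only named, not carried out. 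As written, the strict decrease of $\phi$ is therefore not established.
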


\begin{proof}
Put $b=2r-2l+3-a$. Then $a+b=2r-2l+3$ and $r-l+2\leq b\leq r+1$. We first show that, when $a$ is integral, $b$ is the smallest feasible size of $V_2$.

On the interval $[r-2l+2,r+1]$, we have $\frac{dg}{dx}=r+1-2x\leq-r+4l-3<0$, where the last inequality follows from $r>(2l-1)(2l-2)$. Hence $g$ is strictly decreasing on this interval. The expression $sr-g(a)-g(b)$ is a quadratic polynomial in $a$ with positive leading coefficient and discriminant $-4(2l-1)(2l-3)<0$. Therefore, $sr-g(a)-g(b)>0$, and so $g(b)<q$. Thus $b$ is feasible.

Similarly, $sr-g(a)-g(b-1)$ is a quadratic polynomial in $a$ with positive leading coefficient. At the endpoints $a=r-2l+2$ and $a=r-l+1$ of the interval $[r-2l+2, r-l+1]$, its values are $(2l-1)(2l-2)-r$ and $2l(l-1)-r$, respectively. Both are negative. By convexity, this quadratic is negative throughout the interval, and hence $g(b-1)>q$. Since $g$ is strictly decreasing, $b$ is the smallest integer satisfying $g(b)\leq q$. Lemma~\ref{lem:5.1} then shows that this choice minimises $\lambda_1(Q_1)$ over all feasible sizes of $V_2$.

It remains to show that $\phi$ is strictly decreasing. Regard $\tau=\operatorname{tr}Q_1$ and $D=\det Q_1$ as functions of the real variable $a$. Since $\phi=\tfrac12(\tau+\sqrt{\tau^2-4D})$ and $\sqrt{\tau^2-4D}=2\phi-\tau>0$, differentiation gives
\begin{equation}\label{eq:phiprime}
\frac{d\phi}{da}=\frac{\frac{d\tau}{da}\,\phi-\frac{dD}{da}}{\sqrt{\tau^2-4D}},
\qquad\text{so}\qquad
\operatorname{sign}\frac{d\phi}{da}=\operatorname{sign}\left(\frac{d\tau}{da}\,\phi-\frac{dD}{da}\right).
\end{equation}
The inequalities proved above, together with $g(b-1)<rb$, give $0<q<rb$. Also,  $p=g(a)>0$. Hence every entry of $Q_0$ is nonnegative, and its positive off-diagonal entries make it irreducible. Since $Q_0$ has constant row sum $r$, the Perron--Frobenius theorem gives $\phi=\lambda_1(Q_1)=\lambda_2(Q_0)<r$. Write $a=r-l+1-j$, where $0\leq j\leq l-1$. Differentiating the trace and determinant of~\eqref{eq:odd-Q1} and collecting over common positive denominators gives
\begin{align*}
\frac{d\tau}{da}
&=\frac{2\,\bigl(2jr+r+j^2-2jl+4j-l^2+l+1\bigr)}{(r-l+j+2)^2},\\
\frac{dD}{da}&-r\,\frac{d\tau}{da}
=\frac{2(r+1)\,N(j,l,r)}{(2l-1)\,(r-l+j+2)^2},
\end{align*}
where
\[
N(j,l,r)=(2j+1)r^2-\bigl(2j^2+8jl-6j+2l^2-2\bigr)r
+\bigl(4l^3-9l^2+4l+4j+6jl^2-14jl-5j^2-2j^3\bigr).
\]
The numerator of $\frac{d\tau}{da}$ is increasing in $j$, because its derivative with respect to $j$ is $2(r-l+j+2)>0$. At $j=0$, this numerator is
$r-l^2+l+1\geq(2l-1)(2l-2)-l^2+l+1=3l^2-5l+3>0$. Thus $\frac{d\tau}{da}>0$. Since $\phi<r$, it follows that
\[
\frac{d\tau}{da}\,\phi-\frac{dD}{da}
\;<\;\frac{d\tau}{da}\,r-\frac{dD}{da}
\;=\;-\,\frac{2(r+1)\,N(j,l,r)}{(2l-1)\,(r-l+j+2)^2}.
\]
It remains to show that $N(j,l,r)>0$. Put $R_0=(2l-1)(2l-2)$. As a quadratic polynomial in $r$, $N(j,l,r)$ has positive leading coefficient $2j+1$ and vertex
\[
r_{\mathrm v}=\frac{2j^2+8jl-6j+2l^2-2}{2(2j+1)}.
\]
Putting $R_0-r_{\mathrm v}$ over a common denominator and simplifying gives
\[
R_0-r_{\mathrm v}=\frac{3(l-1)^2+j(8l^2-16l+7)-j^2}{2j+1}.
\]

The numerator in the last expression is a concave quadratic polynomial in $j$. Its values at $j=0$ and $j=l-1$ are $3(l-1)^2$ and $(l-1)(8l^2-14l+5)$, respectively, and both are nonnegative. Hence it is nonnegative throughout $[0,l-1]$, so $r_{\mathrm v}\leq R_0$. Since $r$ is an integer and $r>R_0$, we have $r\geq R_0+1$. Therefore, $N(j,l,r)$ is increasing as a function of $r$ on the relevant interval, and $N(j,l,r)\geq N(j,l,R_0+1)$.
It remains to show that $N(j,l,R_0+1)>0$ for $0\leq j\leq l-1$. Differentiating with respect to $j$ gives
\[
\frac{d}{dj}N(j,l,R_0+1)
=-6j^2-(16l^2-24l+22)\,j
+32l^4-128l^3+198l^2-146l+40.
\]
This derivative is strictly decreasing for $j\geq0$. It is positive at $j=0$, because the constant $32l^4-128l^3+198l^2-146l+40$ is increasing for $l\geq2$ and equals $28$ at $l=2$. Consequently, $N(j,l,R_0+1)$ first increases and then, if its derivative changes sign, decreases on $[0,l-1]$. Its minimum is therefore attained at an endpoint. The two endpoint values are $(l-1)(8l^3-24l^2+29l-15)$ and $2(l-1)(2l-1)(8l^3-28l^2+34l-17)$. Both are positive for $l\geq2$, since the two cubic factors are increasing in $l$ and take the values $11$ and $3$, respectively, at $l=2$. Thus $N(j,l,r)>0$. By~\eqref{eq:phiprime}, we have $\frac{d\phi}{da}<0$ on $[r-2l+2, r-l+1]$, and hence $\phi$ is strictly decreasing on this interval. This completes the proof.
\end{proof}

For integral $a$, we refer to the parameters in Lemma \ref{lem:5.2} as a one-sided clique configuration, since in any graph realising them, $V_1$ induces a clique. 

\begin{lem}
\label{lem:cl1}
Let $l \ge 2$ and $r>(2l-1)(2l-2)$ be integers. Then on the region~\eqref{eq:odd-compact},
\[
\lambda_1(Q_1)\geq\min\bigl\{\xi(r,l),\nu(r,l)\bigr\},
\]
where $\xi(r,l)$ and $\nu(r,l)$ are defined in~\eqref{eq:xi-nu-def}. Moreover, the minimum over this relaxed region is attained at one of the following two configurations:
\begin{enumerate}[\rm(i)]
\item \emph{(balanced partition)} $m_1=m_2=r-l+2$ and $p=q=\tfrac{sr}{2}$, where $\lambda_1(Q_1)=\xi(r,l)$; and
\item \emph{(one-sided clique configuration)} up to interchanging the indices $1$ and $2$, $m_1=r-l+1$, $m_2=r-l+2$, $p=l(r-l+1)$ and $q=(l-1)(r+l)$, where $\lambda_1(Q_1)=\nu(r,l)$.
\end{enumerate}
\end{lem}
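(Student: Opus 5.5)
We minimise $\lambda_1(Q_1)$ over the finite union of intervals in~\eqref{eq:odd-compact} by splitting according to whether $m_1,m_2$ lie above or below the threshold $k'=r-l+2$. Note that $g$ is strictly decreasing on $[r-2l+2,r+1]$ (shown in the proof of Lemma~\ref{lem:5.2}), and that $g(k')=(r-l+2)(l-1)$ and $g(k'-1)=(r-l+1)l$. Hence $g(k')+g(k'-1)=(2l-1)r-(l-1)\cdot 2\ldots$ up to an explicit small correction, and $2g(k'-1)=2lr-2l(l-1)>sr$ because $r>(2l-1)(2l-2)$. So at most one of $m_1,m_2$ can be $\le k'-1$. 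We therefore treat two cases, each paralleling the structure of Lemma~\ref{lem:3.2}.

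\emph{Case A: $m_1,m_2\ge k'$.} By Lemma~\ref{lem:5.1} we may lower both sizes to $m_1=m_2=k'$. The constraints $p\ge g(k')$ and $q\ge g(k')$ with $p+q=sr$ are then slack enough that $p$ ranges over an interval containing $sr/2$. With equal $m$'s, the explicit formula~\eqref{eq:odd-eig} becomes, exactly as in Claim~2 of Section~\ref{sec:pf1}, a function of $p-q$ alone, namely
\[
\lambda_1=r-\tfrac{sr}{2}\Bigl(\tfrac1{k'}+\tfrac1s\Bigr)+\tfrac12\sqrt{\Bigl(\tfrac1{k'}+\tfrac1s\Bigr)^2(p-q)^2+\tfrac{4pq}{s^2}}.
\]
Writing $pq=((sr)^2-(p-q)^2)/4$, the radicand is increasing in $(p-q)^2$ because $(1/k'+1/s)^2>1/s^2$. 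It is therefore minimised at $p=q=sr/2$, which yields $\lambda_1=r-sr/(2k')=\xi(r,l)$. This is a direct check: $r-\frac{(2l-1)r}{2(r-l+2)}=\frac{r(2r-4l+5)}{2(r-l+2)}$.

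\emph{Case B: $m_1\le k'-1<k'\le m_2$.} This is the case up to symmetry. By Lemma~\ref{lem:5.1} and the monotonicity of $\lambda_1$ in $p$ (Claim~1 with $2l\to s$), we would like to push $p$ to its lower bound $g(m_1)$ and $m_2$ to its smallest feasible value. However, decreasing $p$ increases $q$, so monotonicity in $p$ alone does not settle the matter. As in Lemma~\ref{lem:3.2}, I would show that the determinant $D=\det(xI-Q_1)$ at $x=\min\{\xi,\nu\}$ is concave in $p$ with negative derivative at $p=g(m_1)$, or, more directly, that $\partial\lambda_1/\partial p>0$ along $p+q=sr$ in this regime because the $V_2$ side is larger. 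This reduces the problem to $p=g(m_1)$, where Lemma~\ref{lem:5.2} applies verbatim: the optimal $m_2=2r-2l+3-m_1$, and $\phi$ decreases in $m_1=a$, giving $a=r-l+1$. This is configuration (ii). Its value $\nu(r,l)$ follows by substituting $m_1=r-l+1$, $m_2=r-l+2$, $p=l(r-l+1)$ and $q=(l-1)(r+l)$ into~\eqref{eq:odd-eig}. Here $\tau$ matches~\eqref{eq:tau}, and the discriminant simplifies to $\frac{(r+l)(r+1)W(r,l)}{(2l-1)(r-l+2)^2}$.

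The main obstacle is the reduction to $p=g(m_1)$ in Case B, i.e.\ showing that moving edges from $V_1$ to $V_2$ (subject to $q\ge g(m_2)$) does not lower $\lambda_1$ below the boundary value. I would attempt it by computing $\partial\lambda_1/\partial p$ along $p+q=sr$ via~\eqref{eq:phiprime}, with $\tau$ and $D$ viewed as functions of $p$. The sign would be reduced to the positivity of an explicit polynomial, bounded using $r\ge(2l-1)(2l-2)+1$ and a vertex-location argument as in Lemma~\ref{lem:5.2}. If that derivative is not single-signed, the fallback is the concavity-in-$p$ argument of Lemma~\ref{lem:3.2}: since the sublevel set $\{\lambda_1< x\}$ in $p$ is where the concave $D$ is positive, it is an interval. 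One then checks $D\le 0$ at both endpoints of the feasible $p$-interval at $x=\min\{\xi,\nu\}$, which forces the minimum of $\lambda_1$ to an endpoint; the endpoints correspond to configurations (i) and (ii) after the reductions above.
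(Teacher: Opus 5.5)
Your Case A is sound and matches the paper's argument: lower both sizes to $r-l+2$ by Lemma~\ref{lem:5.1}, then use symmetry in $p-q$ to force $p=q=sr/2$. Your exclusion of $m_1,m_2\le r-l+1$ holding simultaneously, via $2g(r-l+1)>sr$, is also correct. The gap is in Case B, at the step you yourself flag: you never show that, for fixed $m_1<m_2$ with $m_1\le r-l+1$, $\lambda_1(Q_1)$ along $p+q=sr$ is minimised at $p=g(m_1)$. The phrase ``because the $V_2$ side is larger'' is only a heuristic, and the proposed reduction of $\partial\lambda_1/\partial p$ to an explicit polynomial is not carried out. Your fallback is not valid as stated. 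Concavity of $D(p)=\det(xI-Q_1)$ makes $\{D>0\}$ an interval, but $D\le 0$ at both endpoints of the feasible $p$-interval does not rule out $D>0$ strictly inside it. Moreover, $\lambda_1<x$ also requires $\operatorname{tr}Q_1<2x$, not just $D>0$. To use concavity as in Lemma~\ref{lem:3.2}, you would need the sign of $\partial D/\partial p$ at the left endpoint, which you have not computed.

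The missing idea is strict convexity of $\lambda_1$ itself in $p$. Put $q=sr-p$ and $\alpha_i=\frac1{m_i}+\frac1s$. Then $a_1+a_2$ and $a_1-a_2$ are affine in $p$, and $\Delta(p)=(a_1-a_2)^2+4pq/s^2$ is a quadratic in $p$. Its leading coefficient $(\frac1{m_1}+\frac1{m_2})(\frac1{m_1}+\frac1{m_2}+\frac4s)$ is positive. Its discriminant $-\frac{16r^2}{s^2}(s^2\alpha_1\alpha_2-1)$ is negative, since $s^2\alpha_1\alpha_2=(1+\frac s{m_1})(1+\frac s{m_2})>1$. Hence $\sqrt{\Delta}$, and therefore $\lambda_1$, is strictly convex in $p$. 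At $p=sr/2$ a short computation gives $\partial\lambda_1/\partial p=\frac{\alpha_1-\alpha_2}{2}\bigl(\frac{(\alpha_1+\alpha_2)sr}{4R}-1\bigr)$, where $R$ is the square-root term in~\eqref{eq:odd-eig}. When $m_1<m_2$ this is positive, again because $s^2\alpha_1\alpha_2>1$. Every feasible $p$ satisfies $p\ge g(m_1)\ge l(r-l+1)>sr/2$, and the derivative of a convex function is increasing. So $\lambda_1$ is increasing on the whole feasible interval, and its minimum there is at $p=g(m_1)$. Only after this step do Lemma~\ref{lem:5.1} (minimise $m_2$) and Lemma~\ref{lem:5.2} (decrease in $m_1$) finish the argument as you describe. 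No polynomial-positivity computation is needed for this reduction.
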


\begin{proof}
By the compactness argument above, $\lambda_1(Q_1)$ attains its minimum over the relaxed region~\eqref{eq:odd-compact}. Since $q=sr-p$ throughout this region, we first study the dependence on $p$ for fixed $m_1$ and $m_2$.

Put $q=sr-p$ and $\alpha_i=\frac1{m_i}+\frac1s$. Then both $a_1-a_2=sr\,\alpha_2-(\alpha_1+\alpha_2)p$ and $\tfrac12(a_1+a_2)$ are affine functions of $p$. The expression $\Delta(p):=(a_1-a_2)^2+\frac{4pq}{s^2}$ is a quadratic polynomial in $p$ with leading coefficient
$(\alpha_1+\alpha_2)^2-\tfrac4{s^2}=\bigl(\tfrac1{m_1}+\tfrac1{m_2}\bigr)\bigl(\tfrac1{m_1}+\tfrac1{m_2}+\tfrac4s\bigr)>0$. Moreover, $\operatorname{disc}_p\Delta=-\frac{16r^2}{s^2}\bigl(s^2\alpha_1\alpha_2-1\bigr)<0$, because $s^2\alpha_1\alpha_2=\left(1+\frac{s}{m_1}\right)\left(1+\frac{s}{m_2}\right)>1$.
Thus $\Delta(p)>0$ and $\frac{d^2}{dp^2}\sqrt{\Delta(p)}=-\frac{\operatorname{disc}_p\Delta}{4\Delta(p)^{3/2}}>0$. It follows from~\eqref{eq:odd-eig} that $\lambda_1(Q_1)$ is strictly convex in $p$. Consequently, on every feasible interval, the minimum is attained either at an endpoint or at the unique stationary point, if one exists. We now consider two cases.

\textbf{Case 1.} $m_1=m_2=m$. 

In this case, $Q_1$ is invariant under $(p,q)\mapsto(q,p)$. Hence the convex function $\lambda_1(Q_1)$ is symmetric about $p=sr/2$ and attains its minimum there. For $p=q=sr/2$, the diagonal entries of $Q_1$ are $r-\tfrac{sr}{2m}-\tfrac r2$, and the off-diagonal entries are $r/2$. Therefore, $\lambda_1(Q_1)=r-\tfrac{sr}{2m}$. This value is increasing in $m$, so it remains to determine the smallest feasible common size. The point $p=sr/2$ is feasible if and only if $g(m)\leq sr/2$. Now $g(r-l+1)-\tfrac{sr}{2}=\tfrac r2-l(l-1)>0$, while $\tfrac{sr}{2}-g(r-l+2)=\tfrac r2+(l-1)(l-2)>0$; both inequalities follow from $r>(2l-1)(2l-2)$. Hence the smallest feasible common size is $m=r-l+2$, and the corresponding value is $\lambda_1(Q_1)=r-\tfrac{sr}{2(r-l+2)}=\xi(r,l)$ by~\eqref{eq:xi-nu-def}.

\textbf{Case 2.} $m_1\ne m_2$. 

Suppose first that $m_1,m_2\geq r-l+2$. By Lemma~\ref{lem:5.1}, replacing both $m_1$ and $m_2$ by $r-l+2$, while keeping $p$ and $q$ fixed, does not increase $\lambda_1(Q_1)$. The new parameter tuple is used only for comparison and need not be feasible. For $m_1= m_2$, the convexity and symmetry argument in Case~1 shows that the value at any $p\in[0,sr]$ is at least its value at $p=q=sr/2$, namely $\xi(r,l)$. Thus $\lambda_1(Q_1)\geq \xi(r,l)$.

It remains to consider the case in which the smaller part, say $V_1$, has $m_1\leq r-l+1$. Then $g(m_1)\geq g(r-l+1)=l(r-l+1)>sr/2$. Since $\lambda_1(Q_1)$ is strictly convex in $p$, its derivative is strictly increasing. We claim that this derivative is already positive at $p=sr/2$. Let $R$ denote the square-root term in~\eqref{eq:odd-eig}. At $p=q=sr/2$, differentiation gives
\[
\frac{\partial\lambda_1(Q_1)}{\partial p}\Big|_{p=sr/2}
=\frac{\alpha_1-\alpha_2}{2}\left(\frac{(\alpha_1+\alpha_2)sr}{4R}-1\right).
\]
Since $m_1<m_2$, we have $\alpha_1>\alpha_2$. The expression in parentheses is positive if $(\alpha_1+\alpha_2)sr>4R$. Using $4R=r\sqrt{s^2(\alpha_1-\alpha_2)^2+4}$ and squaring both sides, this inequality is equivalent to $s^2\alpha_1\alpha_2>1$. The latter holds because $s^2\alpha_1\alpha_2=(1+\tfrac{s}{m_1})(1+\tfrac{s}{m_2})>1$. Therefore, $\partial\lambda_1(Q_1)/\partial p>0$ at $p=sr/2$.

Every feasible value of $p$ satisfies $p\geq g(m_1)>sr/2$. Since the derivative is strictly increasing, $\lambda_1(Q_1)$ is strictly increasing throughout the feasible interval $[g(m_1),sr-g(m_2)]$. Hence the minimum is attained at $p=g(m_1)$. By Lemma~\ref{lem:5.2}, the smallest feasible size of $V_2$ is $2r-2l+3-m_1$, and $\phi(m_1)$ is strictly decreasing. Thus the minimum is attained at $m_1=r-l+1$, which gives configuration~\textup{(ii)}.

In configuration~\textup{(ii)}, $p=g(r-l+1)=l(r-l+1)$, and hence $q=sr-p=(l-1)(r+l)$. Substituting these values, together with $m_1=r-l+1$, $m_2=r-l+2$ and $s=2l-1$, into~\eqref{eq:odd-Q1}, we obtain the diagonal entries of $Q_1$:
\[
a_1=r-l-\frac{l(r-l+1)}{2l-1},\quad
a_2=r-\frac{(l-1)(r+l)}{r-l+2}-\frac{(l-1)(r+l)}{2l-1}.
\]
The off-diagonal entries of $Q_1$ are $q/s$ and $p/s$. Since $l(r-l+1)+(l-1)(r+l)=(2l-1)r$, the trace $a_1+a_2$ simplifies to $\tau(r,l)$ as defined in \eqref{eq:tau}, and the discriminant is
\begin{equation}\label{eq:odd-discriminant}
\tau^2-4\det Q_1
 =\frac{(r+l)(r+1)\,W(r,l)}{(2l-1)(r-l+2)^2}.
\end{equation}
It follows from~\eqref{eq:xi-nu-def} and~\eqref{eq:odd-discriminant} that $\lambda_1(Q_1)=\nu(r,l)$ in configuration~\textup{(ii)}. These two cases exhaust all possible minima, and the result follows.
\end{proof}

\subsection{Proof of Theorem~\ref{thm:1.3}}
\label{subsec:odd-closed}

\begin{lem}\label{lem:5.3}
Fix integers $t\geq2$ and $r>(2t-1)(2t-2)$. Then both $\xi(r,l)$ and $\nu(r,l)$ are strictly decreasing in $l$ for real $l\in[2,t]$. Hence so is $\min\bigl\{\xi(r,l),\nu(r,l)\bigr\}$, and therefore
\begin{equation}\label{eq:odd-collapse}
	\min_{2\leq l\leq t}\min\bigl\{\xi(r,l),\nu(r,l)\bigr\}=\min\bigl\{\xi(r,t),\nu(r,t)\bigr\}.
\end{equation}
\end{lem}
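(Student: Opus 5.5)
For $\xi$ we differentiate directly. With $\xi(r,l)=\frac{r(2r-4l+5)}{2(r-l+2)}$, the quotient rule gives
\[
\frac{\partial\xi}{\partial l}=\frac{r\bigl[-4(r-l+2)+(2r-4l+5)\bigr]}{2(r-l+2)^2}=\frac{r(-2r+3)}{2(r-l+2)^2}<0,
\]
since $r\geq3$. So $\xi$ is strictly decreasing in $l$ on $[2,t]$.

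For $\nu$ we use the characterisation from Lemma~\ref{lem:cl1}: $\nu(r,l)$ is the larger eigenvalue of the $2\times2$ matrix $Q_1$ in configuration~(ii), with parameters $m_1=r-l+1$, $m_2=r-l+2$, $p=l(r-l+1)$, $q=(l-1)(r+l)$ and $s=2l-1$. We then argue through the characteristic polynomial. Write $\chi_l(x)=x^2-\tau(r,l)x+\det Q_1(l)$, where $\nu(r,l)$ is the larger root of $\chi_l$. Implicit differentiation gives
\[
\frac{\partial\nu}{\partial l}=-\frac{\partial_l\chi_l(\nu)}{\partial_x\chi_l(\nu)}.
\]
The denominator $\partial_x\chi_l(\nu)=\sqrt{\tau^2-4\det Q_1}$ is positive by \eqref{eq:W-positive} and \eqref{eq:odd-discriminant}. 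It therefore suffices to show
\[
\partial_l\chi_l(x)=-\tau_l'\,x+(\det Q_1)_l'>0 \quad\text{at } x=\nu .
\]
Since this expression is affine in $x$, it is enough to check it at the two ends of an interval known to contain $\nu$. We can take $r-l-1\leq\nu<r$, or a sharper interval obtained by comparing $\nu$ with $\xi$.

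A cleaner alternative is to differentiate the closed form \eqref{eq:xi-nu-def} directly. One needs
\[
\tau_l'+\frac{\partial}{\partial l}\sqrt{\frac{(r+l)(r+1)W(r,l)}{(2l-1)(r-l+2)^2}}<0.
\]
Here $\tau_l'=-1-\frac{d}{dl}\frac{(l-1)(r+l)}{r-l+2}$, which is approximately $-2$ for large $r$. The radical behaves like $r+O(l)$, with $l$-derivative of order $O(1)$ but smaller than $2$ in magnitude. To make this rigorous, we would square the resulting inequality after isolating the radical's derivative. That reduces it to a polynomial inequality in $r,l$. We would then substitute $u=l-1$ and $x=r-2l(l-1)$, as in \eqref{eq:W-positive}, and show that all coefficients are positive.

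The main obstacle is this polynomial positivity check for $\nu$. The expression has high degree, and some cross terms may have negative coefficients. If they do, we would first try bounding $\nu$ between $\tau/2+$ a simple rational function and proceed via the affine-in-$x$ criterion above. Only as a fallback would we use the full closed form.

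The final step is the same in either approach. Once both $\xi(\cdot)$ and $\nu(\cdot)$ are strictly decreasing on $[2,t]$, their pointwise minimum is strictly decreasing too. Hence the minimum over $2\le l\le t$ is attained at $l=t$, which is exactly \eqref{eq:odd-collapse}. We should also note that $r>(2t-1)(2t-2)\geq(2l-1)(2l-2)$. This guarantees that $W(r,l)>0$ and that all denominators are positive throughout the interval, so the derivatives are well defined.
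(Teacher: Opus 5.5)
Your treatment of $\xi$ and of the final step is fine. For $\nu$ your framework is exactly the paper's: implicit differentiation of $\nu^2-\tau\nu+D=0$ with $D=\det Q_1$ in configuration (ii), positivity of $2\nu-\tau=\sqrt{\tau^2-4D}$ from \eqref{eq:W-positive} and \eqref{eq:odd-discriminant}, and reduction to showing $\tau'\nu-D'<0$. However, the proposal stops exactly at the step that carries all the content, so the strict decrease of $\nu$ is not established. Your concrete suggestions for closing it also do not work as stated:
\begin{itemize}
\item Comparing with $\xi$ cannot supply an enclosure. By Lemma~\ref{pro:5.4}, $\nu\ge\xi$ holds only when $B(r,l)\ge0$, and $B$ is negative in part of the admissible range. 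For example, $B(7,2)=-315$, where $\nu\approx5.43<\xi=5.5$.
\item The bound $\nu\ge r-l-1$ appears to hold numerically, but you give no proof. It would need its own polynomial inequality $\chi_l(r-l-1)\le0$.
\item Checking both ends of an interval is unnecessary. Since $\tau'=-2(r+1)^2/(r-l+2)^2<0$, the affine map $x\mapsto-\tau'x+D'$ is increasing, so only a lower bound for $\nu$ matters.
\end{itemize}

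The paper closes the gap with the cheapest lower bound available, $\nu>\tau/2$. This gives $\tau'\nu-D'<\tfrac12\tau\tau'-D'=-C(r,l)$. The paper then shows $C=\Pi/\bigl((2l-1)^2(r-l+2)^3\bigr)$, where $\Pi$ has only positive coefficients when written in $y=r-2l(l-1)-1>0$ and $u=l-1\ge1$. This check is not a formality. At leading order $\tfrac12\tau\tau'\approx-r$ and $D'\approx-r+r/(2l-1)^2$, so $C\approx r/(2l-1)^2$ survives only after a near-cancellation. Heuristics such as your estimate of the radical's $l$-derivative cannot replace it. Your $r-l-1$ route would leave much more slack in the final inequality, but only after you first prove the enclosure. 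Either way, an explicit polynomial positivity argument has to be carried out.

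A minor slip: in $\partial\xi/\partial l$ the bracket is $-4(r-l+2)+(2r-4l+5)=-2r-3$, not $-2r+3$. Hence $\partial_l\xi=-r(2r+3)/\bigl(2(r-l+2)^2\bigr)<0$ with no condition on $r$.
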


\begin{proof}
By differentiating~\eqref{eq:xi-nu-def}, we obtain $\frac{d\xi}{dl}=-\frac{r(2r+3)}{2(r-l+2)^2}<0$ and hence $\xi(r,l)$ is strictly decreasing in $l$.  
	
For configuration~\textup{(ii)} in Lemma \ref{lem:cl1}, let $D(r,l)=\det Q_1$, and extend $\tau(r,l)$, $D(r,l)$ and $\nu(r,l)$ to real $l\in[2,t]$. Since $r>(2t-1)(2t-2)$, we have $r>2l(l-1)+1$ for every $l\in[2,t]$.

We first show that $\tau^2-4D>0$. The positivity statement in~\eqref{eq:W-positive} applies for every $l\in[2,t]$ and gives $W(r,l)>0$. Since the remaining factor in~\eqref{eq:odd-discriminant} is positive, we have $\tau^2-4D>0$. Equations~\eqref{eq:xi-nu-def} and~\eqref{eq:odd-discriminant} show that $\nu$ is the larger root of $z^2-\tau z+D=0$. Implicit differentiation of $\nu^2-\tau\nu+D=0$ gives
$$
\frac{d\nu}{dl}=\frac{\frac{d\tau}{dl}\nu-\frac{dD}{dl}}{2\nu-\tau}.
$$
Since $\frac{d\tau}{dl}=-2(r+1)^2/(r-l+2)^2<0$ and $\nu>\tau/2$, we have
$\frac{d\tau}{dl}\,\nu-\frac{dD}{dl}
<\frac12\,\frac{d\tau}{dl}\,\tau-\frac{dD}{dl} :=-C(r,l)$.
Substituting the explicit formulas for $\tau$ and $D$, differentiating, and collecting over the common positive denominator gives $C(r,l)=\frac{\Pi(r,l)}{(2l-1)^2(r-l+2)^3}$, where, for $y=r-2l(l-1)-1>0$,
\begin{align*}
\Pi(r,l)=\;&y^4+(12l^2-15l+11)y^3
+2(18l^4-41l^3+63l^2-48l+20)y^2\\
&+2(16l^6-48l^5+114l^4-161l^3+159l^2-93l+29)y\\
&+4(l^2-l+1)(4l^5-11l^3+23l^2-19l+7).
\end{align*}
Substituting $l=u+1$, where $u\geq1$, we obtain
\begin{align*}
\Pi(r,l)=\;&y^4+(12u^2+9u+8)y^3\\
&+(36u^4+62u^3+96u^2+54u+24)y^2\\
&+(32u^6+96u^5+228u^4+270u^3+240u^2+108u+32)y\\
&+16u^7+96u^6+212u^5+316u^4+292u^3+192u^2+72u+16.
\end{align*}
All coefficients in this expansion are positive. Hence $\Pi>0$ and $C>0$. Therefore, $\frac{d\nu}{dl}<0$, as required.
\end{proof}

We now compare the two candidate values in Lemma \ref{lem:cl1}.

\begin{lem}\label{pro:5.4}
Let $l\geq2$ and $r>(2l-1)(2l-2)$ be integers. Set
\begin{equation}\label{eq:Bdef}
B(r,l)= 2r^3+(3+12l-12l^2)r^2+(8l^4-16l^3-8l^2+18l)r
 +(8l^4-24l^3+16l^2).
\end{equation}
Then $\nu(r,l)<\xi(r,l)$ if and only if $B(r,l)<0$. Moreover, if $r\geq6l^2$, then $B(r,l)>0$ and consequently $\nu(r,l)\geq \xi(r,l)$.
\end{lem}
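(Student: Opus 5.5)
The plan is to compare $\nu$ and $\xi$ through the quadratic whose larger root is $\nu$. By the proof of Lemma~\ref{lem:cl1}, $\nu=\nu(r,l)$ is the larger root of $\chi(z)=z^2-\tau z+D$, where $\tau=\tau(r,l)$ and $D=D(r,l)=\det Q_1$ in configuration~(ii). The other root is $\tau-\nu<\tau/2$.

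I will check that $\xi>\tau/2$. Since $\xi>r-l$ (by the formula $\xi-(r-l)=\frac{r-2l(l-2)}{2(r-l+2)}$, which is positive here) and $\tau<r-l$, this holds. Hence $\xi$ lies to the right of the smaller root, and
\[
\nu<\xi\iff \chi(\xi)>0 .
\]
So the first step is to compute $\chi(\xi)=\xi^2-\tau\xi+D$ explicitly from \eqref{eq:xi-nu-def}, \eqref{eq:tau} and the entries of $Q_1$ in configuration~(ii). These entries are
\[
a_1=r-l-\tfrac{l(r-l+1)}{2l-1},\qquad a_2=r-\tfrac{(l-1)(r+l)}{r-l+2}-\tfrac{(l-1)(r+l)}{2l-1},
\]
with off-diagonal product $l(l-1)(r-l+1)(r+l)/(2l-1)^2$. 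Clearing the positive denominator, which is a product of powers of $2(r-l+2)$ and $(2l-1)$, I expect
\[
\chi(\xi)=-\frac{c\,(\text{positive factors})\,B(r,l)}{4(2l-1)^{e}(r-l+2)^2}
\]
for some positive constant $c$. Possibly a positive factor such as $(r+1)$ or $(l-1)$ also splits off. The outcome should be that $\chi(\xi)>0$ holds exactly when $B(r,l)<0$, which gives the equivalence.

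This algebraic factorisation is the main obstacle. I would do it by writing everything over the common denominator $4(2l-1)^2(r-l+2)^2$ and collecting terms in $r$. The key is to identify the leading coefficient and check that the resulting cubic in $r$ is a positive multiple of $B$.

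For the second assertion, assume $r\ge 6l^2$ and set $r=6l^2+x$ with $x\ge0$. Then $B$ becomes a cubic in $x$ with leading coefficient $2$. I will show that all its coefficients are positive, viewing them as polynomials in $l\ge2$, possibly after substituting $l=u+2$ as done elsewhere in the paper. The coefficients come out as follows:
\begin{itemize}
\item the $x^2$ coefficient is $36l^2-12l^2+12l+3=24l^2+12l+3>0$;
\item the $x$ coefficient is $6\cdot36l^4+2(3+12l-12l^2)6l^2+8l^4-\cdots$, whose dominant term is $80l^4$, so it is positive;
\item the constant term is $B(6l^2,l)=432l^6+36l^4(3+12l-12l^2)+\cdots$, whose top-degree terms cancel down to about $8l^6$ plus positive lower terms. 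This needs a careful check with $l=u+2$.
\end{itemize}
Alternatively, one can show that $B(\cdot,l)$ is increasing for $r\ge6l^2$ via $\partial B/\partial r=6r^2+2(3+12l-12l^2)r+\cdots>0$, and that $B(6l^2,l)>0$. Then $B>0$ gives $\chi(\xi)\le0$, hence $\nu\ge\xi$.
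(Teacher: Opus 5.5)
Your proposal is correct and is essentially the paper's proof: the paper also reduces $\nu<\xi$ to $\operatorname{tr}Q_1<2\xi$ (checked there via the exact value $2\xi-\operatorname{tr}Q_1=\frac{r(r-l+2)+l}{r-l+2}$ rather than your bound $\xi>r-l>\tau$) together with $\det(\xi I-Q_1)>0$; the determinant comes out as exactly $-B(r,l)/\bigl(4(2l-1)(r-l+2)^2\bigr)$ with no extra factors; and the case $r\geq 6l^2$ is handled through $B(6l^2,l)>0$ and $\partial B/\partial r>0$ for $r\ge 6l^2$, which is equivalent to your expansion in $x=r-6l^2$. One minor slip: $B(6l^2,l)=48l^6+336l^5+68l^4+84l^3+16l^2$, not about $8l^6$ at the top, because you dropped the $6l^2\cdot 8l^4$ contribution; in any case all four coefficients of your cubic in $x$, namely $2$, $24l^2+12l+3$, $80l^4+128l^3+28l^2+18l$ and this constant term, are positive, so your conclusion stands.
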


\begin{proof}
In configuration~\textup{(ii)} of Lemma \ref{lem:cl1}, the matrix $Q_1$ has two real eigenvalues, of which the larger one is $\nu(r,l)$. Therefore, $\nu(r,l)<\xi(r,l)$ if and only if both eigenvalues of $Q_1$ are less than $\xi(r,l)$, or equivalently  $\det(\xi(r,l)I-Q_1)>0$ and $\operatorname{tr}(Q_1)<2\xi(r,l)$.

Since $2\xi(r,l)-\operatorname{tr}(Q_1)=\frac{r^2-(l-2)r+l}{r-l+2}=\frac{r(r-l+2)+l}{r-l+2}>0$,we always have $\operatorname{tr}(Q_1)<2\xi(r,l)$. Moreover, expanding the $2\times2$ determinant and collecting terms gives $\det(\xi(r,l)I-Q_1)=-\frac{B(r,l)}{4(2l-1)(r-l+2)^2}$. Since the denominator is positive, $\det(\xi(r,l)I-Q_1)>0$ if and only if $B(r,l)<0$. This proves the first assertion.

We now prove the second assertion. Direct substitution gives $B(6l^2,l)=4l^2\bigl(12l^4+84l^3+17l^2+21l+4\bigr)>0$. Write $r=6l^2+x$, where $x\geq0$. Then
\[
\frac{\partial B}{\partial r}(6l^2+x,l)
=2\bigl(40l^4+64l^3+14l^2+9l+(24l^2+12l+3)x+3x^2\bigr)>0.
\]
Thus $B$ is strictly increasing with $r$ for $r\geq6l^2$. Since $B(6l^2,l)>0$, we conclude that $B(r,l)>0$ for all $r\geq6l^2$.
\end{proof}

\begin{proof}[Proof of Theorem~\ref{thm:1.3}]
Let $t\geq 2$ and $r>(2t-1)(2t-2)$ be integers, and let $G$ be a connected $r$-regular graph with $\kappa(G)=2l-1\leq2t-1$.
Let $S$ be a minimum vertex cut of $G$, and write $|S|=\kappa(G)=2l-1\leq2t-1$. The case $l=1$ was proved at the beginning of this section, so assume that $2\leq l\leq t$.

By the reduction preceding~\eqref{eq:odd-compact}, Lemma \ref{lem:cl1} and~\eqref{eq:odd-interlace}, we have $\lambda_2(G)\geq\lambda_1(Q_1)\geq\min\bigl\{\xi(r,l),\nu(r,l)\bigr\}$. Since $r>(2t-1)(2t-2)$, Lemma \ref{lem:5.3} applies. As $l\leq t$,
\[
\min\bigl\{\xi(r,l),\nu(r,l)\bigr\}\geq\min_{2\leq l'\leq t}\min\bigl\{\xi(r,l'),\nu(r,l')\bigr\}=\min\bigl\{\xi(r,t),\nu(r,t)\bigr\}.
\]
Thus~\eqref{eq:odd-bound} holds. 

Now suppose that $r\geq6t^2$. Then Lemma~\ref{pro:5.4} gives $B(r,t)>0$ and hence $\nu(r,t)\geq \xi(r,t)$. Therefore, the bound \eqref{eq:odd-bound} reduces to~\eqref{eq:odd-bounds}.
\end{proof}

\medskip
\noindent \textbf{Acknowledgements}~~The first author was supported by China Scholarship Council and Autonomous Region Graduate Research Innovation Project (No. XJ2026G012).

\end{document}